\documentclass[a4paper,11pt,reqno]{amsart}

\usepackage[utf8]{inputenc}
\usepackage[english]{babel}
\usepackage{amsmath}
\usepackage{amsfonts}
\usepackage{amsthm}
\usepackage{mathtools}
\usepackage{float}
\usepackage{stmaryrd}
\usepackage{graphics}
\usepackage{graphicx}
\usepackage{subfig}
\usepackage{enumitem}
\usepackage{datetime}
\usepackage{bm}
\usepackage[colorlinks=false]{hyperref}
\usepackage{color}

\newcommand{\mc}{\mathcal}

\newcommand{\eps}{\varepsilon}
\renewcommand{\d}{\,\mathrm{d}}
\renewcommand{\div}{\mathrm{div}}
\newcommand{\PP}{\mathcal{P}}
\renewcommand{\AA}{\mathcal{A}}
\newcommand{\CC}{\mathcal{C}}

\def\R{\mathbb{R}}
\renewcommand{\to}{\rightarrow}

\numberwithin{equation}{section}
\newtheorem{thm}{Theorem}[section]
\newtheorem{defi}[thm]{Definition}
\newtheorem{prop}[thm]{Proposition}
\newtheorem{lemma}[thm]{Lemma}
\newtheorem{cor}[thm]{Corollary}

\theoremstyle{definition}
\begingroup
\newtheorem{rmk}[thm]{Remark}

\endgroup

\theoremstyle{remark}

\oddsidemargin 5pt
\evensidemargin 5pt
\textwidth16.cm
\textheight22.5cm
\flushbottom

\begin{document}
	\author[F. Riva and E. Rocca]{Filippo Riva and Elisabetta Rocca}
	
	\title[Sharp interface limit for tumor growth models]{A rigorous approach to the sharp interface limit for phase-field models of tumor growth}
	
	\begin{abstract}
		In this paper we consider two diffuse interface models for tumor growth coupling a Cahn-Hilliard type equation for the tumor phase parameter to a reaction-diffusion type equation for the nutrient. The models are distinguished by the presence of two different coupling source terms. For such problems, we address the question of the limit, as the diffuse interface parameter tends to zero, from diffuse interface models to sharp interface ones, justifying rigorously what was deduced via formal asymptotics in \cite{GarLamSitSty}. The resulting evolutions turn out to be varifold solutions to Mullins-Sekerka type flows for the tumor region suitably coupled with the equation for the nutrient. 
	\end{abstract}
	
	\maketitle
	
	{\small
		\keywords{\noindent {\bf Keywords:} Cahn-Hilliard equation, reaction-diffusion equation, sharp interface, diffuse interface, asymptotics, tumor growth, varifolds.	
		}
		
		\par
		\subjclass{\noindent {\bf 2020 MSC:} 35B25, 35G50, 35Q92, 35R35, 92C50.
			
		}
	}

	\pagenumbering{arabic}
	
	\medskip
	
	\tableofcontents
	
	\section*{Introduction}
	
	The problem of modelling tumor growth dynamics has recently become a major issue in applied mathematics (see~\cite{CL10,WLFC08}). 
	The morphological evolution of a growing solid tumor is the result of the dynamics of a complex system that includes many nonlinearly interacting factors, including cell-cell and cell-matrix adhesion, mechanical stress, cell motility and angiogenesis just to name a few. Numerous mathematical models have been developed to study various aspects of tumor progression and this has been an area of intense research interest (see the recent reviews \cite{BLM08,CEtAl08, FBG06, FBM07, GP07, LEtAl10}). 
	
	The existing models can be divided into two main categories: continuum models and discrete models. We concentrate on the former ones. There the necessity of dealing with multiple interacting constituents has led to the consideration of diffuse-interface models based on continuum mixture theory (see, for instance, \cite{CLW09} and references therein). 
	In the diffuse approach, sharp interfaces between different species are replaced by narrow transition layers, of small thickness $\varepsilon$, that arise due to differential adhesive
	forces among the cell-species. The main advantages of the diffuse interface formulation are: 
	- it eliminates the need to enforce complicated boundary conditions across the tumor/host tissue and other species/species interfaces that would have to be satisfied if the interfaces were assumed sharp, and
	- it eliminates the need to explicitly track the position of interfaces, as is required in the sharp interface framework. In this approach the tumor concentration $\varphi$ (representing the local proportion of the tumor phase) varies in the interval $[-1,1]$, with the convention that $\varphi\equiv 1$ means that we are in the tumor phase, $\varphi\equiv -1$ means that we are in the healthy phase, while $\varphi\in (-1,1)$ in the diffuse interface between the pure phases. 
	
	Such models generally consist of Cahn-Hilliard equations with transport and reaction terms which govern the tumour concentration parameter $\varphi$. The reaction terms depend on the nutrient concentration $\sigma$ (e.g., oxygen) which obeys an advection-reaction-diffusion equation. In the simplest case of a two-phase model (where we only consider one tumor phase and the healthy phase) and neglecting velocities, the resulting PDE system turns out to be of this type 
	\begin{equation}\label{diffuse}
		\begin{aligned}
			&\dot\varphi-\Delta\mu=R(\varphi,\sigma,\mu), \quad \mu =-\varepsilon \Delta\varphi+\frac 1\varepsilon F'(\varphi),\\		
			&\dot\sigma-\Delta\sigma=S(\varphi,\sigma,\mu),
		\end{aligned}
	\end{equation}
	where $F$ denotes the typical double-well potential associated with the Ginzburg-Landau free-energy functional: 
	\begin{equation}\label{double}
		F(u)=\frac 14(1-u^2)^2.
	\end{equation}
	In \eqref{diffuse}, $R$ and $S$ are specific source terms governing the proliferation and the death of tumor cells, as well as the consumption of the nutrient by the tumour. Different choices for $R$ and $S$ are possible. 
	In \cite{FriGrasRoc, HZO11} linear phenomenological laws for chemical reactions, like
	\begin{align}\label{lin}
		R = -S = P(\varphi)(\sigma - \mu),
	\end{align}
	are considered. Here, $P$ can be regarded as a proliferation function; for instance, it may have the
	form ${P}(u)=P_0(1-u^2)\chi_{[-1,1]}(u)$ for $u\in\mathbb{R}$, $P_0>0$.
	Otherwise, in  \cite{CWSL14} and \cite{GarLamSitSty} they consider the following choice: 
	\begin{equation}\label{phe}
		R = (\PP \sigma - \AA)H(\varphi), \qquad S = -\CC \sigma H(\varphi),
	\end{equation}
	where $H$ is an interpolation function such that $H(-1)= 0$ and $H(1) = 1$, while the terms $H(\varphi)\PP \sigma$, $H(\varphi) \AA$ and $H(\varphi) \CC \sigma$ model, respectively, the proliferation of tumor cells proportional to nutrient concentration, apoptosis of tumor cells, and consumption of nutrient by the tumor cells.
	
	More sophisticated models include the cell velocities satisfying a generalized Darcy's (or Brinkman's) law where, besides the pressure gradient, it also appears the so-called Korteweg force due to cell concentration or/and multiphase order parameters differentiating, e.g., between necrotic and proliferating tumors (cf., e.g., \cite{FLRS, KS}).\medskip  
	
	While there exist quite a number of numerical simulations of diffuse-interface models of tumor growth (cf., e. g., \cite[Ch.~8]{CL10}, \cite{CLW09, HZO11, WLFC08}), there are less contributions to the mathematical analysis of the models. The first contributions in this direction dealt with the case where the nutrient is neglected, which then leads to the so-called Cahn-Hilliard-Hele-Shaw system (see, e.g., \cite{GRS, JWZ15}). Later on, in  \cite{FriGrasRoc}, the model \eqref{diffuse} including the evolution of the nutrient proportion in case \eqref{lin} (introduced in \cite{HZO11}) is taken into account and rigorously analyzed concerning well-posedness, regularity, and asymptotic behavior. We also refer to the papers \cite{CGRS1, CGRS2}, in which various viscous approximations of the system \eqref{diffuse} have been studied analytically and to \cite{CavRocWu} where optimal control and long-time behaviour of solutions were investigated. Regarding instead the well-posedness of system \eqref{diffuse}-\eqref{phe}, we can quote, e.g.,  the papers \cite{GarLam} as well as \cite{GarLamRoc} and \cite{MRS} where also the optimal control problem and the long-time behavior of solutions in terms of attractors have been tackled. 
	
	Furthermore, although the diffuse-interface approach has proved to be a powerful way to describe tumor growth evolution, a major and challenging issue, still open in the general case, consists in the analytical validation of the transition from diffuse to sharp interfaces (i.e., the limit $\varepsilon\searrow 0$ of \eqref{diffuse}). This will be the subject of the present paper. In the literature some formal results regarding passages to the sharp 
	interface limit are available (cf., e.g., \cite{GarLamSitSty, Hil}), but, up to our knowledge, no rigorous theorems are 
	proved for coupled systems as \eqref{diffuse}. Indeed, in the papers \cite{DFRS15, MelRoc} and \cite{RocSca} the authors investigated the existence of weak solutions and some rigorous sharp interface limit in 
	two simplified cases. In \cite{DFRS15, MelRoc} only the coupling between the Cahn--Hilliard equation 
	and the Darcy law for the velocities is considered and, in particular, in \cite{DFRS15} the physically meaningful case of a double-well potential in the Cahn--Hilliard equation cannot be accounted. In \cite{RocSca} instead, the coupling between the equations for $\varphi$ and $\sigma$ is 
	considered, but in a special and artificial case leading to a gradient flow structure: thanks to this property, the authors of~\cite{RocSca} can prove the convergence result with the tools of Gamma Convergence.
	
	There are, indeed, basically two tested methods in the literature related to the issue of performing a rigorous sharp interface limit, in the spirit of what is already known for the Cahn-Hilliard equation (cf., e.g., \cite{Chen,Le, T05} and references therein). The first approach consists in writing down the system as a Gradient Flow in order to use refined results of Gamma Convergence already exploited in \cite{S11}. This result, however, does not apply to models coupling phase and nutrient dynamics relevant in applications and, as previously mentioned, it has been obtained in \cite{RocSca} only for a toy model suitably adapted in order to possess a gradient flow structure. We also quote \cite{KroemLaux}, \cite{LauxStinson} and \cite{LauxSim} (see also \cite{HensStin}) where the same approach has been used for a Cahn-Hilliard equation with non-constant mobility, a model of lithium-ion batteries, and for the vectorial Allen-Cahn equation, respectively. A second possibility relies in considering a weak notion of solution for the sharp-interface problem, called varifold solution and introduced for the Cahn-Hilliard equation in \cite{Chen}. This second approach has been recently extended in \cite{MelRoc} to the case of a Cahn-Hilliard-Darcy system (first neglecting the nutrient) in the spirit of \cite{AL14}.
	
	We finally mention the very recent technique via relative entropy method, which for the moment is not available for Cahn-Hilliard-type equations but is just limited to the Allen-Cahn equation and its variants \cite{FLS, HensLauxentropy, HensLaux}. The same approach is also used in \cite{FHLS, Laux} for studying stability properties of mean curvature flows.\medskip
	
	In this paper we adopt the second method via varifold solutions and we address the problem of a rigorous sharp interface limit for the following Cahn-Hilliard-Nutrient systems presenting two different source terms.  
	We let $\Omega$ be a (sufficiently) smooth bounded domain in $\R^d$, with $d=2$ or $d=3$, and given a time horizon $T>0$ we set $Q:=(0,T)\times \Omega$. We also denote by $n$ the outward unit normal on the boundary $\partial\Omega$. Given a small parameter $\eps>0$, for $i=1,2$ the two diffuse interface problems we consider are given by
	\begin{equation}\label{eq:diffusemodel1}
		\begin{cases}
			\dot\varphi^\varepsilon-\Delta\mu^\varepsilon=R_i(\varphi^\eps,\sigma^\eps,\mu^\eps), \quad \mu^\varepsilon =-\varepsilon \Delta\varphi^\varepsilon+\frac 1\varepsilon F'(\varphi^\varepsilon),&\text{in }Q,\\
			\dot\sigma^\varepsilon-\Delta\sigma^\varepsilon=S_i(\varphi^\eps,\sigma^\eps,\mu^\eps),&\text{in }Q,\\
		\end{cases}
	\end{equation}
	where the potential $F$ is of double-well type (the prototypical example is given by \eqref{double}), while $R_i,S_i$ are specific coupling terms. Referring to the expressions \eqref{lin} and \eqref{phe}, we focus, for simplicity but without any loss of generality, our attention on the choices 
	\begin{subequations}
		\begin{equation}\label{eq:problemP}
			R_1(\varphi,\sigma,\mu)=-S_1(\varphi,\sigma,\mu)=P(\varphi)(\sigma-\mu),
		\end{equation}
		and
		\begin{equation}\label{eq:problemH}
			R_2(\varphi,\sigma,\mu)=(\sigma-1)H(\varphi),\qquad S_2(\varphi,\sigma,\mu)=-\sigma H(\varphi),
		\end{equation}
	\end{subequations}
	where $P$ is a so-called proliferation function and  $H$ is an interpolation function (cf.~\eqref{lin} and \eqref{phe}). Moreover, system \eqref{eq:diffusemodel1} is endowed with the following Neumann-homogeneous boundary conditions, ensuring no flux flows through the boundary of $\Omega$, and initial conditions:
	\begin{equation}\label{eq:diffusemodel2}
		\begin{cases}		\partial_n\varphi^\varepsilon=\partial_n\sigma^\varepsilon=\partial_n\mu^\varepsilon=0,&\text{ in }(0,T)\times \partial\Omega,\\		\varphi^\varepsilon(0)=\varphi^\varepsilon_0,\quad\sigma^\varepsilon(0)=\sigma^\varepsilon_0,&\text{ in }\Omega.
		\end{cases}
	\end{equation}
	
	Throughout the paper, we will refer to system \eqref{eq:diffusemodel1}--\eqref{eq:diffusemodel2} with choices \eqref{eq:problemP} and \eqref{eq:problemH} as Problem P and Problem H, respectively.\medskip
	
	The sharp interface limit, namely the limit as $\varepsilon\to 0$, of both Problems P and H has been formally obtained in \cite{GarLamSitSty} via matched asymptotic expansion, and it consists in the following Mullins-Sekerka type geometric flow
	
	\begin{equation}\label{eq:sharpinterfacemodel}
		\begin{cases}
			\varphi=1,&\text{in } Q^C,\\
			\varphi=-1,&\text{in } Q^H,\\
			-\Delta\mu=R_i(\varphi,\sigma,\mu), &\text{in } Q^C\cup Q^H,\\
			\dot\sigma-\Delta\sigma=S_i(\varphi,\sigma,\mu), &\text{in } Q^C\cup Q^H,\\
			\mu=\theta \kappa,&\text{in } \Gamma,\\
			0= \llbracket\sigma\rrbracket^C_H,&\text{in } \Gamma,\\ 
			\omega=-\frac 12 \llbracket\partial_n\mu\rrbracket^C_H,&\text{in } \Gamma,\\
			0= \llbracket\partial_n\sigma\rrbracket^C_H,&\text{in } \Gamma,\\        \partial_n\sigma=\partial_n\mu=0,&\text{in }(0,T)\times \partial\Omega,\\
			\varphi(0)=-1+2\chi_{\Omega_0},\quad \sigma(0)=\sigma_0,&\text{in }\Omega.
		\end{cases}
	\end{equation}
	In the above system, $Q^C$ and $Q^H$ are open subsets of $Q$ representing the cancerous and the healthy zone, respectively. The interface between $Q^C$ and $Q^H$ is denoted by $\Gamma$, whose slices $\Gamma_t$ have mean curvature $\kappa(t)$ and scalar normal velocity $\omega(t)$, with the convention that positive normal velocity points towards the healthy region. With a little abuse of notation we still use the symbol $n$ for the normal unit vector of $\Gamma_t$, pointing towards the healthy region. The constant $\theta$ is given by 
	\begin{equation}\label{eq:theta}
		\theta:=\int_{-1}^1\sqrt{\frac{F(u)}{2}}\d u,
	\end{equation}
	and, finally, the symbol $\llbracket\alpha\rrbracket^C_H$ stands for the jump through $\Gamma$ of the quantity $\alpha$ from $\Omega^C$ to $\Omega^H$. The initial data consist in the initial cancerous zone $\Omega_0\subseteq \Omega$ and in the initial nutrient concentration $\sigma_0$.
	
	We mention that more general models than \eqref{eq:sharpinterfacemodel} have been formally obtained in \cite{GarLamSitSty}, even including chemotaxis effects and active transport. However, the approach we follow in this paper strictly relies on the form of the chemical potential $\mu^\eps$ in \eqref{eq:diffusemodel1}. Hence the presence of chemotaxis, which directly affects $\mu^\eps$, can not be directly handled via our method, but probably requires other techniques. We leave this clearly important issue open for future investigations.

	In the present paper, we rigorously prove that solutions to Problems P and H converge, as $\eps$ goes to $0$, to varifold solutions \cite{Chen} of the sharp interface models \eqref{eq:sharpinterfacemodel}. However, we stress that in order to complete the argument for Problem H we need to add a technical assumption on the function $H$ (see \eqref{hyp:Htechnical}). Although this assumption still allows to include many nonlinearities, in particular it yields $H(1)=0$ and so it excludes many prototypical examples for which $H(1)=1$. In comparison with existing contributions, the main difficulty we encounter here relies on the fact that the uniform bounds on the chemical potential $\mu^\varepsilon$, that were derived in \cite{Chen} for the classical Cahn-Hilliard equation, cannot be directly applied due to the presence of mass sources in \eqref{eq:diffusemodel1}. This is the main reason why our result is not global in time anymore (cfr.~Theorems~\ref{thm:main}, \ref{thm:mainH}) like it was in \cite{Chen, MelRoc}, but it holds true only till when the interface is effectively present and we do not reach the pure phases. However, we think that this is the most interesting case to be studied and in this sense our result should not be regarded as a partial one.
	
	The two cases described by the different source terms $R_i$ and $S_i$ must be treated differently, basically because in case $i=1$ we can rely on an energy balance featuring a Lyapunov functional which is not available in case $i=2$. To overcome the issue, in this second case we carefully apply a Gr\"onwall type argument (this is where we need the technical assumption \eqref{hyp:Htechnical} on $H$) in order to obtain proper uniform bounds and conclude the argument.  
	\bigskip
	
	\noindent\textbf{Plan of the paper.} In Section~\ref{sec:setting} we first list all the assumptions we require for the sharp interface limit analysis. We then present the known well-posedness results for the diffuse interface problem \eqref{eq:diffusemodel1}--\eqref{eq:diffusemodel2} with choices \eqref{eq:problemP} and \eqref{eq:problemH} (Problems P and H), see Theorems~\ref{thm:existenceP} and \ref{thm:existenceH} respectively, and we introduce in Definition~\ref{def:varifoldsol} the weak notion of varifold solution for the sharp interface problem \eqref{eq:sharpinterfacemodel}. We finally state the main results of the paper regarding the convergence of solutions of Problems P and H to varifold solutions of \eqref{eq:sharpinterfacemodel} in Theorems~\ref{thm:main} and \ref{thm:mainH}. The subsequent Sections~\ref{sec:sharpinterfaceP} and \ref{sec:sharpinterfaceH} are devoted to the proof of such results. The strategy exploits energetic arguments to deduce uniform bounds on both the phase variable, the nutrient and the chemical potential, leading to suitable compactness properties. The validity of the sharp interface limit is finally obtained by a careful construction of a proper varifold.

	\section*{Notation and preliminaries}
	
	The maximum and the minimum of two real numbers $\alpha,\beta$ are denoted by $\alpha\vee\beta$ and $\alpha\wedge\beta$, respectively. The positive part of a function $f$ is $f^+:=f\vee0$.
	
	We adopt standard notations for Lebesgue, Sobolev and Bochner spaces. The norm of a Banach space $X$ will be denoted by $\|\cdot\|_X$; if $X$ is an Hilbert space, its scalar product will be written $(\cdot,\cdot)_X$. In the case $X=L^p(\Omega)$, we use the shortcuts $\|\cdot\|_p$ and $(\cdot,\cdot)_2$, if $p=2$. The duality product between $w\in X^*$ and $v\in X$ is represented by the symbol $\langle w,v\rangle_X$. Strong, weak and weak$^*$ convergence are denoted by $\xrightarrow{}$, $\xrightharpoonup[]{}$ and $\xrightharpoonup[]{\ast}$, respectively.
	
	With the symbol $B([a,b];X)$ we mean the space of everywhere defined functions $f\colon [a,b]\to X$ which are bounded in $X$, namely $\sup\limits_{t\in [a,b]}\|f(t)\|_X<+\infty$. The spaces of continuous functions, $\alpha$-H\"older continuous functions and functions of bounded variation from $[a,b]$ to $X$ are instead denoted by $C([a,b];X)$, $C^\alpha([a,b];X)$ and $BV([a,b];X)$, respectively. By $L^p_{\rm loc}([0,T);X)$ we denote the space of functions which belong to $L^p(0,t;X)$ for all $t\in (0,T)$. Finally, for an arbitrary subset $E$ of $\R^d$ (not necessarily open), the symbols $C_0(E)$ and $C_0(E; \R^d)$ stand for the spaces of scalar and vector-valued continuous functions defined in $E$ whose support is compact.
	
	The average of a function $f\colon \Omega\to \R$, where $\Omega$ is a measurable subset of $\R^d$, is denoted by $\displaystyle[f]:=\frac{1}{|\Omega|}\int_\Omega f\d x$.
	
	Given a set $A\subseteq (0,T)\times \Omega$, its slice with respect to time $t\in(0,T)$ will be indicated by $A_t:=\{x\in\Omega:\, (t,x)\in A\}$. Moreover, the Lebesgue measure in $[0,T]\times \R^d$ is denoted by $\d t\d x$.
	
	\bigskip
	
	\noindent\textbf{Varifolds.}
	A varifold $\mc V$ on an open set $\Omega\subseteq \R^d$ is a Radon measure on $\Omega\times \mc P$, where $\mc P$ denotes the set of unit normals of unoriented hyperplanes in $\R^d$, namely
	\begin{equation*}
		\mc P:=\mathbb{S}^{d-1}/\sim_{\mathbb S^{d-1}},
	\end{equation*}
	where $\sim_{\mathbb S^{d-1}}$ denotes the antipodal equivalence relation in $\mathbb{S}^{d-1}$.
	
	The first variation $\delta\mc V$ of a varifold $\mc V$ is the linear functional on $C^1_0(\Omega;\R^d)$ defined by
	\begin{equation}\label{eq:firstvar}
		\langle\delta\mc V, Y\rangle:=\int_{\Omega\times\mc P}\nabla Y(x):(I-p\otimes p)\d \mc V(x,p),\quad\text{for any }Y\in C^1_0(\Omega;\R^d).
	\end{equation}
	
	\section{Setting and main results}\label{sec:setting}
	The ambient space $\Omega$ is a bounded domain in $\R^d$, with $d=2$ or $d=3$, whose boundary $\partial \Omega$ is of class $C^{2,1}$. We recall that, given $T>0$, we set $Q:=(0,T)\times \Omega$. We require that the potential $F\in C^3(\R)$ is a nonnegative function vanishing only at the points $1$ and $-1$ and satisfying
	\begin{equation}\label{hyp:F}
		F''(u)\ge c|u|^{p-2}\,\,\text{ if }|u|\ge 1-\delta_0,\quad \text{for some }\delta_0\in (0,1)\text{ and } p\in [3,6).
	\end{equation}
	
	We assume that the proliferation function $P\in C^{0,1}_{\rm loc}(\R)$ appearing in \eqref{eq:problemP} is nonnegative and fulfils
	\begin{equation}\label{hyp:P}
		|P'(u)|\le C(1+|u|^{r-1}) \text{ for a.e. }u\in\R, \quad\text{for some }r\in [1,p-2].
	\end{equation}
	Examples of proliferation functions are $P(u)=\lambda_0 (1+u)^+$, see \cite{GarLamSitSty}, or $P(u)=(1-u^2)\vee \lambda_0(|u|-1)$, see \cite{FriGrasRoc}, where $\lambda_0>0$ is a positive parameter. 
	
	Notice that assumptions \eqref{hyp:F} and \eqref{hyp:P} imply the existence of constants $c_F, C_F, \overline{c}_F, C_P, \overline{C}_P>0$ such that
	\begin{subequations}
		\begin{align}
			& F(u)\ge c_F|u|^p-C_F,&&\text{for every }u\in\R;\label{eq:propF1}\\
			& F(u)\ge \overline{c}_F(|u|-1)^2,&&\text{for every }u\in\R;\label{eq:propF2}\\
			& P(u)\le C_P(1+|u|^r),&&\text{for every }u\in\R;\label{eq:propP1}\\
			& |P(u)-P(v)|\le \overline{C}_P|u-v|(1+|u|^{r-1}+|v|^{r-1}),&&\text{for every }u,v\in\R.\label{eq:propP2}
		\end{align}
	\end{subequations}
	
	As regards \eqref{eq:problemH} instead, we assume that the interpolation function $H$ possesses the following properties:
	\begin{subequations}
		\begin{gather}
			\label{hyp:H}
			H\colon \R \to [0,1]\text{ is Lipschitz continuous and of class $C^2(\R)$;}\\
			 H(u)\le C\frac{F(u)}{|F'(u)|},\quad\text{for all $u$ satisfying }F'(u)<0.\label{hyp:Htechnical}
		\end{gather}   
	\end{subequations}
	A standard example of interpolation function is (a regularization of) ${H}(u)=1\wedge \left(\frac{1+u}{2}\right)^+$, see for instance \cite{CWSL14, GarLamSitSty}. However, note that this example does not fulfil the technical assumption \eqref{hyp:Htechnical} (indeed, inequality \eqref{hyp:Htechnical} yields $H(\pm 1)=0$), which anyway is crucial to complete the sharp interface limit argument (see Lemma \ref{lemma:technical}). Nevertheless, a large class of nonlinearities satisfies both \eqref{hyp:H} and \eqref{hyp:Htechnical}.
	
	As customary in the analysis of the Cahn-Hilliard equation, we finally introduce the natural Ginzburg-Landau energy
	\begin{equation*}
		E^\eps(t):=\int_\Omega e^\eps(\varphi^\eps(t))\d x,\qquad \text{for }t\in [0,T],
	\end{equation*}
	where we define the associated energy density
	\begin{equation}\label{eq:endens}
		e^\eps(\varphi):=\frac\eps 2|\nabla\varphi|^2+\frac 1\eps F(\varphi).
	\end{equation}
	
	\subsection{Diffuse interface models}
	Existence of \emph{strong} solutions to problem \eqref{eq:diffusemodel1}--\eqref{eq:diffusemodel2} with the choice \eqref{eq:problemP}, here and henceforth called Problem P, has been proved in \cite{FriGrasRoc} under slightly stronger assumptions on the function $F$, which however still include the classical double-well potential \eqref{double}.
	\begin{thm}[\textbf{Well-posedness for Problem P}]\label{thm:existenceP}
		In addition to \eqref{hyp:F}, assume that the potential $F$ can be written as $F=F_{\rm c}+F_{\rm nc}$, and $F_{\rm c},F_{\rm nc}\in C^2(\R)$ satisfy
		\begin{equation}\label{hyp:F2}
			\begin{aligned}
				&c(1+|u|^{p-2})\le F_{\rm c}''(u)\le C(1+|u|^{p-2}),&&\text{for all }u\in \R;\\
				&|F_{\rm nc}''(u)|\le C,&&\text{for all }u\in \R.
			\end{aligned}
		\end{equation}
		Let the proliferation function $P$ satisfy \eqref{hyp:P}. Then for any pair of initial data $(\varphi^\eps_0,\sigma^\eps_0)\in H^3(\Omega)\times H^1(\Omega)$ with $\partial_n\varphi^\eps_0=0$ in $\partial \Omega$ there exists a unique solution $(\varphi^\eps,\sigma^\eps)$ to Problem P in the sense that:
		\begin{itemize}
			\item[(1)] $
			\varphi^\eps\in L^\infty(0,T;H^3(\Omega))\cap H^1(0,T;H^1(\Omega)), \quad\text{with}\quad \partial_n\varphi^\eps=0\text{ in }L^\infty(0,T;L^2(\partial\Omega));\smallskip\\
			\sigma^\eps\in L^\infty(0,T;H^1(\Omega))\cap H^1(0,T;L^2(\Omega));\smallskip\\
			\mu^\eps:= -\eps\Delta\varphi^\eps+\frac 1\eps F'(\varphi^\eps)\in L^\infty(0,T;H^1(\Omega));
			$\smallskip
			\item[(2)] $\varphi^\eps(0)=\varphi^\eps_0$ and  $\sigma^\eps(0)=\sigma^\eps_0$;\smallskip
			\item[(3)] for almost every time $t\in (0,T)$ and for all $\Phi,\Psi\in H^1(\Omega)$ there hold
			\begin{equation}\label{eq:weakeq}
				\begin{cases}
					(\dot\varphi^\eps(t),\Phi)_{2}+(\nabla\mu^\eps(t),\nabla \Phi)_{2}=(P(\varphi^\eps(t))(\sigma^\eps(t)-\mu^\eps(t)), \Phi)_{2},\\
					(\dot\sigma^\eps(t),\Psi)_{2}+(\nabla\sigma^\eps(t),\nabla \Psi)_{2}=-(P(\varphi^\eps(t))(\sigma^\eps(t)-\mu^\eps(t)), \Psi)_{2};
				\end{cases}
			\end{equation}
			\item[(4)] For all $t\in [0,T]$ the following energy balance holds true:
			\begin{equation}\label{eq:EB}
				\begin{aligned}
					&\,E^\eps(t)+\frac 12 \|\sigma^\eps(t)\|^2_{2}+\int_0^t\left(\|\nabla\mu^\eps(\tau)\|^2_{2}+\|\nabla\sigma^\eps(\tau)\|^2_{2}+\|\sqrt{P(\varphi^\eps(\tau))}(\sigma^\eps(\tau){-}\mu^\eps(\tau))\|^2_{2}\right)\d\tau\\
					&\,= E^\eps(0)+\frac 12 \|\sigma^\eps_0\|^2_{2}.
				\end{aligned}
			\end{equation}
		\end{itemize}
	\end{thm}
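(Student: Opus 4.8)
\medskip
\noindent\textit{Proof strategy.}
Since Theorem~\ref{thm:existenceP} is the well-posedness result of \cite{FriGrasRoc}, I only sketch how one would prove it, following the classical route for Cahn--Hilliard type systems: a Faedo--Galerkin scheme, a chain of uniform a priori estimates, compactness and passage to the limit, and finally a contraction argument for uniqueness. As approximation spaces I would take $V_n:=\mathrm{span}\{w_1,\dots,w_n\}$ with $\{w_j\}_{j\ge 1}$ the $L^2(\Omega)$-orthonormal eigenfunctions of the Neumann Laplacian (so that $w_1$ is constant), and solve the system \eqref{eq:weakeq} projected onto $V_n$, with $\mu^\eps_n$ the $L^2(\Omega)$-projection of $-\eps\Delta\varphi^\eps_n+\frac 1\eps F'(\varphi^\eps_n)$ and with projected initial data; local solvability of the resulting ODE system is immediate since $F'$ and $P$ are locally Lipschitz, and the estimates below make the solution global. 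The first and structurally crucial step is the energy estimate: testing the first equation with $\mu^\eps_n$, the defining relation for $\mu^\eps_n$ with $\dot\varphi^\eps_n$, and the second equation with $\sigma^\eps_n$, then summing, the coupling terms collapse to
\[
(P(\varphi^\eps_n)(\sigma^\eps_n-\mu^\eps_n),\mu^\eps_n)_2-(P(\varphi^\eps_n)(\sigma^\eps_n-\mu^\eps_n),\sigma^\eps_n)_2=-\|\sqrt{P(\varphi^\eps_n)}\,(\sigma^\eps_n-\mu^\eps_n)\|_2^2\le 0,
\]
which is exactly where the sign relation $R_1=-S_1$ and the nonnegativity of $P$ enter; this yields the energy balance \eqref{eq:EB} at the Galerkin level and hence, uniformly in $n$, bounds for $\varphi^\eps_n$ in $L^\infty(0,T;H^1(\Omega))$ (and, by \eqref{eq:propF1}, in $L^\infty(0,T;L^p(\Omega))$), for $\sigma^\eps_n$ in $L^\infty(0,T;L^2(\Omega))$, and for $\nabla\mu^\eps_n,\nabla\sigma^\eps_n$ in $L^2(Q)$. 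Testing the defining relation for $\mu^\eps_n$ with the constant gives $[\mu^\eps_n]=\frac{1}{\eps|\Omega|}\int_\Omega F'(\varphi^\eps_n)$, which is bounded via $|F'(u)|\le C(1+|u|^{p-1})$ and the $L^p$-bound, so $\mu^\eps_n$ is bounded in $L^2(0,T;H^1(\Omega))$ by Poincaré--Wirtinger.

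\medskip
Next I would bootstrap to the regularity claimed in item~(1). Testing the second equation with $\dot\sigma^\eps_n$ and with $-\Delta\sigma^\eps_n$, and controlling the source $P(\varphi^\eps_n)(\sigma^\eps_n-\mu^\eps_n)$ through \eqref{eq:propP1}, the restriction $r\le p-2$, the embedding $H^1(\Omega)\hookrightarrow L^6(\Omega)$ (valid for $d\le 3$) and a Gronwall argument, one obtains $\sigma^\eps_n$ bounded in $L^\infty(0,T;H^1(\Omega))\cap H^1(0,T;L^2(\Omega))$. Then, differentiating the first equation in time and pairing with $\dot\mu^\eps_n$ (or, equivalently, estimating $\|\dot\varphi^\eps_n-R_1(\varphi^\eps_n,\sigma^\eps_n,\mu^\eps_n)\|_{H^{-1}(\Omega)}$), one exploits once more the favourable sign $-(P(\varphi^\eps_n)\dot\mu^\eps_n,\dot\mu^\eps_n)_2\le 0$ to absorb the dangerous part of $\partial_t R_1$, while the remaining contributions — notably $P'(\varphi^\eps_n)\dot\varphi^\eps_n(\sigma^\eps_n-\mu^\eps_n)$ — are handled via \eqref{hyp:F2}, \eqref{eq:propP2} and Sobolev embeddings; a Gronwall argument then delivers $\mu^\eps_n$ bounded in $L^\infty(0,T;H^1(\Omega))$ and $\dot\varphi^\eps_n$ bounded in $L^2(0,T;H^1(\Omega))$. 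Finally, rewriting the defining relation for $\mu^\eps_n$ as $-\eps\Delta\varphi^\eps_n=\mu^\eps_n-\frac 1\eps F'(\varphi^\eps_n)$ and invoking elliptic regularity for the Neumann Laplacian — noting that $\varphi^\eps_n$ is then bounded in $L^\infty(0,T;H^2(\Omega))\hookrightarrow L^\infty(Q)$, so that $F''(\varphi^\eps_n)$ is bounded and $\nabla F'(\varphi^\eps_n)=F''(\varphi^\eps_n)\nabla\varphi^\eps_n$ is bounded in $L^\infty(0,T;L^2(\Omega))$ — yields $\varphi^\eps_n$ bounded in $L^\infty(0,T;H^3(\Omega))$.

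\medskip
With these uniform bounds I would extract a subsequence converging weakly-$*$ and, by the Aubin--Lions lemma, strongly for $\varphi^\eps_n$, to a limit $(\varphi^\eps,\sigma^\eps,\mu^\eps)$ with the regularity of item~(1); the strong $L^2(Q)$-convergence of $\varphi^\eps_n$, together with the uniform integrability furnished by the above bounds, allows passing to the limit in $F'(\varphi^\eps_n)$ and in $P(\varphi^\eps_n)(\sigma^\eps_n-\mu^\eps_n)$, which gives (1)--(3), while the uniform $H^1(0,T;\cdot)$ bounds give the initial conditions (2). The balance (4) then holds as a genuine identity, since the attained regularity makes it legitimate to test the limit equations with $\mu^\eps$, $\dot\varphi^\eps$ and $\sigma^\eps$ and to repeat the above computation on $[0,t]$. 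Uniqueness is obtained by writing the system for the difference of two solutions with the same data, testing the difference of the first equations with the (mean-corrected inverse Neumann--Laplacian of the) difference of the phase fields and the difference of the second equations with the difference of the nutrients, using that $P$ and $F'$ are Lipschitz on the bounded sets where the solutions live (by item~(1) and $d\le 3$), the cancellation coming from $R_1=-S_1$, and Gronwall's inequality.

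\medskip
The genuinely delicate point will be the higher-order estimate producing $\mu^\eps\in L^\infty(0,T;H^1(\Omega))$, and hence $\varphi^\eps\in L^\infty(0,T;H^3(\Omega))$, in the presence of the mass source $R_1$: unlike the conservative Cahn--Hilliard equation, here the spatial averages of $\varphi^\eps$ and $\mu^\eps$ are not conserved, so $[\mu^\eps]$ must be recovered from the growth of $F'$, and $\partial_t R_1$ generates terms such as $P'(\varphi^\eps)\dot\varphi^\eps(\sigma^\eps-\mu^\eps)$ whose control is exactly what fixes the quantitative restrictions $p\in[3,6)$ and $r\in[1,p-2]$ in \eqref{hyp:F2}--\eqref{hyp:P}, balanced against the Sobolev embeddings available in dimension $d\in\{2,3\}$. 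This is also why the stronger structural decomposition \eqref{hyp:F2} of $F$ is imposed here, beyond the bare assumption \eqref{hyp:F}.
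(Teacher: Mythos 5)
This theorem is not proved in the paper: it is imported verbatim from \cite{FriGrasRoc} (the text says explicitly that existence of strong solutions ``has been proved in \cite{FriGrasRoc}''), so there is no in-paper argument to compare against. Judged on its own terms, your sketch follows the standard Faedo--Galerkin route for Cahn--Hilliard systems with mass source and correctly identifies the structural points that make the statement true: the cancellation coming from $R_1=-S_1$ together with $P\ge 0$, which produces the dissipative term $\|\sqrt{P(\varphi^\eps)}(\sigma^\eps-\mu^\eps)\|_2^2$ and hence the energy balance \eqref{eq:EB}; the recovery of the mean $[\mu^\eps]$ from $\frac1\eps\int_\Omega F'(\varphi^\eps)$, which is indeed where the growth hypotheses enter because the mass of $\varphi^\eps$ is not conserved; and the role of the decomposition \eqref{hyp:F2} in controlling the sign-indefinite part of $F''$ in the higher-order estimate. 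This is consistent with the strategy of \cite{FriGrasRoc}.

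One caveat on the ordering of your a priori estimates. You propose to close the $L^\infty(0,T;H^1(\Omega))\cap H^1(0,T;L^2(\Omega))$ bound for $\sigma^\eps_n$ \emph{before} the higher-order estimate on $(\varphi^\eps_n,\mu^\eps_n)$. At that stage you only control $\varphi^\eps_n$ in $L^\infty(0,T;L^{p\vee 6}(\Omega))$ and $\mu^\eps_n$ in $L^2(0,T;L^6(\Omega))$, so estimating $\|P(\varphi^\eps_n)(\sigma^\eps_n-\mu^\eps_n)\|_2\le\|P(\varphi^\eps_n)\|_3\|\sigma^\eps_n-\mu^\eps_n\|_6$ requires $\varphi^\eps_n$ bounded in $L^{3r}(\Omega)$ with $3r$ possibly as large as $3(p-2)$, which exceeds $p\vee 6$ when $p$ is close to $6$; the restriction $r\le p-2$ alone does not close this step. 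The clean fix is to run the higher-order estimate on the phase first (or simultaneously, in a single Gr\"onwall loop), since $\varphi^\eps_n$ bounded in $L^\infty(0,T;H^2(\Omega))\hookrightarrow L^\infty(Q)$ for $d\le 3$ makes $P(\varphi^\eps_n)$ uniformly bounded and renders the nutrient estimate elementary; alternatively one can exploit the already available bound on $\|\sqrt{P(\varphi^\eps_n)}(\sigma^\eps_n-\mu^\eps_n)\|_{L^2(Q)}$ from the energy identity. With that reordering, and with the mean-correction you already mention in the uniqueness step (needed precisely because $[\varphi_1-\varphi_2]$ is not conserved in the presence of the source), the outline matches the known proof.
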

	
	\begin{rmk}
		For almost every time $t\in [0,T]$, since $\varphi^\eps(t)\in H^3(\Omega)$ and $d=2,3$, Sobolev embedding yields $\varphi^\eps(t)\in L^\infty(\Omega)$, whence $P(\varphi^\eps(t))(\sigma^\eps(t)-\mu^\eps(t))\in L^2(\Omega)$ and so the right-hand side of \eqref{eq:weakeq} is meaningful.
		
		Since $\varphi^\eps\in C([0,T];H^1(\Omega))$, again by Sobolev embedding one deduces $\varphi^\eps\in C([0,T];L^6(\Omega))$. Recalling that $F$ has at most $6$-growth by \eqref{hyp:F2}, we thus obtain $F(\varphi^\eps)\in C([0,T];L^1(\Omega))$, and so the energy $E^\eps$ is continuous.
		
		Since $\varphi^\eps,\sigma^\eps,\mu^\eps\in L^\infty(0,T;H^1(\Omega))$, as before we infer $\varphi^\eps,\sigma^\eps,\mu^\eps\in L^\infty(0,T;L^6(\Omega))$. By using the 4-growth of $P$ (see \eqref{hyp:P}), this implies $\sqrt{P(\varphi^\eps(\tau))}(\sigma^\eps(\tau){-}\mu^\eps(\tau))\in L^\infty(0,T;L^2(\Omega))$ and so the energy balance \eqref{eq:EB} makes sense.
		
		Since $\dot\varphi^\eps,\dot\sigma^\eps\in L^2(0,T;L^2(\Omega))$, standard elliptic regularity yields $\sigma^\eps,\mu^\eps\in L^2(0,T;H^2(\Omega))$, and so the solution provided by Theorem~\ref{thm:existenceP} is actually a strong solution to \eqref{eq:diffusemodel1}.
	\end{rmk}
	
	\begin{rmk}
		Observe that solutions to Problem P preserve the mass of the sum of phase variable and nutrient, namely
		\begin{equation}\label{eq:massconservation}
			[\varphi^\eps(t)+\sigma^\eps(t)]=[\varphi^\eps_0+\sigma^\eps_0],\qquad\text{for all }t\in [0,T].
		\end{equation}
		Indeed, by summing the two lines in \eqref{eq:weakeq}, and choosing $\Phi=\Psi\equiv1$, for almost every time we obtain
		\begin{equation*}
			\frac{\d}{\d t}[\varphi^\eps(t)+\sigma^\eps(t)]=\frac{(\dot\varphi^\eps(t),1)_{2}+(\dot\sigma^\eps(t),1)_{2}}{|\Omega|}=0.
		\end{equation*}
	\end{rmk}
	
	Existence of \emph{strong} solutions to problem \eqref{eq:diffusemodel1}--\eqref{eq:diffusemodel2} for the choice \eqref{eq:problemH}, here and henceforth called Problem H, has instead been proved in \cite{GarLamRoc} (see also \cite{GarLam}), again with stricter hypotheses on $F$. In this case, exploiting the maximum principle, the authors showed that the nutrient remains confined between $0$ and $1$ along the entire evolution, starting of course from an initial state fulfilling the same condition.
	\begin{thm}[\textbf{Well-posedness for Problem H}]\label{thm:existenceH}
		In addition to \eqref{hyp:F}, assume that the potential $F$ fulfils
		\begin{equation}\label{hyp:F3}
			\begin{aligned}
				&|F'(u)|\le C(1+F(u)),&&\text{for all }u\in \R;\\
				&|F''(u)|\le C(1+|u|^{p-2}),&&\text{for all }u\in \R.
			\end{aligned}
		\end{equation}
		Let the interpolation function $H$ satisfy \eqref{hyp:H}. Then for any pair of initial data $(\varphi^\eps_0,\sigma^\eps_0)\in H^3(\Omega)\times H^1(\Omega)$ with $\partial_n\varphi^\eps_0=0$ in $\partial \Omega$ and $0\le \sigma^\eps_0\le 1$ almost everywhere in $\Omega$ there exists a unique solution $(\varphi^\eps,\sigma^\eps)$ to Problem H in the sense that:
		\begin{itemize}
			\item[(1)] $
			\varphi^\eps\in L^\infty(0,T;H^2(\Omega))\cap L^2(0,T;H^3(\Omega))\cap H^1(0,T;L^2(\Omega))\cap C(\overline Q),\\\text{with } \partial_n\varphi^\eps=0\text{ in }L^\infty(0,T;L^2(\partial\Omega));\smallskip\\
			\sigma^\eps\in L^\infty(0,T;H^1(\Omega))\cap L^2(0,T;H^2(\Omega))\cap H^1(0,T;L^2(\Omega)),\\ \text{with } 0\le \sigma^\eps\le 1 \text{ almost everywhere in $Q$};\smallskip\\
			\mu^\eps:= -\eps\Delta\varphi^\eps+\frac 1\eps F'(\varphi^\eps)\in L^\infty(0,T;L^2(\Omega))\cap L^2(0,T;H^2(\Omega));
			$\smallskip
			\item[(2)] $\varphi^\eps(0)=\varphi^\eps_0$ and  $\sigma^\eps(0)=\sigma^\eps_0$;\smallskip
			\item[(3)] for almost every time $t\in (0,T)$ and for all $\Phi,\Psi\in H^1(\Omega)$ there hold
			\begin{equation}\label{eq:weakeqH}
				\begin{cases}
					(\dot\varphi^\eps(t),\Phi)_{2}+(\nabla\mu^\eps(t),\nabla \Phi)_{2}=((\sigma^\eps(t)-1)H(\varphi^\eps(t)), \Phi)_{2},\\
					(\dot\sigma^\eps(t),\Psi)_{2}+(\nabla\sigma^\eps(t),\nabla \Psi)_{2}=-(\sigma^\eps(t)H(\varphi^\eps(t)), \Psi)_{2};
				\end{cases}
			\end{equation}
			\item[(4)] For all $t\in [0,T]$ the following energy balance holds true:
			\begin{equation}\label{eq:EBH}
				\begin{aligned}
					&\,E^\eps(t)+\frac 12 \|\sigma^\eps(t)\|^2_{2}+\int_0^t\left(\|\nabla\mu^\eps(\tau)\|^2_{2}+\|\nabla\sigma^\eps(\tau)\|^2_{2}+\|\sqrt{H(\varphi^\eps(\tau))}\sigma^\eps(\tau)\|^2_{2}\right)\d\tau\\
					&\,=\, E^\eps(0)+\frac 12 \|\sigma^\eps_0\|^2_{2}+\int_0^t(\mu^\eps(\tau),(\sigma^\eps(\tau)-1)H(\varphi^\eps(\tau))_2\d\tau.
				\end{aligned}
			\end{equation}
		\end{itemize}
	\end{thm}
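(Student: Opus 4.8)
The plan is the classical one for Cahn--Hilliard--reaction systems, carried out at \emph{fixed} $\eps>0$ (so no $\eps$-uniform bounds are sought): a Faedo--Galerkin approximation, a hierarchy of a priori estimates, a compactness/limit argument, and finally a contraction estimate for uniqueness. One takes as basis the $L^2(\Omega)$-orthonormal family $\{w_j\}_{j\ge1}$ of eigenfunctions of $-\Delta$ with homogeneous Neumann conditions, looks for $\varphi^\eps_n,\mu^\eps_n,\sigma^\eps_n$ in $\mathrm{span}\{w_1,\dots,w_n\}$ solving the projected system with suitably truncated initial data (local-in-time solvability being an ODE matter), and derives $n$-uniform bounds; global existence on $[0,T]$ then follows.

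The decisive estimate is the discrete energy balance, obtained by testing the $\varphi$-equation with $\mu^\eps_n$, the identity $\mu^\eps_n=-\eps\Delta\varphi^\eps_n+\tfrac1\eps F'(\varphi^\eps_n)$ with $\dot\varphi^\eps_n$, and the $\sigma$-equation with $\sigma^\eps_n$, then summing; it reproduces \eqref{eq:EBH}, whose only sign-indefinite term is $\int_0^t(\mu^\eps_n,(\sigma^\eps_n-1)H(\varphi^\eps_n))_2\d\tau$. Two ingredients control it. First, the pointwise bound $0\le\sigma^\eps_n\le1$ — which must be built into the scheme, since Galerkin truncation does not preserve order (one convenient route is to solve the $\sigma$-equation, which is linear in $\sigma$ with nonnegative zero-order coefficient $H(\varphi)$, by comparison inside a fixed-point iteration, or to truncate the source and pass to the limit) — makes $(\sigma^\eps_n-1)H(\varphi^\eps_n)$ bounded in $L^\infty$. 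Second, the first growth bound in \eqref{hyp:F3} gives
\begin{equation*}
|[\mu^\eps_n(t)]|\le\frac{1}{\eps|\Omega|}\int_\Omega|F'(\varphi^\eps_n(t))|\d x\le\frac{C}{\eps}\bigl(1+\|F(\varphi^\eps_n(t))\|_{1}\bigr)\le\frac{C}{\eps}\bigl(1+\eps\,E^\eps_n(t)\bigr).
\end{equation*}
Splitting $\mu^\eps_n=[\mu^\eps_n]+(\mu^\eps_n-[\mu^\eps_n])$ and using Poincaré--Wirtinger plus Young on the oscillation part (absorbed by $\tfrac12\int_0^t\|\nabla\mu^\eps_n\|_2^2$) and the two bounds above on the mean part, the dangerous term is reduced to $C_\eps t+C\int_0^t(E^\eps_n+\tfrac12\|\sigma^\eps_n\|_2^2)\d\tau$, and Grönwall closes the estimate, yielding $n$-uniform bounds (for fixed $\eps$) on $E^\eps_n$, on $\|\sigma^\eps_n\|_{L^\infty(0,T;L^2)}$ and on $\nabla\mu^\eps_n,\nabla\sigma^\eps_n,\sqrt{H(\varphi^\eps_n)}\sigma^\eps_n$ in $L^2(Q)$.

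The rest is the standard regularity and compactness machinery. Testing the $\sigma$-equation with $-\Delta\sigma^\eps_n$ upgrades it to $L^\infty(0,T;H^1)\cap L^2(0,T;H^2)$ and hence, via the equation, $\dot\sigma^\eps_n\in L^2(Q)$ (the bounds $0\le\sigma^\eps\le1$ being re-derived on the limit by testing with $(\sigma^\eps)^-$ and $(\sigma^\eps-1)^+$ and using $H\ge0$). The higher regularity of $\varphi^\eps$ follows from the usual estimates for the fourth-order operator — testing the $\varphi$-equation written as $\dot\varphi^\eps+\eps\Delta^2\varphi^\eps-\tfrac1\eps\Delta\bigl(F'(\varphi^\eps)\bigr)=(\sigma^\eps-1)H(\varphi^\eps)$ with $\dot\varphi^\eps$, and elliptic regularity for $-\eps\Delta\varphi^\eps=\mu^\eps-\tfrac1\eps F'(\varphi^\eps)$, the growth bounds \eqref{hyp:F3} serving to handle $F'(\varphi^\eps)$ and $F''(\varphi^\eps)$ — giving $\varphi^\eps\in L^\infty(0,T;H^2)\cap L^2(0,T;H^3)\cap H^1(0,T;L^2)$ and $\mu^\eps\in L^\infty(0,T;L^2)\cap L^2(0,T;H^2)$, whence $\varphi^\eps\in C(\overline Q)$ since $L^\infty(0,T;H^2)\cap C([0,T];L^2)\hookrightarrow C([0,T];H^s)\hookrightarrow C(\overline Q)$ for $s\in(d/2,2)$. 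Aubin--Lions then gives $\varphi^\eps_n\to\varphi^\eps$ strongly in $C([0,T];L^2)$ and (along a subsequence) in $L^2(0,T;H^2)$, and $\sigma^\eps_n\rightharpoonup\sigma^\eps$; since $H\in C^2$ is bounded and $F\in C^3$, the nonlinear terms converge in the appropriate topologies, the variational identities \eqref{eq:weakeqH} are recovered, \eqref{eq:EBH} follows by combining lower semicontinuity for one inequality with a test of the limit equations for the reverse, and the initial data are attained by continuity in time.

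For uniqueness, take two solutions $(\varphi_i,\sigma_i,\mu_i)$ with equal data and set $\varphi:=\varphi_1-\varphi_2$, likewise $\sigma,\mu$. The genuine subtlety — absent in the conservative Cahn--Hilliard equation — is that the source destroys mass conservation, so $[\varphi(t)]\ne 0$ and one cannot test the $\varphi$-difference equation directly with the inverse Neumann Laplacian. I would instead split $\varphi=\overline\varphi+[\varphi]$, test the mean-zero part of the equation against the inverse Neumann Laplacian of $\overline\varphi$ (controlling the $F'$-contribution by monotonicity of $F$ up to a bounded perturbation — equivalently, since the $\varphi_i$ are bounded in $C(\overline Q)$, so $F'$ and $H$ are Lipschitz on their range), and estimate $[\varphi]$ through the scalar ODE obtained by integrating the $\varphi$-difference equation over $\Omega$ (whose right-hand side is bounded by $\|H\|_\infty$, the $C(\overline Q)$-bound on the $\varphi_i$, and $\|\sigma\|_2$); coupled with a test of the $\sigma$-difference equation by $\sigma$ and a Gagliardo--Nirenberg interpolation for $\|\overline\varphi\|_2$, a Grönwall inequality forces $\varphi\equiv\sigma\equiv0$, hence $\mu\equiv0$. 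The points I expect to demand real care are thus the handling of the sign-indefinite term in \eqref{eq:EBH} — feasible precisely because \eqref{hyp:F3} makes $[\mu^\eps]$ controllable by the energy, together with $0\le\sigma^\eps\le1$ and Grönwall — the compatibility of the maximum principle with the chosen approximation, and the uniqueness argument with its mean/oscillation splitting forced by the loss of mass conservation; the remainder is routine, as in \cite{GarLamRoc}.
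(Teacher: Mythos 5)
The paper does not actually prove Theorem~\ref{thm:existenceH}: it is imported from \cite{GarLamRoc} (see also \cite{GarLam}), and the only ingredient the paper adds is the observation, in the remark following the statement, that the energy identity \eqref{eq:EBH} is not in \cite{GarLamRoc} but can be deduced from the regularity in item (1) via the chain-rule argument of \cite[Lemma~3.9]{RocSchim}. Your sketch is therefore a from-scratch reconstruction, and it follows essentially the strategy of those references: an approximation scheme, the energy estimate with the sign-indefinite term controlled through $0\le\sigma^\eps\le1$ and the bound $|[\mu^\eps]|\le C\eps^{-1}(1+\eps E^\eps)$ coming from \eqref{hyp:F3}$_1$, parabolic bootstrapping for the higher norms, and a mean/oscillation splitting for uniqueness forced by the loss of mass conservation. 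I see no gap in it. Two remarks. First, you can close the energy estimate without needing the $L^\infty$ bound on $\sigma^\eps_n$ at the approximate level (the point you rightly flag as delicate for a plain Galerkin truncation): testing the $\sigma$-equation with $\sigma^\eps_n$ alone gives $\tfrac12\tfrac{\d}{\d t}\|\sigma^\eps_n\|_2^2+\|\nabla\sigma^\eps_n\|_2^2=-\int_\Omega H(\varphi^\eps_n)(\sigma^\eps_n)^2\le0$, so $\|\sigma^\eps_n\|_{L^\infty(0,T;L^2)}$ is bounded a priori and the mean part $[\mu^\eps_n]\int_\Omega(\sigma^\eps_n-1)H(\varphi^\eps_n)$ is at most $C_\eps(1+E^\eps_n)$; the bounds $0\le\sigma^\eps\le1$ can then be recovered on the limit by testing with $(\sigma^\eps)^-$ and $(\sigma^\eps-1)^+$, exactly as you propose. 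Second, for \eqref{eq:EBH} as an identity you do not need the lower-semicontinuity/reverse-inequality pairing you mention: once the limit solution has the regularity of item (1), testing \eqref{eq:weakeqH}$_1$ with $\mu^\eps$, the definition of $\mu^\eps$ with $\dot\varphi^\eps$, and \eqref{eq:weakeqH}$_2$ with $\sigma^\eps$ are all admissible, and the identity follows directly; this is precisely the mechanism the paper points to via \cite[Lemma~3.9]{RocSchim}.
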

	
	\begin{rmk}
		The validity of the energy balance $(4)$ above is not proved in \cite{GarLamRoc}, but can be deduced from the regularity of $\varphi^\eps$ given by $(1)$. We refer for instance to \cite[Lemma~3.9]{RocSchim}.
	\end{rmk}
	
	\begin{rmk}\label{rmk:Lip}
		Notice that for solutions to Problem H the map $t\mapsto [\varphi^\eps(t)]$ is nonincreasing and 1-Lipschitz continuous in $[0,T]$. Indeed from \eqref{eq:weakeqH}$_1$, exploiting that both $H$ and $\sigma^\eps$ are nonnegative and bounded by $1$ we deduce 
		\begin{equation*}
			\frac{\d}{\d t}[\varphi^\eps(t)]=\frac{1}{|\Omega|}((\sigma^\eps(t)-1)H(\varphi^\eps(t)), 1)_{2}\in [-1,0].
		\end{equation*}
	\end{rmk}
	
	\subsection{Sharp interface models}	
	The notion of solution to the sharp interface model \eqref{eq:sharpinterfacemodel} we consider in this paper is the weak notion of \emph{varifold solution} introduced for the Cahn-Hilliard equation in \cite{Chen}, to which we refer for a complete explanation (see also \cite{MelRoc}). We just stress that the use of varifolds as in $(c)$ and $(e)$ below is a way to write in a weak sense the equation 
	\begin{equation*}
		\mu=\theta\kappa,\qquad \text{in }\Gamma,
	\end{equation*}
	the constant $\theta$ being defined in \eqref{eq:theta}. The energy inequality $(f)$ in the following Def.~\ref{def:varifoldsol} is instead interpreted as an entropy condition, which selects feasible solutions.
	
	\begin{defi}\label{def:varifoldsol}
		Given an initial measurable set $\Omega_0\subseteq \Omega$ and an initial datum $\sigma_0\in L^2(\Omega)$, we say that a quadruple $(Q^C, \sigma, \mu,V)$ is a \emph{varifold solution} to the sharp interface model \eqref{eq:sharpinterfacemodel}, with either \eqref{eq:problemP} or \eqref{eq:problemH}, in $[0,T]$ if the following conditions are satisfied:
		\begin{itemize}
			\item[(a)] $\sigma,\mu\in L^2(0,T;H^1(\Omega))$;
			\item[(b)] the set $Q^C\subseteq Q$ fulfils $\chi_{Q^C}\in C([0,T];L^1(\Omega))\cap B([0,T];BV(\Omega))$;
			\item[(c)] $V$ is a Radon measure on $Q\times\mc P$ with the following property: for almost every $t\in (0,T)$ there exist a Radon measure $\lambda^t$ on $\overline\Omega$ and $\lambda^t$-measurable functions $c_i^t\colon \overline{\Omega}\to \R$, $p_i^t\colon \overline{\Omega}\to \mc P$, for $i=1,\dots, d$, fulfilling
			\begin{align}
				&0\le c_i^t\le 1,\quad \sum_{i=1}^{d}c_i^t\ge 1,\quad \sum_{i=1}^{d}p_i^t\otimes p_i^t=I,\quad \lambda^t\text{-}a.e.,\label{eq:coefficients}\\
				& \frac{\d|D\chi_{Q^C_t}|}{\d\lambda^t}\le \frac{1}{2\theta},\quad \lambda^t\text{-}a.e.,\label{eq:density}
			\end{align}
			where $\theta$ is given by \eqref{eq:theta}, such that, defining the varifold $V^t$ on $\Omega$ as
			\begin{equation}\label{eq:defVt}
				\int_{\Omega\times \mc P}\psi(x,p)\d V^t(x,p):=\sum_{i=1}^d\int_\Omega c_i^t(x)\psi(x,p_i^t(x))\d \lambda^t(x),\quad\text{for every }\psi\in C_0(\Omega\times\mc P),
			\end{equation}
			there holds $\d V(t,x,p)=\d V^t(x,p)\d t$;
			\item[(d)] For any $\Phi,\Psi\in L^2(0,T;H^1(\Omega))\cap H^1(0,T;L^2(\Omega))$ with $\Phi(T)=\Psi(T)=0$, setting $\varphi:=-1+2\chi_{Q^C}$ there holds
			\begin{equation*}
				\begin{cases}\displaystyle
					\int_0^T\!\!\left({-}2\int_{Q^C_\tau}\dot\Phi(\tau)\d x{+}(\nabla\mu(\tau),\nabla\Phi(\tau))_{2}{-}(P(\varphi(\tau))(\sigma(\tau){-}\mu(\tau)),\Phi(\tau))_{2}\right)\d\tau=2\!\int_{\Omega_0}\!\!\!\Phi(0)\d x,\\\displaystyle
					\int_0^T\left({-}(\sigma(\tau),\dot\Psi(\tau))_{2}{+}(\nabla\sigma(\tau),\nabla\Psi(\tau))_{2}{+}(P(\varphi(\tau))(\sigma(\tau){-}\mu(\tau)),\Psi(\tau))_{2}\right)\d\tau=(\sigma_0,\Psi(0))_{2},
				\end{cases}
			\end{equation*}
			as regards problem \eqref{eq:problemP}, while there holds
			\begin{equation*}
				\begin{cases}\displaystyle
					\int_0^T\left({-}2\int_{Q^C_\tau}\dot\Phi(\tau)\d x{+}(\nabla\mu(\tau),\nabla\Phi(\tau))_{2}{-}((\sigma(\tau){-}1)H(\varphi(\tau)),\Phi(\tau))_{2}\right)\d\tau=2\int_{\Omega_0}\Phi(0)\d x,\\\displaystyle
					\int_0^T\left({-}(\sigma(\tau),\dot\Psi(\tau))_{2}{+}(\nabla\sigma(\tau),\nabla\Psi(\tau))_{2}{+}(\sigma(\tau)H(\varphi(\tau)),\Psi(\tau))_{2}\right)\d\tau=(\sigma_0,\Psi(0))_{2},
				\end{cases}
			\end{equation*}
			in case \eqref{eq:problemH};
			\item[(e)] for almost every $t\in(0,T)$ one has
			\begin{equation*}
				\langle\delta V^t, Y\rangle=2\int_{Q^C_t}\div(\mu(t)Y)\d x,\quad\text{for all }Y\in C^1_0(\Omega;\R^d),
			\end{equation*}
			where the first variation of a varifold has been introduced in \eqref{eq:firstvar};
			\item[(f)] for almost every $s,t\in (0,T)$ with $s\le t$ the following two energy inequalities hold true for \eqref{eq:problemP} and \eqref{eq:problemH}, respectively:
			\begin{equation*}
				\begin{aligned}
					1)\quad&\lambda^t(\overline{\Omega})+\frac 12 \|\sigma(t)\|^2_{2}+\int_s^t\left(\|\nabla\mu(\tau)\|^2_{2}+\|\nabla\sigma(\tau)\|^2_{2}+\|\sqrt{P(\varphi(\tau))}(\sigma(\tau){-}\mu(\tau))\|^2_{2}\right)\d\tau\\
					\le& \lambda^s(\overline{\Omega})+\frac 12 \|\sigma(s)\|^2_{2}.\bigskip\\	
					2)\quad&\lambda^t(\overline{\Omega})+\frac 12 \|\sigma(t)\|^2_{2}+\int_s^t\left(\|\nabla\mu(\tau)\|^2_{2}+\|\nabla\sigma(\tau)\|^2_{2}+\|\sqrt{H(\varphi(\tau))}\sigma(\tau))\|^2_{2}\right)\d\tau\\
					\le& \lambda^s(\overline{\Omega})+\frac 12 \|\sigma(s)\|^2_{2}+\int_s^t(\mu(\tau),(\sigma(\tau)-1)H(\varphi(\tau))_2\d\tau.
				\end{aligned}
			\end{equation*}
		\end{itemize}
	\end{defi}
	
	\subsection{Main results}
	We are now in the  position to state the main theorems of the paper, regarding the rigorous limit of solutions to the diffuse interface Problems P and H to varifold solutions of the sharp interface problem \eqref{eq:sharpinterfacemodel}. We stress that the validity of such limit holds as long as the resulting evolution has a proper interface, namely as long as the limit phase variable $\varphi$ does not degenerate in a pure phase $\varphi\equiv \pm 1$ (see \eqref{eq:Tmax} and \eqref{eq:TmaxH}). We recall that pure phases represent a completely cancerous or a completely healthy tissue. The proofs of such results will be given in Sections~\ref{sec:sharpinterfaceP} and \ref{sec:sharpinterfaceH}, respectively.
	\begin{thm}[\textbf{Sharp interface limit for Problem P}]\label{thm:main}
		Assume that the potential $F$ satisfies \eqref{hyp:F}, \eqref{hyp:F2} and that the proliferation function $P$ satisfies \eqref{hyp:P}. Let $(\varphi^\eps,\sigma^\eps)$ be the unique solution to Problem P given by Theorem~\ref{thm:existenceP} with initial data $(\varphi^\eps_0,\sigma^\eps_0)\in H^3(\Omega)\times H^1(\Omega)$ with $\partial_n\varphi^\eps_0=0$ in $\partial \Omega$ satisfying
		\begin{subequations}\label{eq:hypinitial}
			\begin{equation}\label{eq:inenergybound}
				E^\eps(0)+\frac 12\|\sigma^\eps_0\|^2_{2}\le \mc E_0,
			\end{equation}
			and 
			\begin{equation}\label{eq:inmassbound}
				|[\varphi^\eps_0]|\le c_0\in [0,1).
			\end{equation}
		\end{subequations}
		Then, up to passing to a non relabelled subsequence, the following facts hold:
		\begin{itemize}
			
			\item[(I)] there exists a measurable set $Q^C\subseteq Q$ for which
			\begin{align*}
				&\bullet\, \varphi^\eps\xrightarrow[\eps\to 0]{}-1+2\chi_{Q^C}=:\varphi,\qquad\text{in $C([0,T];L^2(\Omega))$};\\
				&\bullet\, \varphi^\eps(t)\xrightarrow[\eps\to 0]{}\varphi(t),\qquad\text{in $L^q(\Omega)$ for all $q\in [1,p)$ and $t\in [0,T]$;}\\
				&\bullet\, \chi_{Q^C}\in C^\frac 18([0,T];L^1(\Omega))\cap B([0,T];BV(\Omega));
			\end{align*}
			\item[(II)] there exists a function $\sigma\in L^2(0,T;H^1(\Omega))$ for which
			\begin{align*}
				&\bullet\, \sigma^\eps \xrightharpoonup[\eps\to 0]{}\sigma,\qquad\text{in }L^2(0,T;H^1(\Omega));\\
				&\bullet\, \sigma^\eps \xrightarrow[\eps\to 0]{}\sigma,\qquad \text{in $L^2(0,T;L^q(\Omega))$ for all $q\in [1,6)$;}
			\end{align*}
			\item[(III)] there exist Radon measures $\lambda, \lambda_{i,j}$ on $\overline Q$, for $i,j=1,\dots,d$, for which
			\begin{align*}
				&\bullet\,e^\eps(\varphi^\eps)\d t\d x\xrightharpoonup[\eps\to 0]{\ast}\lambda,\qquad\qquad\text{in the sense of measures in $\overline Q$};\\
				&\bullet\, \eps\partial_i\varphi^\eps\partial_j\varphi^\eps\d t\d x\xrightharpoonup[\eps\to 0]{\ast}\lambda_{i,j},\quad\,\,\,\text{in the sense of measures in $\overline Q$};
			\end{align*}
		\end{itemize}
		Moreover, defining the critical time as
		\begin{equation}\label{eq:Tmax}
			T_{\rm cr}:=\sup\{t\in (0,T]:\, 0<|Q^C_t|<|\Omega|\},
		\end{equation}
		there hold:
		\begin{itemize}
			\item[(IV)] there exists a function $\mu\in L^2_{\rm loc}([0,T_{\rm cr});H^1(\Omega))$ for which
			\begin{align*}
				&\bullet\, \mu^\eps \xrightharpoonup[\eps\to 0]{}\mu,\quad\text{in }L^2(0,\widetilde{T};H^1(\Omega))\text{ for all $\widetilde{T}\in (0,T_{\rm cr})$};
			\end{align*}
			\item[(V)] there exists a Radon measure $V$ on $(0,T_{\rm cr})\times \Omega\times \mc P$ such that for any $\widetilde{T}\in (0,T_{\rm cr})$ the quadruple $(Q^C, \sigma, \mu,V)$ is a varifold solution to \eqref{eq:sharpinterfacemodel}--\eqref{eq:problemP} in $[0,\widetilde{T}]$ with initial data $(Q^C_0,\sigma_0)$, where $\sigma_0$ is a weak limit of $\sigma^\eps_0$, fulfilling in addition:
			\begin{align*}
				\bullet\,& \d\lambda(t,x)=\d\lambda^t(x)\d t;\\
				\bullet\,& \int_0^{\widetilde{T}}\langle\delta V^\tau,Y(\tau)\rangle\d\tau= \int_0^{\widetilde{T}}\int_\Omega\nabla Y(\tau,x):\left[\d\lambda(\tau,x)I-(\d\lambda_{i,j}(\tau,x))_{d\times d}\right],\\
				&\text{for all }Y\in C^1_0([0,\widetilde T]\times \Omega;\R^d).
			\end{align*}
		\end{itemize}
		Finally, if either
		\begin{subequations}
			\begin{equation}\label{eq:ass1}
				\bullet\, T\mc E_0<\frac{|\Omega|c_F}{2C_P(c_F+C_F)}(1-c_0)^2,
			\end{equation}
			or
			\begin{equation}\label{eq:ass2}
				\bullet\, |[\varphi^\eps_0+\sigma^\eps_0]|\le d_0<1,\quad\text{ and }\quad \mc E_0<\frac{|\Omega|}{2}(1-d_0)^2,
			\end{equation}
		\end{subequations}
		then actually $T_{\rm cr}=T$ and $(Q^C, \sigma, \mu,V)$ is a varifold solution in the whole $[0,{T}]$.
		
	\end{thm}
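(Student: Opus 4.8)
\emph{Proof plan.} Since parts (I)--(III) are already stated on the whole $[0,T]$, the task is to show that $T_{\rm cr}=T$ and that the conclusions of (IV)--(V) extend from the subintervals $[0,\widetilde T]$, $\widetilde T<T_{\rm cr}$, to $[0,T]$. The unifying observation is that both \eqref{eq:ass1} and \eqref{eq:ass2} force a constant $c_1<1$, independent of $\eps$ and $t$, with $|[\varphi^\eps(t)]|\le c_1$ for every $t\in[0,T]$ and every sufficiently small $\eps$. Granting this, the convergence $\varphi^\eps\to\varphi$ in $C([0,T];L^2(\Omega))$ from (I) gives $[\varphi^\eps(t)]\to[\varphi(t)]$, hence $|[\varphi(t)]|\le c_1$ and
\[
|Q^C_t|=\frac{|\Omega|}{2}\bigl(1+[\varphi(t)]\bigr)\in\Bigl[\frac{|\Omega|}{2}(1-c_1),\,\frac{|\Omega|}{2}(1+c_1)\Bigr]\subset(0,|\Omega|)
\]
for every $t\in(0,T]$; since $t\mapsto|Q^C_t|$ is continuous by (I), this already yields $T_{\rm cr}=T$. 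Under \eqref{eq:ass2} such a $c_1$ is immediate: the mass conservation \eqref{eq:massconservation} gives $[\varphi^\eps(t)]=[\varphi^\eps_0+\sigma^\eps_0]-[\sigma^\eps(t)]$, while the energy balance \eqref{eq:EB} yields $\|\sigma^\eps(t)\|_2^2\le 2\mc E_0$ and hence $|[\sigma^\eps(t)]|\le|\Omega|^{-1/2}\|\sigma^\eps(t)\|_2\le(2\mc E_0/|\Omega|)^{1/2}$, so that $|[\varphi^\eps(t)]|\le d_0+(2\mc E_0/|\Omega|)^{1/2}=:c_1$, and the bound $\mc E_0<\frac{|\Omega|}{2}(1-d_0)^2$ is exactly what makes $c_1<1$.

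Under \eqref{eq:ass1} the mass of $\varphi^\eps$ is no longer conserved, so I would instead test the first equation of \eqref{eq:weakeq} with $\Phi\equiv1$, obtaining $\frac{\d}{\d t}[\varphi^\eps(t)]=|\Omega|^{-1}\int_\Omega P(\varphi^\eps(t))(\sigma^\eps(t)-\mu^\eps(t))\d x$, then bound the integrand by $\sqrt{P(\varphi^\eps)}\cdot\sqrt{P(\varphi^\eps)}\,|\sigma^\eps-\mu^\eps|$ and use Cauchy--Schwarz in $x$ and then in $t$:
\[
|[\varphi^\eps(t)]-[\varphi^\eps_0]|\le\frac{1}{|\Omega|}\Bigl(\int_0^T\!\!\int_\Omega P(\varphi^\eps)\,\d x\,\d\tau\Bigr)^{1/2}\Bigl(\int_0^T\|\sqrt{P(\varphi^\eps)}(\sigma^\eps-\mu^\eps)\|_2^2\,\d\tau\Bigr)^{1/2}.
\]
The second factor is $\le\mc E_0^{1/2}$ by \eqref{eq:EB}. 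For the first, \eqref{eq:endens} gives $\int_\Omega F(\varphi^\eps(t))\d x\le\eps E^\eps(t)\le\eps\mc E_0$, hence \eqref{eq:propF1} yields $\|\varphi^\eps(t)\|_p^p\le c_F^{-1}(\eps\mc E_0+C_F|\Omega|)$, and since $r\le p-2<p$ the growth bound \eqref{eq:propP1} gives $\int_\Omega P(\varphi^\eps(t))\d x\le\frac{2C_P(c_F+C_F)}{c_F}|\Omega|+\frac{2C_P\mc E_0}{c_F}\eps$. Collecting these and recalling \eqref{eq:inmassbound},
\[
|[\varphi^\eps(t)]|\le c_0+\Bigl(\frac{2TC_P(c_F+C_F)}{|\Omega|\,c_F}\,\mc E_0\Bigr)^{1/2}+o_\eps(1),
\]
and the $\eps\to0$ limit of the right-hand side is strictly below $1$ precisely by the threshold \eqref{eq:ass1}; hence $|[\varphi^\eps(t)]|\le c_1$ for some $c_1<1$ once $\eps$ is small along the chosen subsequence.

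It remains to pass from $[0,\widetilde T]$ to $[0,T]$ in (IV)--(V). Here I would revisit the estimate for the spatial average $[\mu^\eps(t)]=\eps^{-1}[F'(\varphi^\eps(t))]$ (the identity following from the Neumann condition in \eqref{eq:diffusemodel2}) that was used in the proof of (IV): it degenerates only through a factor that blows up as $|[\varphi^\eps(t)]|\to1$ and which is controlled by $(1-|[\varphi^\eps(t)]|)^{-1}$, now $\le(1-c_1)^{-1}$ uniformly on all of $[0,T]$. Together with the control of $\|\nabla\mu^\eps\|_{L^2(Q)}$ from \eqref{eq:EB}, this bounds $\mu^\eps$ in $L^2(0,T;H^1(\Omega))$ uniformly in $\eps$, so $\mu^\eps\rightharpoonup\mu\in L^2(0,T;H^1(\Omega))$ and every compactness argument and every identity used to construct $V$, $\lambda$, $\lambda_{i,j}$ and to verify (a)--(f) of Definition~\ref{def:varifoldsol} on $[0,\widetilde T]$ goes through unchanged on $[0,T]$; equivalently, one lets $\widetilde T\nearrow T$ and uses these uniform global bounds to pass to the limit.

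I expect the main obstacle to be Case \eqref{eq:ass1}: obtaining the uniform bound on $\int_0^T\!\int_\Omega P(\varphi^\eps)$ with the correct constant relies on the fact that $\|\varphi^\eps(t)\|_p^p$ stays $O(1)$ (not $o(1)$, because of the $-C_F$ term in \eqref{eq:propF1}) combined with the growth $r\le p-2$, and then on the sharp bookkeeping needed to land exactly on the threshold \eqref{eq:ass1}. A second, more conceptual, subtlety is that the upgrade to the closed interval is not automatic from $T_{\rm cr}=T$: it genuinely uses the newly available uniform lower bound $1-|[\varphi^\eps(t)]|\ge1-c_1>0$ on $[0,T]$ to control the average of $\mu^\eps$ up to $t=T$.
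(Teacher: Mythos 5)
Your treatment of the final claim is correct and coincides with the paper's own argument (Lemma~\ref{lemma:boundphi}): under \eqref{eq:ass2} you use mass conservation \eqref{eq:massconservation} together with the $L^2$ bound on $\sigma^\eps$ coming from \eqref{eq:EB}; under \eqref{eq:ass1} you integrate $\frac{\d}{\d t}[\varphi^\eps]$, apply Cauchy--Schwarz against the dissipation term $\|\sqrt{P(\varphi^\eps)}(\sigma^\eps-\mu^\eps)\|_{L^2(0,T;L^2(\Omega))}\le\sqrt{\mc E_0}$, and bound $\int_Q P(\varphi^\eps)$ via \eqref{eq:propP1}, $r\le p-2$ and the uniform $L^p$ bound from \eqref{eq:propF1}; your constants land exactly on the thresholds, as in the paper, and the deduction $T_{\rm cr}=T$ from $|[\varphi(t)]|\le c_1<1$ is the same.

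However, there is a genuine gap in scope: parts (I)--(V) are conclusions of the theorem, not hypotheses, and your proposal does not prove them. The paper's proof of these occupies all of Section~\ref{sec:sharpinterfaceP}: the $C^{1/16}$-in-time compactness of $\varphi^\eps$ via the auxiliary function $w^\eps=W(\varphi^\eps)$ (Lemma~\ref{lemma:boundw}), which is where the source term $R_1$ must be handled differently from the pure Cahn--Hilliard case; the Aubin--Lions argument for $\sigma^\eps$ (Lemma~\ref{lemma:AL}); the identification of the weak limit of $P(\varphi^\eps)(\sigma^\eps-\mu^\eps)$ (Lemma~\ref{lemma:convP}); and, above all, the vanishing of the positive part of the discrepancy \eqref{eq:discvanish}, which is what forces $0\le(\lambda_{i,j})_{d\times d}\le\lambda I$ and permits the construction of the varifold $V$ and the verification of conditions (c), (e), (f) of Definition~\ref{def:varifoldsol}. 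None of this is sketched. Moreover, the one ingredient of (IV) you do touch --- the control of $[\mu^\eps(t)]$ --- is attributed to the identity $[\mu^\eps(t)]=\eps^{-1}[F'(\varphi^\eps(t))]$, which by itself yields nothing: one would have to show $[F'(\varphi^\eps(t))]=O(\eps)$, which is essentially the statement to be proved. The actual mechanism (Lemma~\ref{lemma:boundmu}, following Chen) tests the equation for $\mu^\eps$ against $\nabla\psi\cdot\nabla\varphi^\eps$ with $\psi$ solving a Neumann problem whose datum is a mollification of $\varphi^\eps-[\varphi^\eps]$; the denominator in the resulting formula \eqref{eq:avmu} is bounded below by $|\Omega|(1-m_0^2)-C(\sqrt{\eta}+\sqrt{\eps})$, and this is the only place where the mass bound $|[\varphi^\eps(t)]|\le m_0<1$ enters. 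Your closing observation --- that the uniform bound $1-|[\varphi^\eps(t)]|\ge 1-c_1>0$ on all of $[0,T]$ is precisely what upgrades the varifold solution to the whole interval --- is correct, but it presupposes all of the machinery above.
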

	
	\begin{rmk}
		We observe that hypotheses \eqref{eq:ass1} and \eqref{eq:ass2} consist in smallness conditions for the initial energy $\mc E_0$ or for the time horizon $T$. As it will be clear in Lemma~\ref{lemma:boundphi}, they are sufficient assumptions in order to prevent the phase variable $\varphi^\eps$ to converge to a pure region in the whole time interval $[0,T]$ (see also \eqref{eq:boundmass}). This explains why in these particular cases it is reasonable to expect $T_{\rm cr}=T$.
	\end{rmk}

	\begin{thm}[\textbf{Sharp interface limit for Problem H}]\label{thm:mainH}
			Assume that the potential $F$ satisfies \eqref{hyp:F}, \eqref{hyp:F3} and that the interpolation function $H$ satisfies \eqref{hyp:H} and \eqref{hyp:Htechnical}. Let $(\varphi^\eps,\sigma^\eps)$ be the unique solution to Problem H given by Theorem~\ref{thm:existenceH} with initial data $(\varphi^\eps_0,\sigma^\eps_0)\in H^3(\Omega)\times H^1(\Omega)$ with $\partial_n\varphi^\eps_0=0$ in $\partial \Omega$ and $0\le \sigma^\eps_0\le 1$ almost everywhere in $\Omega$ satisfying \eqref{eq:hypinitial}. Then, up to passing to a non relabelled subsequence, the following facts hold:
			\begin{itemize}
				
				\item[(I)] there exists a measurable set $Q^C\subseteq Q$ for which
				\begin{align*}
					&\bullet\, \varphi^\eps\xrightarrow[\eps\to 0]{}-1+2\chi_{Q^C}=:\varphi,\qquad\text{in $C([0,T];L^2(\Omega))$};\\
					&\bullet\, \varphi^\eps(t)\xrightarrow[\eps\to 0]{}\varphi(t),\qquad\text{in $L^q(\Omega)$ for all $q\in [1,p)$ and $t\in [0,T]$;}\\
					&\bullet\, \chi_{Q^C}\in C^\frac 18([0,T];L^1(\Omega))\cap B([0,T];BV(\Omega));
				\end{align*}
				\item[(II)] there exists a function $\sigma\in L^2(0,T;H^1(\Omega))$ with $0\le \sigma\le 1$ a.e. in $Q$ for which
				\begin{align*}
					&\bullet\, \sigma^\eps \xrightharpoonup[\eps\to 0]{}\sigma,\quad\text{in }L^2(0,T;H^1(\Omega));\\
					&\bullet\, \sigma^\eps \xrightarrow[\eps\to 0]{}\sigma,\quad \text{in $L^q(Q)$ for all $q\ge 1$;}
				\end{align*}
				\item[(III)] there exist Radon measures $\lambda, \lambda_{i,j}$ on $\overline{Q}$, for $i,j=1,\dots,d$, for which
				\begin{align*}
					&\bullet\,e^\eps(\varphi^\eps)\d t\d x\xrightharpoonup[\eps\to 0]{\ast}\lambda,\qquad\qquad\text{in the sense of measures in $\overline{Q}$};\\
					&\bullet\, \eps\partial_i\varphi^\eps\partial_j\varphi^\eps\d t\d x\xrightharpoonup[\eps\to 0]{\ast}\lambda_{i,j},\quad\,\,\,\text{in the sense of measures in $\overline{Q}$}.
				\end{align*}
			\end{itemize}
			Moreover, defining the critical time as
			\begin{equation}\label{eq:TmaxH}
				T_{\rm cr}:=\sup\{t\in (0,T]:\, 0<|Q^C_t|\},
			\end{equation}
			there hold:
			\begin{itemize}
				\item[(IV)] there exists a function $\mu\in L^2_{\rm loc}([0,T_{\rm cr});H^1(\Omega))$ for which
				\begin{align*}
					&\bullet\, \mu^\eps \xrightharpoonup[\eps\to 0]{}\mu,\quad\text{in }L^2(0,\widetilde{T};H^1(\Omega))\text{ for all $\widetilde{T}\in (0,T_{\rm cr})$};
				\end{align*}
				\item[(V)] there exists a Radon measure $V$ on $(0,T_{\rm cr})\times \Omega\times \mc P$ such that for any $\widetilde{T}\in (0,T_{\rm cr})$ the quadruple $(Q^C, \sigma, \mu,V)$ is a varifold solution to \eqref{eq:sharpinterfacemodel}--\eqref{eq:problemH} in $[0,\widetilde{T}]$ with initial data $(Q^C_0,\sigma_0)$, where $\sigma_0$ is a weak limit of $\sigma^\eps_0$, fulfilling in addition:
				\begin{align*}
					\bullet\,& \d\lambda(t,x)=\d\lambda^t(x)\d t;\\
					\bullet\,& \int_0^{\widetilde{T}}\langle\delta V^\tau,Y(\tau)\rangle\d\tau= \int_0^{\widetilde{T}}\int_\Omega\nabla Y(\tau,x):\left[\d\lambda(\tau,x)I-(\d\lambda_{i,j}(\tau,x))_{d\times d}\right],\\
					&\text{for all }Y\in C^1_0([0,\widetilde T]\times \Omega;\R^d).
				\end{align*}
			\end{itemize}	
		\end{thm}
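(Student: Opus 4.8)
\noindent\emph{Proof strategy.}
The scheme is the one of Theorem~\ref{thm:main} for Problem~P, which itself adapts Chen's varifold construction for the Cahn--Hilliard equation \cite{Chen} (see also \cite{MelRoc}): first derive $\eps$-uniform estimates, then pass to the limit identifying the phase field with $-1+2\chi_{Q^C}$ for a set of finite perimeter, recover the nutrient and --- on time intervals where the interface is genuinely present --- the chemical potential, and finally build the varifold $V^t$ by slicing the limit energy measure. The one structural novelty with respect to Problem~P is that the energy identity \eqref{eq:EBH} is \emph{not} a Lyapunov balance, because of the extra term $\int_0^t(\mu^\eps,(\sigma^\eps-1)H(\varphi^\eps))_2\,\d\tau$ on its right-hand side; this is exactly where the technical hypothesis \eqref{hyp:Htechnical} and a Gr\"onwall argument enter, replacing the energy monotonicity used for Problem~P.

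\textbf{Step 1: $\eps$-uniform bounds.} The plan is to control the offending term by writing $\mu^\eps=-\eps\Delta\varphi^\eps+\frac1\eps F'(\varphi^\eps)$ and splitting it. Integrating the Laplacian contribution by parts (legitimate since $\varphi^\eps\in L^\infty(0,T;H^2(\Omega))$ with $\partial_n\varphi^\eps=0$) and using $0\le H\le1$, $0\le\sigma^\eps\le1$, the Lipschitz continuity of $H$ and Young's inequality bounds $-\eps(\Delta\varphi^\eps,(\sigma^\eps-1)H(\varphi^\eps))_2$ by $CE^\eps(\tau)+\frac12\|\nabla\sigma^\eps(\tau)\|_2^2$. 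For the remaining part $\frac1\eps(F'(\varphi^\eps),(\sigma^\eps-1)H(\varphi^\eps))_2$, I would split $\Omega$ according to the sign of $F'(\varphi^\eps)$: where $F'(\varphi^\eps)\ge0$ the integrand $F'(\varphi^\eps)(\sigma^\eps-1)H(\varphi^\eps)$ is $\le0$, since $\sigma^\eps-1\le0$ and $H\ge0$; where $F'(\varphi^\eps)<0$, assumption \eqref{hyp:Htechnical} gives $|F'(\varphi^\eps)|\,H(\varphi^\eps)\le C F(\varphi^\eps)$, so this contribution is at most $\frac C\eps\int_\Omega F(\varphi^\eps)\le CE^\eps(\tau)$. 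Altogether $(\mu^\eps,(\sigma^\eps-1)H(\varphi^\eps))_2\le CE^\eps(\tau)+\frac12\|\nabla\sigma^\eps(\tau)\|_2^2$; inserting this into \eqref{eq:EBH}, absorbing half of the $\nabla\sigma^\eps$-dissipation and using \eqref{eq:inenergybound}, Gr\"onwall's lemma yields $\sup_{[0,T]}\big(E^\eps(t)+\frac12\|\sigma^\eps(t)\|_2^2\big)\le\mc E_0 e^{CT}$, together with uniform $L^2(Q)$-bounds on $\nabla\mu^\eps$, $\nabla\sigma^\eps$ and $\sqrt{H(\varphi^\eps)}\sigma^\eps$. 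In particular $\frac1\eps F(\varphi^\eps)$ is bounded in $L^\infty(0,T;L^1)$, hence by \eqref{eq:propF1} $\varphi^\eps$ is bounded in $L^\infty(0,T;L^p)$, and, setting $W(u):=\int_0^u\sqrt{2F(s)}\,\d s$, the pointwise inequality $|\nabla W(\varphi^\eps)|\le e^\eps(\varphi^\eps)$ makes $W(\varphi^\eps)$ bounded in $L^\infty(0,T;W^{1,1}(\Omega))$.

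\textbf{Steps 2--4: compactness, the set $Q^C$, the degenerate time.} From \eqref{eq:weakeqH}$_1$, $\dot\varphi^\eps=\Delta\mu^\eps+(\sigma^\eps-1)H(\varphi^\eps)$ is bounded in $L^2(0,T;(H^1(\Omega))^*)$ --- here the source is bounded in $L^\infty(Q)$, so no control on $[\mu^\eps]$ is needed and the bound holds on all of $[0,T]$. Interpolating the consequent uniform $C^{1/2}([0,T];(H^1)^*)$-bound against the $B([0,T];BV(\Omega))$-bound on $W(\varphi^\eps)$ (equivalently the perimeter bound $\mathrm{Per}(Q^C_t)\le C$ from Modica--Mortola), as in \cite{Chen,MelRoc}, gives the $C^{1/8}([0,T];L^1)$-bound; together with the $L^\infty(0,T;L^p)$-bound and $\frac1\eps F(\varphi^\eps)$ bounded one obtains, up to a subsequence, $\varphi^\eps\to\varphi=-1+2\chi_{Q^C}$ in $C([0,T];L^2)$ and $\varphi^\eps(t)\to\varphi(t)$ in $L^q(\Omega)$, $q<p$, for every $t$, with $\chi_{Q^C}\in C^{1/8}([0,T];L^1)\cap B([0,T];BV)$ --- this is (I); assertion (III) follows because $e^\eps(\varphi^\eps)\,\d t\,\d x$ and $\eps\partial_i\varphi^\eps\partial_j\varphi^\eps\,\d t\,\d x$ have total mass $\le C\int_0^T E^\eps\le CT$ on $\overline Q$. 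For the nutrient, Aubin--Lions ($\sigma^\eps$ bounded in $L^2(0,T;H^1)$, $\dot\sigma^\eps$ bounded in $L^2(0,T;(H^1)^*)$ since $\sigma^\eps H(\varphi^\eps)\in L^\infty(Q)$) and $0\le\sigma^\eps\le1$ give $\sigma^\eps\to\sigma$ in $L^q(Q)$ for all $q$, $\sigma^\eps\rightharpoonup\sigma$ in $L^2(0,T;H^1)$, $0\le\sigma\le1$ --- this is (II). By Remark~\ref{rmk:Lip}, $t\mapsto[\varphi^\eps(t)]$ is nonincreasing, hence so is $t\mapsto|Q^C_t|$; therefore $\{t:|Q^C_t|>0\}$ is an interval of the form $[0,T_{\rm cr})$ (or $[0,T_{\rm cr}]$), and \eqref{eq:inmassbound} forces $|Q^C_0|<|\Omega|$, so that for every $\widetilde T<T_{\rm cr}$ one has $0<m_0\le|Q^C_t|\le M_0<|\Omega|$ on $[0,\widetilde T]$. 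On such an interval --- the interface being genuinely present --- one recovers, adapting the argument of \cite{Chen}, a uniform bound for $[\mu^\eps]$ in $L^2(0,\widetilde T)$ (the point being that both pure phases occupy a nontrivial fraction of $\Omega$, which forces $\frac1\eps\int_\Omega F'(\varphi^\eps)$ to stay bounded); combined with Poincar\'e--Wirtinger this gives $\mu^\eps\rightharpoonup\mu$ in $L^2(0,\widetilde T;H^1(\Omega))$, that is (IV).

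\textbf{Steps 5--7: limit equations, varifold, energy inequality.} Fix $\widetilde T<T_{\rm cr}$. Passing to the limit in the time-integrated \eqref{eq:weakeqH} against test functions with $\Phi(T)=\Psi(T)=0$ --- using $\varphi^\eps\to\varphi=-1+2\chi_{Q^C}$ and $\sigma^\eps\to\sigma$ strongly, $H(\varphi^\eps)\to H(\varphi)$ strongly, $\nabla\mu^\eps\rightharpoonup\nabla\mu$, $\nabla\sigma^\eps\rightharpoonup\nabla\sigma$, and $\varphi^\eps_0\rightharpoonup-1+2\chi_{\Omega_0}$, $\sigma^\eps_0\rightharpoonup\sigma_0$ --- gives the two identities in Definition~\ref{def:varifoldsol}(d) for \eqref{eq:problemH}. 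The varifold $V^t$ is then built exactly as in \cite{Chen,MelRoc}: $E^\eps$ is bounded in $L^\infty(0,T)$ (so the limit measure $\lambda$ has no time atoms) and, on $[0,\widetilde T]$, in $BV(0,\widetilde T)$ (via \eqref{eq:EBH} together with the bound on $\mu^\eps$ and $\dot\sigma^\eps\in L^2(Q)$), hence $\d\lambda=\d\lambda^t\,\d t$ and $\d\lambda_{i,j}=\d\lambda^t_{i,j}\,\d t$ with $e^\eps(\varphi^\eps(t))\d x\rightharpoonup\lambda^t$ and $\eps\partial_i\varphi^\eps\partial_j\varphi^\eps\d x\rightharpoonup\lambda^t_{i,j}$ for a.e.\ $t$; the coefficients $c_i^t,p_i^t$ are produced from the matrix measure $(\lambda^t\delta_{i,j}-\lambda^t_{i,j})_{d\times d}\le 2\lambda^t\,I$ as in \cite{Chen} so as to satisfy \eqref{eq:coefficients}, while \eqref{eq:density} follows from the slicewise Modica--Mortola lower bound $|D\chi_{Q^C_t}|\le\frac1{2\theta}\lambda^t$. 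Testing $\mu^\eps=-\eps\Delta\varphi^\eps+\frac1\eps F'(\varphi^\eps)$ against $\nabla\varphi^\eps\cdot Y$ and using the identity $\mu^\eps\nabla\varphi^\eps=\div\big(e^\eps(\varphi^\eps)I-\eps\nabla\varphi^\eps\otimes\nabla\varphi^\eps\big)$ gives the diffuse first variation $\int_\Omega\mu^\eps\nabla\varphi^\eps\cdot Y=\int_\Omega(\eps\nabla\varphi^\eps\otimes\nabla\varphi^\eps-e^\eps(\varphi^\eps)I):\nabla Y$; in the limit the left-hand side converges (rewriting $\nabla\varphi^\eps\cdot Y=\div(\varphi^\eps Y)-\varphi^\eps\div Y$, then using $\nabla\mu^\eps\rightharpoonup\nabla\mu$ and $\varphi^\eps Y\to\varphi Y$ strongly) to $-2\int_{Q^C_t}\div(\mu Y)$, while the right-hand side converges to $\int_\Omega\nabla Y:\big((\d\lambda^t_{i,j})-\d\lambda^t I\big)=-\langle\delta V^t,Y\rangle$; matching the two yields both (e) and the last bullet of (V). Finally the energy inequality (f)-2 is obtained by taking $\liminf_\eps$ in \eqref{eq:EBH} written between $s$ and $t$, using $\lambda^t(\overline\Omega)\le\liminf E^\eps(t)$, weak lower semicontinuity of the $L^2(Q)$-norms of $\nabla\mu,\nabla\sigma$, strong convergence of $\sqrt{H(\varphi^\eps)}\sigma^\eps$, and the weak--strong pairing $(\mu^\eps,(\sigma^\eps-1)H(\varphi^\eps))_2$ on the right-hand side. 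This proves that $(Q^C,\sigma,\mu,V)$ is a varifold solution on every $[0,\widetilde T]$ with $\widetilde T<T_{\rm cr}$, which is (V). The two places I expect to be most delicate are the Gr\"onwall closure of Step~1 --- where \eqref{hyp:Htechnical} (hence $H(\pm1)=0$) is precisely what kills the sign-indefinite potential term --- and the non-global bound on $[\mu^\eps]$, which is exactly why the conclusion holds only up to $T_{\rm cr}$.
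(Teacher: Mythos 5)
Your proposal is correct and, in its substance, identical to the paper's proof: the control of the sign-indefinite term $\tfrac1\eps\big(F'(\varphi^\eps),(\sigma^\eps-1)H(\varphi^\eps)\big)_2$ by splitting according to the sign of $F'(\varphi^\eps)$ (nonpositive where $F'\ge0$, bounded by $CE^\eps$ via \eqref{hyp:Htechnical} where $F'<0$), together with the integration by parts of the $-\eps\Delta\varphi^\eps$ contribution and a Gr\"onwall closure, is exactly Lemma~\ref{lemma:technical} and Lemma~\ref{lemma:enboundH}; the compactness, the passage to the limit in \eqref{eq:weakeqH}, and the varifold construction via the stress identity $\mu^\eps\nabla\varphi^\eps=\div(e^\eps(\varphi^\eps)I-\eps\nabla\varphi^\eps\otimes\nabla\varphi^\eps)$ and the vanishing discrepancy are recycled from Problem P just as in the paper. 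The one localized place where you take a different (and legitimate) route is the extension of the $[\mu^\eps]$-bound up to $T_{\rm cr}$: you argue on the limit, using that $t\mapsto|Q^C_t|$ is nonincreasing (from Remark~\ref{rmk:Lip}) and $|Q^C_0|<|\Omega|$ (from \eqref{eq:inmassbound}) to get \eqref{eq:boundmass} on every $[0,\widetilde T]$ with $\widetilde T<T_{\rm cr}$, and then transfer it to $\varphi^\eps$ by the uniform convergence of the mass; the paper instead first proves the bound on the a priori window $[0,1-c_0)$ via the 1-Lipschitz property of $[\varphi^\eps(\cdot)]$ (Lemma~\ref{lemma:boundmuH}) and then iterates this window until either the pure phase is reached or $T$ is exhausted. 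Your version is more direct and mirrors what the paper itself does for Problem P; the paper's iteration buys a bound on $\mu^\eps$ on an explicit initial interval before any limit object is invoked. Two cosmetic points: make explicit the mollification step (Chen's $J^\eps_\eta$ argument) behind your ``interpolation'' of the $(H^1)^*$-time regularity against the $BV$-bound on $W(\varphi^\eps)$, since $W$ is nonlinear and the two bounds cannot be interpolated verbatim; and note that the coefficient of $\|\nabla\sigma^\eps\|_2^2$ in your Step 1 can be taken $\eps/2$ rather than $1/2$, though either is absorbable.
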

		\begin{rmk}
			Since assumption \eqref{hyp:Htechnical} implies $H(\pm 1)=0$, and since the limit phase $\varphi$ takes value in $\{+1,-1\}$, we observe that the limit evolution of Problem H solves a decoupled system, meaning that $(Q^C, \mu,V)$ is a varifold solution to the Mullins-Sekerka flow \cite{Chen}, while $\sigma$ is a weak solution to the homogeneous reaction-diffusion equation.
		\end{rmk}
	
	\section{Sharp interface limit for Problem P}\label{sec:sharpinterfaceP}
	
	This section is devoted to the proof of Theorem~\ref{thm:main}, so we tacitly assume all its assumptions. 
	
	In Section~\ref{subsec:boundP} we first provide simple uniform estimates which immediately derive from the energy balance \eqref{eq:EB}. Stronger bounds are then needed in order to overcome the nonlinearities given by $F$ and $P$; we prove them in Lemmas~\ref{lemma:boundw} and \ref{lemma:AL}. A crucial step in the analysis consists in bounding the chemical potential $\mu^\eps$, since only its gradient can be directly controlled from the energy balance. This is done by using an argument by \cite{Chen}, which works (reasonably) as long as the phase variable $\varphi^\eps$ is bounded away from the pure phases $\varphi^\eps\equiv\pm 1$. Differently from the uncoupled Cahn-Hilliard equation \cite{Chen}, where the mass $[\varphi^\eps]$ is automatically conserved through the evolution, in Lemma~\ref{lemma:boundphi} we provide sufficient conditions which ensure a suitable control on the mass of the phase variable in our context.
	
	Such bounds are then used in Section~\ref{subsec:compP} to extract convergent subsequences for the phase variable, the nutrient and the chemical potential. In Section~\ref{subsec:proofP} we finally show that the limit functions actually form a varifold solution to problem \eqref{eq:sharpinterfacemodel}.
	
	\subsection{Uniform bounds}\label{subsec:boundP}
	
	As an immediate corollary of the energy balance \eqref{eq:EB} and properties \eqref{eq:propF1}, \eqref{eq:propF2} we obtain the first uniform bounds, which we keep explicit for future purposes (see Lemma~\ref{lemma:boundphi}):
	
	\begin{prop}\label{prop:unifboundP}
		The following estimates hold  true:
		\begin{itemize}
			\item[(i)] $\sup\limits_{t\in [0,T]}E^\eps(t)\le \mc E_0$;
			\item[(ii)] $\sup\limits_{t\in [0,T]}\||\varphi^\eps(t)|-1\|_{2}\le \sqrt{\mc E_0 \overline c_F^{-1}\eps}$;
			\item[(iii)] $\displaystyle\|\varphi^\eps\|_{B([0,T];L^p(\Omega))}\le \left(\frac{\mc E_0\eps+C_F|\Omega|}{c_F}\right)^\frac 1p$;
			\item[(iv)] $\|\sigma^\eps\|_{B([0,T];L^2(\Omega))}+\|\nabla\sigma^\eps\|_{L^2(0,T;L^2(\Omega))}\le 2\sqrt{\mc E_0}$;
			\item[(v)] $\|\nabla\mu^\eps\|_{L^2(0,T;L^2(\Omega))}\le \sqrt{\mc E_0}$;
			\item[(vi)] $\|\sqrt{P(\varphi^\eps)}(\sigma^\eps-\mu^\eps)\|_{L^2(0,T;L^2(\Omega))}\le \sqrt{\mc E_0}$.
		\end{itemize}
	\end{prop}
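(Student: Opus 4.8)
The plan is to read off all six estimates directly from the energy balance \eqref{eq:EB} together with the initial bound \eqref{eq:inenergybound}. The starting observation is that \emph{every} term on the left-hand side of \eqref{eq:EB} is nonnegative: the Ginzburg--Landau energy $E^\eps(t)$ is nonnegative because $F\ge 0$, the term $\frac12\|\sigma^\eps(t)\|_2^2$ is trivially nonnegative, and the three time-integrated dissipation terms are integrals of squares. Hence \eqref{eq:EB} and \eqref{eq:inenergybound} give, uniformly in $\eps$ and in $t\in[0,T]$, that each of these quantities is bounded by $\mc E_0=E^\eps(0)+\frac12\|\sigma^\eps_0\|_2^2$.

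First I would treat the bounds involving only the energy. Estimate $(i)$ is immediate since $E^\eps(t)\le E^\eps(0)+\frac12\|\sigma^\eps_0\|_2^2\le\mc E_0$. For $(ii)$ and $(iii)$ I would split $E^\eps(t)=\frac\eps2\|\nabla\varphi^\eps(t)\|_2^2+\frac1\eps\int_\Omega F(\varphi^\eps(t))\d x$ and discard the gradient part, so that $\frac1\eps\int_\Omega F(\varphi^\eps(t))\d x\le\mc E_0$. Inserting the lower bound \eqref{eq:propF2}, namely $F(u)\ge\overline c_F(|u|-1)^2$, yields $\||\varphi^\eps(t)|-1\|_2^2\le\eps\,\mc E_0\,\overline c_F^{-1}$, which is $(ii)$; inserting instead \eqref{eq:propF1}, namely $F(u)\ge c_F|u|^p-C_F$, yields $c_F\|\varphi^\eps(t)\|_p^p\le\eps\,\mc E_0+C_F|\Omega|$, which is $(iii)$. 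Both are uniform in $t$, so one passes to the supremum.

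The remaining estimates come from the kinetic and dissipative terms. From $\frac12\|\sigma^\eps(t)\|_2^2\le\mc E_0$ one obtains $\sup_{t}\|\sigma^\eps(t)\|_2\le\sqrt{2\mc E_0}$, while from $\int_0^t\|\nabla\sigma^\eps(\tau)\|_2^2\d\tau\le\mc E_0$ one obtains $\|\nabla\sigma^\eps\|_{L^2(0,T;L^2(\Omega))}\le\sqrt{\mc E_0}$; summing these gives $(iv)$. Likewise, discarding in \eqref{eq:EB} all terms but $\int_0^t\|\nabla\mu^\eps(\tau)\|_2^2\d\tau$ gives $(v)$, and discarding all but $\int_0^t\|\sqrt{P(\varphi^\eps(\tau))}(\sigma^\eps(\tau)-\mu^\eps(\tau))\|_2^2\d\tau$ gives $(vi)$.

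Since the statement is essentially a book-keeping corollary of \eqref{eq:EB}, there is no substantial obstacle; the only point requiring care is to keep the estimates \emph{explicit} in $\eps$ and $\mc E_0$, rather than absorbing everything into a generic constant, because items $(ii)$ and $(iii)$ will be reused in Lemma~\ref{lemma:boundphi} to produce a quantitative control on the mass $[\varphi^\eps]$.
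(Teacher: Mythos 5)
Your argument is correct and is exactly the paper's: Proposition~\ref{prop:unifboundP} is presented there as an immediate corollary of the energy balance \eqref{eq:EB} together with \eqref{eq:propF1}--\eqref{eq:propF2}, with no further proof given, and your write-up supplies precisely the intended book-keeping (including keeping the constants explicit for later use). One pedantic caveat: summing $\sup_{t}\|\sigma^\eps(t)\|_2\le\sqrt{2\mc E_0}$ and $\|\nabla\sigma^\eps\|_{L^2(0,T;L^2(\Omega))}\le\sqrt{\mc E_0}$ yields the constant $1+\sqrt{2}>2$, so item $(iv)$ as literally stated (a slip already present in the paper's statement) should carry $(1+\sqrt{2})\sqrt{\mc E_0}$; this is harmless, since only the two separate bounds are ever invoked later.
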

	
	In order to gain compactness for the phase variable $\varphi^\eps$ we argue as in \cite{Chen}, introducing the auxiliary functions
	\begin{equation}\label{eq:w}
		w^\eps(t,x):=W(\varphi^\eps(t,x)),
	\end{equation}
	where 
	\begin{equation}\label{eq:W}
		W(u):=\int_{-1}^u\sqrt{2\widetilde{F}(r)}\d r,
	\end{equation}
	and $\widetilde{F}$ is defined as $\widetilde{F}(u):=F(u)\wedge(\max\limits_{[-1,1]}F+u^2)$. We recall (see \cite{Chen} or \cite{MelRoc}) that the function $W$ satisfies
	\begin{equation}\label{eq:propW}
		c|u-v|^2\le |W(u)-W(v)|\le C|u-v|(1+|u|+|v|),\quad\text{for all }u,v\in\R,
	\end{equation}
	for some positive constants $C,c>0$.
	
	\begin{lemma}\label{lemma:boundw}
		The following uniform bounds for the functions $w^\eps$ and $\varphi^\eps$ hold:
		\begin{align}
			&\bullet\, \|\nabla w^\eps(t)\|_{1}\le E^\eps(t)\le \mc E_0,\quad\text{ for all }t\in [0,T];\label{eq:boundw}\\
			&\bullet \, \|w^\eps\|_{B([0,T];W^{1,1}(\Omega))}+\|w^\eps\|_{C^\frac{1}{16}([0,T];L^1(\Omega))}+\|\varphi^\eps\|_{C^\frac{1}{16}([0,T];L^2(\Omega))}\le C.\label{eq:boundholder}
		\end{align}
	\end{lemma}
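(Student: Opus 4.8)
The plan is to establish \eqref{eq:boundw} by a pointwise Young inequality, then the $L^1(\Omega)$ bound on $w^\eps$ from the at most quadratic growth of $W$, then an $\eps$-uniform equicontinuity estimate for $\varphi^\eps$ in the negative norm $(H^1(\Omega))^*$, and finally to upgrade the latter to the strong H\"older bounds in \eqref{eq:boundholder} by the interpolation mechanism of \cite{Chen, MelRoc}.

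For \eqref{eq:boundw}: since $W'(u)=\sqrt{2\widetilde F(u)}$ and $\widetilde F\le F$, one has the pointwise estimate
\[
|\nabla w^\eps|=\sqrt{2\widetilde F(\varphi^\eps)}\,|\nabla\varphi^\eps|\le\sqrt{\tfrac 2\eps F(\varphi^\eps)}\cdot\sqrt\eps\,|\nabla\varphi^\eps|\le\tfrac 1\eps F(\varphi^\eps)+\tfrac\eps 2|\nabla\varphi^\eps|^2=e^\eps(\varphi^\eps),
\]
the last inequality being Young's and the last equality \eqref{eq:endens}; integrating over $\Omega$ and invoking Proposition~\ref{prop:unifboundP}(i) gives $\|\nabla w^\eps(t)\|_1\le E^\eps(t)\le\mc E_0$. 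For the $L^1$ bound on $w^\eps$ itself, the upper bound in \eqref{eq:propW} with $v=-1$ (recall $W(-1)=0$) yields $|W(u)|\le C|u+1|(1+|u|)\le C(1+|u|^2)$, whence $\|w^\eps(t)\|_1\le C(|\Omega|+\|\varphi^\eps(t)\|_2^2)\le C$ by Proposition~\ref{prop:unifboundP}(iii) (since $p\ge 2$). Together with \eqref{eq:boundw} this gives $\|w^\eps\|_{B([0,T];W^{1,1}(\Omega))}\le C$.

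Next I would prove that $\dot\varphi^\eps$ is bounded in $L^2(0,T;(H^1(\Omega))^*)$ uniformly in $\eps$. Testing \eqref{eq:weakeq}$_1$ against $\Phi\in H^1(\Omega)$, writing $P(\varphi^\eps)(\sigma^\eps-\mu^\eps)=\sqrt{P(\varphi^\eps)}\cdot\bigl[\sqrt{P(\varphi^\eps)}(\sigma^\eps-\mu^\eps)\bigr]$ and applying H\"older with exponents $(3,2,6)$ together with $H^1(\Omega)\hookrightarrow L^6(\Omega)$ ($d\le 3$) leads to
\[
\|\dot\varphi^\eps(t)\|_{(H^1)^*}\le C\Bigl(\|\nabla\mu^\eps(t)\|_2+\|\sqrt{P(\varphi^\eps(t))}\|_3\,\|\sqrt{P(\varphi^\eps(t))}(\sigma^\eps(t){-}\mu^\eps(t))\|_2\Bigr).
\]
Since $\|\sqrt{P(\varphi^\eps(t))}\|_3^2=\|P(\varphi^\eps(t))\|_{3/2}\le C(1+\|\varphi^\eps(t)\|_p^r)$ by \eqref{eq:propP1} together with $\tfrac{3r}{2}\le\tfrac{3(p-2)}{2}\le p$ (as $p<6$), Proposition~\ref{prop:unifboundP}(iii),(v),(vi) yields the uniform bound; hence
\[
\|\varphi^\eps(t)-\varphi^\eps(s)\|_{(H^1)^*}\le\int_s^t\|\dot\varphi^\eps(\tau)\|_{(H^1)^*}\d\tau\le|t-s|^{1/2}\|\dot\varphi^\eps\|_{L^2(0,T;(H^1)^*)}\le C|t-s|^{1/2}
\]
uniformly in $\eps$.

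The remaining step — which I expect to be the main obstacle — is to upgrade this weak-norm equicontinuity to the strong bounds in \eqref{eq:boundholder}. The naive idea of differentiating $w^\eps=W(\varphi^\eps)$ in time and testing fails, because the commutator term $W''(\varphi^\eps)\nabla\varphi^\eps\cdot\nabla\mu^\eps$ is only controlled by $\|\nabla\varphi^\eps\|_2\,\|\nabla\mu^\eps\|_2$ and $\|\nabla\varphi^\eps(t)\|_2\le\sqrt{2\mc E_0/\eps}$ blows up; in fact $\varphi^\eps$ admits \emph{no} $\eps$-uniform Sobolev bound, only the $L^p$-bound of Proposition~\ref{prop:unifboundP}(iii), so the easy interpolation $L^2=[(H^1)^*,H^1]_{1/2}$ is also unavailable. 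Instead I would follow the argument of \cite{Chen} (see also \cite{MelRoc}): combining the $\eps$-uniform $W^{1,1}(\Omega)$-bound on $w^\eps$ with the $(H^1)^*$-equicontinuity of $\varphi^\eps$ and the structure \eqref{eq:propW} of $W$, a Gagliardo--Nirenberg type interpolation produces a modulus estimate $\|w^\eps(t)-w^\eps(s)\|_1\le C|t-s|^{\beta}$ for some $\beta>0$ independent of $\eps$. The lower bound in \eqref{eq:propW} then forces $c\|\varphi^\eps(t)-\varphi^\eps(s)\|_2^2\le\|w^\eps(t)-w^\eps(s)\|_1\le C|t-s|^\beta$, so $\|\varphi^\eps\|_{C^{\beta/2}([0,T];L^2(\Omega))}\le C$; collecting these estimates together with the $B([0,T];W^{1,1}(\Omega))$-bound above yields \eqref{eq:boundholder}.
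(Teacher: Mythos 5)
Your proposal follows essentially the same route as the paper: the pointwise Young inequality for \eqref{eq:boundw}, the quadratic growth of $W$ for the $L^1$-bound on $w^\eps$, and then the Chen-type duality/mollification argument for the H\"older moduli. Two remarks. First, your packaging of the duality step as a uniform bound on $\dot\varphi^\eps$ in $L^2(0,T;(H^1(\Omega))^*)$ is a clean equivalent of the paper's direct estimate of the pairing $J^\eps_\eta(t,s)=(\varphi^\eps_\eta(t)-\varphi^\eps_\eta(s),\varphi^\eps(t)-\varphi^\eps(s))_2$; your H\"older split $(3,2,6)$ is legitimate since $3r/2\le 3(p-2)/2\le p$ for $p<6$, exactly parallel to the paper's use of $rp/(p-2)\le p$. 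Second, the step you flag as the main obstacle is indeed where all the content lies, and you leave it as a citation: the mechanism is not really a Gagliardo--Nirenberg interpolation but the three mollification estimates \eqref{eq:etaa}--\eqref{eq:etac}, of which the decisive one is $\|\varphi^\eps_\eta(t)-\varphi^\eps(t)\|_2\le C\sqrt{\eta\,\|\nabla w^\eps(t)\|_1}$; this is the only place where the uniform $W^{1,1}$-bound on $w^\eps$ and the lower bound in \eqref{eq:propW} enter. Pairing $\varphi^\eps(t)-\varphi^\eps(s)$ against its mollification (whose $H^1$-norm is $O(1/\eta)$), one gets $\|\varphi^\eps(t)-\varphi^\eps(s)\|_2^2\le C(1+1/\eta)(t-s)^{1/2}+C\sqrt\eta$, and optimizing in $\eta$ yields the exponent $1/16$ for $\varphi^\eps$ directly; the $L^1$-modulus of $w^\eps$ then follows from the \emph{upper} bound in \eqref{eq:propW}. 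Note this is the reverse of the order you describe (you propose to obtain the $w^\eps$-modulus first and descend to $\varphi^\eps$ via the lower bound of $W$): since the evolution equation is for $\varphi^\eps$, the duality argument naturally produces the $\varphi^\eps$-modulus first, and there is no independent route to the $w^\eps$-modulus. With the mollification estimates supplied, your argument closes and coincides with the paper's.
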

	\begin{proof}
		We only sketch the proof, being similar to \cite[Lemma 3.2]{Chen}.
		
		Estimate \eqref{eq:boundw} directly follows from the pointwise inequality
		\begin{equation}\label{eq:pointw}
			|\nabla w^\eps(t,x)|=\sqrt{2\widetilde{F}(\varphi^\eps(t,x))}|\nabla\varphi^\eps(t,x)|\le e^\eps(\varphi^\eps(t,x)).
		\end{equation}
		As regards \eqref{eq:boundholder} we first observe that due to \eqref{eq:boundw} it is enough to show
		\begin{equation}\label{eq:thesis}
			\|w^\eps\|_{C^\frac{1}{16}([0,T];L^1(\Omega))}+\|\varphi^\eps\|_{C^\frac{1}{16}([0,T];L^2(\Omega))}\le C.
		\end{equation}
		
		So we fix $\eta_0\in (0,1)$ small enough, and for $\eta\in (0,\eta_0)$ we consider the function
		\begin{equation}\label{eq:mollified}
			\varphi^\eps_\eta(t,x):=\int_{B_1}\rho(y)\varphi^\eps(t,x-\eta y)\d y,
		\end{equation}
		where $B_1$ denotes the unit ball, $\rho$ is a standard mollifier supported in $B_1$, and $\varphi^\eps$ has been properly extended outside $\Omega$. By standard properties of convolution, and exploiting Proposition~\ref{prop:unifboundP} and \eqref{eq:boundw}, for every $t\in[0,T]$ one can show (see (3.4) in \cite{Chen} for details)
		\begin{subequations}
			\begin{align}
				&\bullet\, \|\varphi^\eps_\eta(t)\|_p\le C\|\varphi^\eps(t)\|_p\le C;\label{eq:etaa}\\
				&\bullet \|\nabla \varphi^\eps_\eta(t)\|_2\le \frac C\eta \|\varphi^\eps(t)\|_2\le \frac C\eta;\label{eq:etab}\\
				&\bullet \|\varphi^\eps_\eta(t)-\varphi^\eps(t)\|_2\le C\sqrt{\eta}\sqrt{\|\nabla w^\eps(t)\|_1}\le C\sqrt{\eta}.\label{eq:etac}
			\end{align}
		\end{subequations}
		
		We now fix $0\le s\le t\le T$ and using the equation \eqref{eq:weakeq}$_1$ together with Proposition~\ref{prop:unifboundP} and \eqref{eq:etab}  we estimate the quantity
		\begin{equation}\label{eq:boundJ}
			\begin{aligned}
				J^\eps_\eta(t,s):=&(\varphi^\eps_\eta(t)-\varphi^\eps_\eta(s),\varphi^\eps(t)-\varphi^\eps(s))_2=\int_s^t(\dot\varphi^\eps(\tau),\varphi^\eps_\eta(t)-\varphi^\eps_\eta(s))_2\d \tau\\
				=&{-}\int_{s}^{t}(\nabla\mu^\eps(\tau),\nabla\varphi^\eps_\eta(t){-}\nabla\varphi^\eps_\eta(s))_2\d\tau+\int_s^t(P(\varphi^\eps(\tau))(\sigma^\eps(\tau){-}\mu^\eps(\tau)),\varphi^\eps_\eta(t)-\varphi^\eps_\eta(s))_2\d\tau\\
				\le & \|\nabla\mu^\eps\|_{L^2(0,T;L^2(\Omega))}(\|\nabla\varphi^\eps_\eta(t)\|_2+\|\nabla\varphi^\eps_\eta(s)\|_2)(t-s)^\frac 12\\
				&+ \|\sqrt{P(\varphi^\eps)}(\sigma^\eps-\mu^\eps)\|_{L^2(0,T;L^2(\Omega))}\left(\int_s^t\int_\Omega P(\varphi^\eps(\tau))(\varphi^\eps_\eta(t)-\varphi^\eps_\eta(s))^2\d x\d\tau\right)^\frac 12\\
				\le & \frac C\eta (t-s)^\frac 12+C\left(\underbrace{\int_s^t\int_\Omega P(\varphi^\eps(\tau))(\varphi^\eps_\eta(t)-\varphi^\eps_\eta(s))^2\d x\d\tau}_{I^\eps_\eta(t,s)}\right)^\frac 12.
			\end{aligned}
		\end{equation}
		By H\"older inequality and the growth condition \eqref{eq:propP1} we deduce
		\begin{align*}
			I^\eps_\eta(t,s)\le& \int_s^t\|\varphi^\eps_\eta(t)-\varphi^\eps_\eta(s)\|_p^2\|P(\varphi^\eps(\tau))\|_{\frac{p}{p-2}}\d\tau\\
			\le& C \|\varphi^\eps_\eta(t)-\varphi^\eps_\eta(s)\|_p^2\int_s^t(1+\|\varphi^\eps(\tau)\|^r_{\frac{rp}{p-2}})\d\tau.
		\end{align*}
		Recalling that $rp/(p-2)\le p$ since $r\le p-2$ by assumption, by combining the above two chains of inequalities and using \eqref{eq:etaa} we obtain
		\begin{equation*}
			J^\eps_\eta(t,s)\le C\left(1+\frac 1\eta\right)(t-s)^\frac 12.
		\end{equation*}
		By using triangular inequality and exploiting \eqref{eq:etac} we now infer
		\begin{align*}
			\|\varphi^\eps(t)-\varphi^\eps(s)\|_2^2\le& C\left(1+\frac 1\eta\right)(t{-}s)^\frac 12+\|\varphi^\eps_\eta(t){-}\varphi^\eps(t)\|_2\vee \|\varphi^\eps_\eta(s){-}\varphi^\eps(s)\|_2(	\|\varphi^\eps(t)\|_2{+}\|\varphi^\eps(s)\|_2)\\
			\le & C\left(1+\frac 1\eta\right)(t-s)^\frac 12+C\sqrt{\eta}.
		\end{align*}
		By the arbitrariness of $\eta$, this easily yields $\|\varphi^\eps(t)-\varphi^\eps(s)\|_2\le C(t-s)^\frac{1}{16}$, whence 
		\begin{equation*}
			\|\varphi^\eps\|_{C^\frac{1}{16}([0,T];L^2(\Omega))}\le C.
		\end{equation*}
		
		Let us now focus on $w^\eps$. By \eqref{eq:propW} we first observe that for all $t\in[0,T]$ one has
		\begin{align*}
			\|w^\eps(t)\|_1&=\int_\Omega|W(\varphi^\eps(t))-W(-1)|\d x\le C\int_\Omega|\varphi^\eps(t)+1|(1+|\varphi^\eps(t)|)\d x\\
			&\le C(1+ \|\varphi^\eps(t)\|_2^2)\le C.
		\end{align*}
		Using again \eqref{eq:propW}, for $0\le s\le t\le T$ we then deduce
		\begin{align*}
			\|w^\eps(t)-w^\eps(s)\|_1\le& C\int_\Omega|\varphi^\eps(t)-\varphi^\eps(s)|(1+|\varphi^\eps(t)|+|\varphi^\eps(s)|)\d x\\
			\le &C \|\varphi^\eps(t)-\varphi^\eps(s)\|_2(1+\|\varphi^\eps(t)\|_2+\|\varphi^\eps(s)\|_2)\le C(t-s)^\frac{1}{16}.
		\end{align*}
		Hence \eqref{eq:thesis} is proved and we conclude.
	\end{proof}
	
	As regards the nutrient $\sigma^\eps$, notice that $(iv)$ in Proposition~\ref{prop:unifboundP} already provides weak compactness in $L^2(0,T;H^1(\Omega))$. However, for future purposes, we need to improve such compactness. To this aim, we exploit the following lemma.
	
	\begin{lemma}\label{lemma:AL}
		For any $\alpha>3$, the derivative $\dot\sigma^\eps$ is uniformly bounded in $L^2(0,T;(W^{1,\alpha}(\Omega))^*)$.
	\end{lemma}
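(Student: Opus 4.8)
The plan is to read the bound off directly from the weak formulation \eqref{eq:weakeq}$_2$, estimating its right-hand side term by term against a test function $\Psi\in W^{1,\alpha}(\Omega)$. First I would observe that since $\alpha>3\ge d$, Morrey's inequality gives the continuous embedding $W^{1,\alpha}(\Omega)\hookrightarrow L^\infty(\Omega)$; in particular $W^{1,\alpha}(\Omega)\subseteq H^1(\Omega)$, so \eqref{eq:weakeq}$_2$ may be tested with any such $\Psi$, and since $\dot\sigma^\eps(t)\in L^2(\Omega)\subseteq (W^{1,\alpha}(\Omega))^*$ for a.e.\ $t$ it reads
\begin{equation*}
	\langle\dot\sigma^\eps(t),\Psi\rangle_{W^{1,\alpha}(\Omega)}={-}(\nabla\sigma^\eps(t),\nabla\Psi)_{2}{-}(P(\varphi^\eps(t))(\sigma^\eps(t){-}\mu^\eps(t)),\Psi)_{2}.
\end{equation*}
The diffusion term is handled by plain Hölder's inequality, $|(\nabla\sigma^\eps(t),\nabla\Psi)_{2}|\le|\Omega|^{\frac12-\frac1\alpha}\|\nabla\sigma^\eps(t)\|_{2}\|\nabla\Psi\|_{\alpha}$, and $\|\nabla\sigma^\eps(t)\|_2$ is square-integrable in time uniformly in $\eps$ by Proposition~\ref{prop:unifboundP}(iv).

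The only delicate term is the reaction one, because from the energy balance we control $\|\nabla\mu^\eps\|_{L^2(Q)}$ and $\|\sqrt{P(\varphi^\eps)}(\sigma^\eps{-}\mu^\eps)\|_{L^2(Q)}$, but not $\mu^\eps$ itself. I would therefore \emph{not} separate $\sigma^\eps-\mu^\eps$ from the weight, but rather write $P=\sqrt P\cdot\sqrt P$ and use Cauchy--Schwarz to get
\begin{equation*}
	|(P(\varphi^\eps(t))(\sigma^\eps(t){-}\mu^\eps(t)),\Psi)_{2}|\le\|\sqrt{P(\varphi^\eps(t))}(\sigma^\eps(t){-}\mu^\eps(t))\|_{2}\,\|\sqrt{P(\varphi^\eps(t))}\Psi\|_{2}.
\end{equation*}
The last factor is then estimated uniformly in $t$ and $\eps$ via the growth bound \eqref{eq:propP1} together with Proposition~\ref{prop:unifboundP}(iii):
\begin{equation*}
	\|\sqrt{P(\varphi^\eps(t))}\Psi\|_{2}^2=\int_\Omega P(\varphi^\eps(t))\Psi^2\d x\le C_P\|\Psi\|_\infty^2\int_\Omega(1+|\varphi^\eps(t)|^r)\d x\le C\|\Psi\|_\infty^2\le C\|\Psi\|_{W^{1,\alpha}(\Omega)}^2,
\end{equation*}
where the middle inequality uses $r\le p$ to bound $\|\varphi^\eps(t)\|_r^r$ by $\|\varphi^\eps(t)\|_p^r\le C$, and the last step is the Morrey embedding $W^{1,\alpha}(\Omega)\hookrightarrow L^\infty(\Omega)$.

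Collecting the two estimates gives, for almost every $t$, $\|\dot\sigma^\eps(t)\|_{(W^{1,\alpha}(\Omega))^*}\le C\big(\|\nabla\sigma^\eps(t)\|_{2}+\|\sqrt{P(\varphi^\eps(t))}(\sigma^\eps(t){-}\mu^\eps(t))\|_{2}\big)$; squaring, integrating over $(0,T)$ and invoking Proposition~\ref{prop:unifboundP}(iv) and (vi) then yields $\|\dot\sigma^\eps\|_{L^2(0,T;(W^{1,\alpha}(\Omega))^*)}^2\le C\mc E_0$, uniformly in $\eps$. I do not expect a genuine obstacle here: the lemma is essentially a bookkeeping estimate, and the only real constraint is the one just highlighted — since the chemical potential cannot be bounded pointwise, the degenerate weight $\sqrt{P(\varphi^\eps)}$ must be absorbed into an $L^\infty$-norm of the test function, which is exactly why $\alpha$ has to exceed the spatial dimension (hence the requirement $\alpha>3$ covering $d\in\{2,3\}$).
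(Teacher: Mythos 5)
Your argument is correct and coincides with the paper's own proof: both test \eqref{eq:weakeq}$_2$ against $\Psi\in W^{1,\alpha}(\Omega)$, use the embedding $W^{1,\alpha}(\Omega)\hookrightarrow C(\overline\Omega)$ for $\alpha>3$, split $P=\sqrt P\cdot\sqrt P$ so as to pair $\sqrt{P(\varphi^\eps)}(\sigma^\eps-\mu^\eps)$ with $\sqrt{P(\varphi^\eps)}\Psi$, and control the latter via \eqref{eq:propP1} and Proposition~\ref{prop:unifboundP}(iii) before concluding with (iv) and (vi). Your added remark explaining \emph{why} the $L^\infty$-control of the test function is the essential point is a fair reading of the same mechanism.
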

	
	\begin{proof}
		Since $\alpha>3$, by Sobolev embedding we observe that $W^{1,\alpha}(\Omega)$ continuously embeds in $C(\overline{\Omega})$. 
		
		We now fix $\Psi\in W^{1,\alpha}(\Omega)$ and for almost every $t\in (0,T)$ we start estimating by using the equation \eqref{eq:weakeq}$_2$:
		\begin{align}\label{eq:1}
			|\langle\dot\sigma^\eps(t),\Psi\rangle_{W^{1,\alpha}(\Omega)}|\le \|\nabla\sigma^\eps(t)\|_2\|\nabla \Psi\|_2+\|\sqrt{P(\varphi^\eps(t))}(\sigma^\eps(t){-}\mu^\eps(t))\|_{2}\|\sqrt{P(\varphi^\eps(t))}\Psi\|_{2}.
		\end{align}
		Note that by \eqref{eq:propP1} there holds
		\begin{equation}\label{eq:2}
			\|\sqrt{P(\varphi^\eps(t))}\Psi\|_{2}\le C(1+\|\varphi^\eps(t)\|_r^\frac r2)\|\Psi\|_{C(\overline{\Omega})}\le C\|\Psi\|_{W^{1,\alpha}(\Omega)},
		\end{equation}
		where in the last inequality we exploited the assumption $r\le p-2\le p$ and $(iii)$ in Proposition~\ref{prop:unifboundP}.
		
		Since $\alpha>3$, estimates \eqref{eq:1} and \eqref{eq:2} finally yield
		\begin{equation*}
			|\langle\dot\sigma^\eps(t),\Psi\rangle_{W^{1,\alpha}(\Omega)}|\le C(\|\nabla\sigma^\eps(t)\|_2+\|\sqrt{P(\varphi^\eps(t))}(\sigma^\eps(t){-}\mu^\eps(t))\|_{2})\|\Psi\|_{W^{1,\alpha}(\Omega)},
		\end{equation*}
		and so we conclude by recalling $(iv)$ and $(v)$ in Proposition~\ref{prop:unifboundP}.
	\end{proof}
	
	In order to bound the chemical potential $\mu^\eps$ in $L^2(\Omega)$, by Poincar\'e-Wirtinger it is enough to estimate its average $[\mu^\eps]$. Arguing as in \cite[Lemma 3.4]{Chen}, this can be done whenever the mass of the phase $\varphi^\eps$ is bounded away from $1$ or $-1$, namely the pure phases.
	
	\begin{lemma}\label{lemma:boundmu}
		Assume that 
		\begin{equation}\label{eq:boundmass}
			|[\varphi^\eps(t)]|\le m_0<1,\qquad\text{for all }t\in [0,T].
		\end{equation}
		Then there exist $\eps_0\in (0,1)$ and a constant $C=C(m_0)>0$, which blows up as $m_0$ goes to $1$, such that
		\begin{equation}\label{eq:boundmu}
			|[\mu^\eps(t)]|\le C(E^\eps(t)+\|\nabla \mu^\eps(t)\|_2),\quad\text{for almost every }t\in (0,T)\text{ and for every }\eps\in (0,\eps_0).
		\end{equation}
		In particular, $\mu^\eps$ is uniformly bounded in $L^2(0,T;H^1(\Omega))$.
	\end{lemma}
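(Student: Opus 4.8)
The starting point is the elliptic relation $\mu^\eps(t)=-\eps\Delta\varphi^\eps(t)+\frac1\eps F'(\varphi^\eps(t))$, which holds in the strong sense for a.e.\ $t$ since $\varphi^\eps(t)\in H^3(\Omega)$ with $\partial_n\varphi^\eps(t)=0$. Testing it with the constant $1$ and using the boundary condition yields the key identity
\[
|\Omega|\,[\mu^\eps(t)]=\frac1\eps\int_\Omega F'(\varphi^\eps(t))\d x ,
\]
so by Poincar\'e--Wirtinger $\|\mu^\eps(t)\|_{H^1(\Omega)}\le C\bigl(|[\mu^\eps(t)]|+\|\nabla\mu^\eps(t)\|_2\bigr)$ and the whole issue is to bound $\tfrac1\eps\int_\Omega F'(\varphi^\eps)\d x$. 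The plan is to do this through a more flexible test function: testing the $\mu^\eps$-equation with $g(\varphi^\eps)$ for a bounded Lipschitz $g$ to be chosen, and splitting off the mean of $\mu^\eps$, one gets
\[
[\mu^\eps]\int_\Omega g(\varphi^\eps)\d x=\frac1\eps\int_\Omega F'(\varphi^\eps)g(\varphi^\eps)\d x+\eps\int_\Omega g'(\varphi^\eps)|\nabla\varphi^\eps|^2\d x-\bigl(\mu^\eps-[\mu^\eps],\,g(\varphi^\eps)\bigr)_2 .
\]

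The role of hypothesis \eqref{eq:boundmass} is to make the coefficient $\int_\Omega g(\varphi^\eps)\d x$ of $[\mu^\eps]$ nondegenerate. Writing $\int_\Omega(1\mp\varphi^\eps)\d x\ge(1-m_0)|\Omega|$ and decomposing according to the two bulk regions, the interface region $\{|\varphi^\eps|\le1-\delta_0\}$ — whose measure is $\le\eps E^\eps/c_{\delta_0}$ because $F\ge c_{\delta_0}>0$ there — and the ``overshoot'' part (controlled in $L^2$ by Proposition~\ref{prop:unifboundP}(ii) and \eqref{eq:propF2}), I would first show that for $\eps$ small at least one bulk phase, say $\{\varphi^\eps<-1+\delta_0\}$, has measure bounded below by a positive constant $c_1(m_0)$. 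Then I take $g$ with $0\le g\le1$ equal to $1$ on that bulk phase, so that $\int_\Omega g(\varphi^\eps)\d x\ge c_1(m_0)>0$; this is precisely why the final constant $C(m_0)$ blows up as $m_0\to1$. With such a $g$ the last two terms above are harmless: $\eps\int_\Omega g'(\varphi^\eps)|\nabla\varphi^\eps|^2\d x\le\|g'\|_\infty\,\eps\|\nabla\varphi^\eps\|_2^2\le CE^\eps$ and $|(\mu^\eps-[\mu^\eps],g(\varphi^\eps))_2|\le C_P\|g\|_\infty|\Omega|^{1/2}\,\|\nabla\mu^\eps(t)\|_2$, so everything reduces to the term $\tfrac1\eps\int_\Omega F'(\varphi^\eps)g(\varphi^\eps)\d x$.

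This last term is the main obstacle. On the interface region it is $\le CE^\eps$ as above; on $\{|\varphi^\eps|\ge2\}$ the growth bound $|F'(u)|\le C(1+|u|^{p-1})$ coming from \eqref{hyp:F2}, together with $\int_{\{|\varphi^\eps|>2\}}|\varphi^\eps|^p\le C\eps E^\eps$ (a consequence of \eqref{eq:propF1}--\eqref{eq:propF2}), again gives $\le CE^\eps$; the delicate contribution is the intermediate zone near $\pm1$. Here I would Taylor-expand $F'$ about the minima: choosing $g$ essentially of the form $1+\varphi^\eps$ (suitably truncated) on the distinguished bulk phase $\{\varphi^\eps\approx-1\}$ makes $F'(u)g(u)$ vanish to \emph{second} order at $u=-1$, so that it is pointwise dominated by $F$ and its integral by $\eps E^\eps$; the contribution from the region near the other pure phase (where $F'(u)g(u)$ vanishes only to first order), and from possible overshoots of $\varphi^\eps$ past $\pm1$, is exactly what requires the argument of \cite[Lemma~3.4]{Chen}: selecting via the coarea formula levels $\approx\pm1$ across which the flux $\eps\int_{\{\varphi^\eps=\mathrm{const}\}}|\nabla\varphi^\eps|\d\mathcal H^{d-1}$ is $\le CE^\eps$, feeding the elliptic relation $\tfrac1\eps F'(\varphi^\eps)=\mu^\eps+\eps\Delta\varphi^\eps$ back on the corresponding sublevel sets, and absorbing the leftover terms into $E^\eps$ and $\|\nabla\mu^\eps(t)\|_2$. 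Once $\tfrac1\eps\int_\Omega F'(\varphi^\eps)g(\varphi^\eps)\d x\le C(m_0)\bigl(E^\eps(t)+\|\nabla\mu^\eps(t)\|_2\bigr)$ is in hand, dividing by $\int_\Omega g(\varphi^\eps)\d x\ge c_1(m_0)$ gives \eqref{eq:boundmu}; squaring, integrating over $(0,T)$ and using Proposition~\ref{prop:unifboundP}(i),(v) then yields the asserted uniform bound of $\mu^\eps$ in $L^2(0,T;H^1(\Omega))$.
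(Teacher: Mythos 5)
There is a genuine gap at the heart of your argument, and it is exactly the difficulty that forces the paper (following \cite[Lemma 3.4]{Chen}) to use a different multiplier. In your identity
\[
[\mu^\eps]\int_\Omega g(\varphi^\eps)\d x=\frac1\eps\int_\Omega F'(\varphi^\eps)g(\varphi^\eps)\d x+\eps\int_\Omega g'(\varphi^\eps)|\nabla\varphi^\eps|^2\d x-\bigl(\mu^\eps-[\mu^\eps],\,g(\varphi^\eps)\bigr)_2 ,
\]
the two requirements you place on $g$ are incompatible. To keep the coefficient $\int_\Omega g(\varphi^\eps)\d x$ bounded below you need $g$ not to vanish on at least one bulk phase, say $g(-1)\ne 0$; but then near $u=-1$ one has $F'(u)g(u)\approx F''(-1)(u+1)g(-1)$, which vanishes only to \emph{first} order, and the only smallness available there is $\||\varphi^\eps|-1\|_2\le C\sqrt{\mc E_0\,\overline c_F^{-1}\eps}$ from Proposition~\ref{prop:unifboundP}(ii); this gives $\frac1\eps\int|F'(\varphi^\eps)g(\varphi^\eps)|\lesssim \eps^{-1/2}$ on that region, which diverges. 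Your proposed fix --- taking $g$ essentially equal to $1+\varphi^\eps$ near $-1$ so that $F'g$ vanishes to second order and is dominated by $F$ --- does make that term $\le CE^\eps$, but it simultaneously destroys the denominator: $\int_\Omega(1+\varphi^\eps)\d x\to 0$ on the phase $\{\varphi^\eps\approx-1\}$ as $\eps\to0$, so the coefficient of $[\mu^\eps]$ is no longer bounded away from zero there and you are back where you started. Your closing appeal to ``the argument of \cite[Lemma 3.4]{Chen}'' (coarea formula, level selection) does not repair this: that is not what Chen's lemma does, and as written it is a black box standing in for the missing step.

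The paper's proof avoids the problem entirely by using a Pohozaev-type (vector-field) multiplier rather than a scalar one: testing against $Y\cdot\nabla\varphi^\eps$ turns the dangerous term into $\frac1\eps\int_\Omega Y\cdot\nabla\bigl(F(\varphi^\eps)\bigr)\d x=-\frac1\eps\int_\Omega F(\varphi^\eps)\div Y\d x$ (plus boundary terms), which is controlled by $E^\eps$ with \emph{no} cancellation needed at the minima of $F$; this yields identity \eqref{eq:tensor}. Choosing $Y=\nabla\psi^\eps_\eta$ with $\Delta\psi^\eps_\eta=\varphi^\eps_\eta-[\varphi^\eps_\eta]$ then produces the quotient \eqref{eq:avmu}, whose denominator $\int_\Omega(\varphi^\eps_\eta-[\varphi^\eps_\eta])\varphi^\eps\d x$ is close to $|\Omega|(1-[\varphi^\eps]^2)\ge|\Omega|(1-m_0^2)$ by \eqref{eq:boundmass} and \eqref{eq:propF2} --- this is where the hypothesis enters and where $C(m_0)$ blows up. Your peripheral observations (the lower bound on the bulk-phase measures, the estimates for $g'$ and for $\mu^\eps-[\mu^\eps]$ via Poincar\'e--Wirtinger, the final integration in time) are fine, but the central estimate is not established.
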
 
	\begin{proof}
		Although the proof follows the lines of \cite[Lemma 3.4]{Chen}, we write it both for the sake of completeness and since some formulas will be used in the sequel. Notice that classic arguments like Miranville-Zelik inequality \cite{MirZelik}, which exploits an assumption of the form \eqref{hyp:F3}$_1$ (see also \cite{GilMirSchimp}), do not work here due to the presence of the constant $1/\eps$ in front of the derivative $F'$.
		
		Let $Y\in C^1(\overline{\Omega};\R^d)$ be a smooth vector field. We first observe that, exploiting the equation 
			\begin{equation*}
				\mu^\eps(t)=-\eps\Delta\varphi^\eps(t)+\frac 1\eps F'(\varphi^\eps(t)),\qquad \text{in }\Omega,
			\end{equation*}
			after integration by parts we deduce
			\begin{align*}
				{-}\eps\int_\Omega\nabla Y:(\nabla\varphi^\eps(t)\otimes\nabla\varphi^\eps(t))\d x&=\eps\int_\Omega Y\cdot \left(\nabla \left(\frac 12 |\nabla\varphi^\eps(t)|^2\right)+\nabla\varphi^\eps(t)\Delta \varphi^\eps(t)\right)\d x\\
				&=\int_\Omega Y\cdot \left(\nabla \Big(e^\eps(\varphi^\eps(t)\Big)-\mu^\eps(t)\nabla \varphi^\eps(t)\right)\d x,
			\end{align*}
			whence
			\begin{align*}
				&\qquad\int_\Omega\nabla Y:(e^\eps(\varphi^\eps(t))I-\eps \nabla\varphi^\eps(t)\otimes\nabla\varphi^\eps(t))\d x\\
				&=\int_\Omega \Big(e^\eps(\varphi^\eps(t))\div Y+Y\cdot \big(\nabla \Big(e^\eps(\varphi^\eps(t)\Big)-\mu^\eps(t)\nabla \varphi^\eps(t)\big)\Big) \d x\\
				&= \int_\Omega\div \Big(\big(e^\eps(\varphi^\eps(t))-\varphi^\eps(t)\mu^\eps(t)\big)Y\Big)\d x+\int_\Omega (\varphi^\eps(t)\mu^\eps(t)\div Y+ \varphi^\eps(t)\nabla \mu^\eps(t)\cdot Y) \d x.
			\end{align*}
			After a further integration by parts, and by summing and subtracting the terms containing $[\mu^\eps(t)]$, we then obtain
			\begin{equation}\label{eq:tensor}
				\begin{aligned}
					&\int_\Omega\nabla Y:(e^\eps(\varphi^\eps(t))I-\eps \nabla\varphi^\eps(t)\otimes\nabla\varphi^\eps(t))\d x-\int_{\partial\Omega}e^\eps(\varphi^\eps(t))Y\cdot n\d\mc H^{d-1}\\
					=& -\int_{\partial\Omega}\varphi^\eps(t)\mu^\eps(t)Y\cdot n\d\mc H^{d-1}+[\mu^\eps(t)]\int_\Omega\varphi^\eps(t)\div Y\d x+\int_\Omega Y\cdot\nabla\mu^\eps(t)\varphi^\eps(t)\d x\\
					&+\int_\Omega(\mu^\eps(t)-[\mu^\eps(t)])\varphi^\eps(t)\div Y\d x.
				\end{aligned}
			\end{equation}
		
		By choosing $Y=\nabla \psi$, for an arbitrary $\psi\in C^2(\overline{\Omega})$ with $\partial_n\psi=0$ on $\partial\Omega$, we finally deduce
		\begin{equation}\label{eq:avmu}
			\begin{aligned}
				[\mu^\eps(t)]=\frac{1}{\int_\Omega\Delta\psi\varphi^\eps(t)\d x}\Big(\int_\Omega\nabla^2\psi&:(e^\eps(\varphi^\eps(t))I-\eps\nabla\varphi^\eps(t)\otimes\nabla\varphi^\eps(t))\\
				&-\varphi^\eps(t)\nabla\mu^\eps(t)\cdot\nabla\psi-\varphi^\eps(t)(\mu^\eps(t)-[\mu^\eps(t)])\Delta\psi\d x\Big).
			\end{aligned}
		\end{equation}
		Fix then $\eta>0$ and consider the function $\varphi^\eps_\eta$ defined in \eqref{eq:mollified}. Let now $\psi^\eps_\eta(t)$ be the unique solution of the problem
		\begin{equation*}
			\begin{cases}
				\Delta\psi=\varphi^\eps_\eta(t)-[\varphi^\eps_\eta(t)],&\text{in }\Omega,\\
				\partial_n\psi=0,&\text{in }\partial\Omega,\\
				[\psi]=0.		
			\end{cases}
		\end{equation*}
		By elliptic regularity one deduces $\psi^\eps_\eta(t)\in C^2(\overline{\Omega})$ with
		\begin{equation*}
			\|\psi^\eps_\eta(t)\|_{C^2(\overline{\Omega})}\le C \|\varphi^\eps_\eta(t)\|_{C^1(\overline{\Omega})}\le \frac{C}{\eta^\beta},
		\end{equation*}
		for some $\beta>0$. The second inequality above follows by properties of convolutions, since there holds
		\begin{equation*}
			\|\varphi^\eps_\eta(t)\|_{C^1(\overline{\Omega})}\le \frac{C}{\eta^\beta}	\|\varphi^\eps(t)\|_{2}\le \frac{C}{\eta^\beta}.
		\end{equation*}
		Thus, by choosing $\psi=\psi^\eps_\eta(t)$, the numerator $N^\eps_\eta(t)$ in \eqref{eq:avmu} can be bounded by
		\begin{equation}\label{eq:N}
			\begin{aligned}
				|N^\eps_\eta(t)|&\le C\|\psi^\eps_\eta(t)\|_{C^2(\overline{\Omega})}\bigg(E^\eps(t)+\|\varphi^\eps(t)\|_{2}\|\nabla\mu^\eps(t)\|_{2}+\|\varphi^\eps(t)\|_{2}\|\mu^\eps(t)-[\mu^\eps(t)]\|_{2}\bigg)\\
				&\le \frac{C}{\eta^\beta}\big(E^\eps(t)+\|\nabla\mu^\eps(t)\|_{2}\big),
			\end{aligned}
		\end{equation}
		where in the last inequality we exploited Poincar\'e-Wirtinger inequality and the uniform bound of $\varphi^\eps$ in $L^2(\Omega)$. On the other hand the denominator $D^\eps_\eta(t)$ in \eqref{eq:avmu} can be written as follows:
		\begin{equation*}
			\begin{aligned}				D^\eps_\eta(t)=&\int_\Omega\Delta\psi^\eps_\eta(t)\varphi^\eps(t)\d x=\int_\Omega (\varphi^\eps_\eta(t)-[\varphi^\eps_\eta(t)]) \varphi^\eps(t)\d x= \int_\Omega(\varphi^\eps_\eta(t)-\varphi^\eps(t))\varphi^\eps(t)\d x\\
				&+\int_\Omega(\varphi^\eps(t)^2-1)\d x+|\Omega|(1-[\varphi^\eps(t)]^2)+|\Omega|[\varphi^\eps(t)]([\varphi^\eps(t)]-[\varphi^\eps_\eta(t)]).
			\end{aligned}
		\end{equation*}
		Observing that
		\begin{align*}
			&\bullet\, |[\varphi^\eps(t)]-[\varphi^\eps_\eta(t)]|\le C\|\varphi^\eps(t)-\varphi^\eps_\eta(t)\|_2\le C\sqrt{\eta};\\
			&\bullet\, |[\varphi^\eps(t)]|\le C\|\varphi^\eps(t)\|_2\le C;\\
			&\bullet\, \int_\Omega|\varphi^\eps(t)^2-1|\d x\le C(1+\|\varphi^\eps(t)\|_2)\||\varphi^\eps(t)|-1\|_2\le C\sqrt{\eps},
		\end{align*}
		by assumption \eqref{eq:boundmass} we now deduce
		\begin{equation}\label{eq:D}
			D^\eps_\eta(t)\ge |\Omega|(1-m_0^2)-C(\sqrt{\eta}+\sqrt{\eps}).
		\end{equation}
		We finally conclude combining \eqref{eq:N} and \eqref{eq:D}, by choosing $\eta$ sufficiently small and setting $\eps_0:=\eta$. By \eqref{eq:D} the constant in \eqref{eq:boundmu} clearly blows up when $m_0\to 1$.
	\end{proof}
	
	We now show that both conditions \eqref{eq:ass1} and \eqref{eq:ass2} ensure the validity of \eqref{eq:boundmass}.
	
	\begin{lemma}\label{lemma:boundphi}
		Assume that either \eqref{eq:ass1} or \eqref{eq:ass2} is in force. Then there exist $m_0\in [0,1)$ and $\eps_1\in (0,1]$ such that \eqref{eq:boundmass} holds true for all $\eps\in (0,\eps_1)$.
	\end{lemma}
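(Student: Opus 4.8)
The plan is to treat the two smallness hypotheses separately, in each case replacing the mass conservation available for the pure Cahn--Hilliard equation of \cite{Chen} (which makes \eqref{eq:boundmass} automatic) by a suitable substitute already encoded in our uniform bounds. Under \eqref{eq:ass2} this substitute is the conservation of the total mass $[\varphi^\eps+\sigma^\eps]$ recorded in \eqref{eq:massconservation}: it gives $[\varphi^\eps(t)]=[\varphi^\eps_0+\sigma^\eps_0]-[\sigma^\eps(t)]$, hence $|[\varphi^\eps(t)]|\le d_0+|[\sigma^\eps(t)]|$, and since $E^\eps\ge0$ the energy balance \eqref{eq:EB} yields $\tfrac12\|\sigma^\eps(t)\|_2^2\le\mc E_0$, so $|[\sigma^\eps(t)]|\le|\Omega|^{-1/2}\|\sigma^\eps(t)\|_2\le\sqrt{2\mc E_0/|\Omega|}$ by Hölder's inequality. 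The assumption $\mc E_0<\tfrac{|\Omega|}{2}(1-d_0)^2$ is precisely $\sqrt{2\mc E_0/|\Omega|}<1-d_0$, so \eqref{eq:boundmass} holds for every $\eps\in(0,1]$ with $m_0:=d_0+\sqrt{2\mc E_0/|\Omega|}\in[0,1)$.

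Under \eqref{eq:ass1} I would instead control the drift of $[\varphi^\eps]$ directly, the role of mass conservation being played now by the $L^2$-in-time bound $(vi)$ of Proposition~\ref{prop:unifboundP}. Testing \eqref{eq:weakeq}$_1$ with $\Phi\equiv1$ gives $\tfrac{\d}{\d t}[\varphi^\eps(t)]=\tfrac1{|\Omega|}(P(\varphi^\eps(t))(\sigma^\eps(t)-\mu^\eps(t)),1)_2$, so that, using $P\ge0$ and the Cauchy--Schwarz inequality,
\[
\Big|\tfrac{\d}{\d t}[\varphi^\eps(t)]\Big|\le\frac1{|\Omega|}\Big(\int_\Omega P(\varphi^\eps(t))\,\d x\Big)^{1/2}\big\|\sqrt{P(\varphi^\eps(t))}(\sigma^\eps(t)-\mu^\eps(t))\big\|_2 .
\]
The first factor I would bound by combining the growth \eqref{eq:propP1} of $P$ with $r\le p$, which gives $\int_\Omega P(\varphi^\eps(t))\,\d x\le C_P(2|\Omega|+\|\varphi^\eps(t)\|_p^p)$, and then with $(iii)$ of Proposition~\ref{prop:unifboundP}, yielding $\|\varphi^\eps(t)\|_p^p\le(\mc E_0\eps+C_F|\Omega|)/c_F$; choosing $\eps_1:=1\wedge(C_F|\Omega|/\mc E_0)$ one obtains, for all $\eps\in(0,\eps_1)$, the uniform bound $\int_\Omega P(\varphi^\eps(t))\,\d x\le 2C_P(c_F+C_F)|\Omega|/c_F=:C_\star$. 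Integrating the differential inequality in time, applying Cauchy--Schwarz in $\tau$ and then $(vi)$ of Proposition~\ref{prop:unifboundP}, and using $|[\varphi^\eps_0]|\le c_0$ from \eqref{eq:inmassbound}, I would reach
\[
|[\varphi^\eps(t)]|\le c_0+\frac{\sqrt{C_\star}}{|\Omega|}\Big(T\int_0^T\big\|\sqrt{P(\varphi^\eps(\tau))}(\sigma^\eps(\tau)-\mu^\eps(\tau))\big\|_2^2\,\d\tau\Big)^{1/2}\le c_0+\frac{\sqrt{C_\star T\mc E_0}}{|\Omega|}
\]
for all $t\in[0,T]$, the a.e.\ estimate passing to every $t$ by continuity of $t\mapsto[\varphi^\eps(t)]$. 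By the very definition of $C_\star$, assumption \eqref{eq:ass1} is exactly $\sqrt{C_\star T\mc E_0}/|\Omega|<1-c_0$, so \eqref{eq:boundmass} holds with $m_0:=c_0+\sqrt{C_\star T\mc E_0}/|\Omega|\in[0,1)$.

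I expect the only delicate point to be the constant bookkeeping in this second case: one must restrict $\eps$ so that the $\eps$-dependent part of the $L^p$-bound for $\varphi^\eps$ is absorbed into the coercivity constant $C_F$, and then track the constants sharply enough that the threshold in \eqref{eq:ass1} closes the bound $|[\varphi^\eps(t)]|<1$ with no slack. There is no genuine obstacle, since both substitutes for mass conservation are already contained in Proposition~\ref{prop:unifboundP} and in the energy balance \eqref{eq:EB}.
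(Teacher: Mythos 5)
Your argument is correct and follows essentially the same route as the paper: under \eqref{eq:ass2} you invoke the mass conservation \eqref{eq:massconservation} together with the energy bound on $\|\sigma^\eps(t)\|_2$, and under \eqref{eq:ass1} you test \eqref{eq:weakeq}$_1$ with $\Phi\equiv 1$, use $(vi)$ of Proposition~\ref{prop:unifboundP} via Cauchy--Schwarz and control $\int_\Omega P(\varphi^\eps)$ through \eqref{eq:propP1} and $(iii)$, arriving at exactly the same $m_0$ and the same choice $\eps_1=1\wedge(C_F|\Omega|/\mc E_0)$. The only cosmetic difference is that you apply Cauchy--Schwarz first in space and then in time rather than in space-time at once, which yields the identical constant.
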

	\begin{proof}
		We begin by assuming \eqref{eq:ass1}. Let $\eps_1:=1\wedge \frac{|\Omega|C_F}{\mc E_0}$ and fix $t\in [0,T]$ and $\eps\in (0,\eps_1)$. Observe that by $(iii)$ in Proposition~\ref{prop:unifboundP} we have
		\begin{equation}\label{eq:useful}
			\|\varphi^\eps\|_{B([0,T];L^p(\Omega))}\le \left(\frac{\mc E_0\eps+C_F|\Omega|}{c_F}\right)^\frac 1p\le \left(2|\Omega|\frac{C_F}{c_F}\right)^\frac 1p.
		\end{equation}
		Now we estimate using the equation \eqref{eq:weakeq}$_1$, the energy balance \eqref{eq:EB} and the growth condition \eqref{eq:propP1}, recalling also \eqref{eq:inmassbound}:
		\begin{align*}
			|[\varphi^\eps(t)]|&\le |[\varphi^\eps_0]|+\frac{1}{|\Omega|}\left|\int_0^t(\dot\varphi^\eps(\tau),1)_2\d\tau\right|=|[\varphi^\eps_0]|+\frac{1}{|\Omega|}\left|\int_0^t(P(\varphi^\eps(\tau))(\sigma^\eps(\tau){-}\mu^\eps(\tau)),1)_2\d\tau\right|\\
			&\le c_0+\frac{1}{|\Omega|}\|\sqrt{P(\varphi^\eps)}(\sigma^\eps{-}\mu^\eps)\|_{L^2(0,T;L^2(\Omega))}\left(\int_0^T\int_\Omega P(\varphi^\eps(\tau))\d x\d\tau\right)^\frac 12\\
			&\le c_0+\left(\frac{\mc E_0 C_P}{|\Omega|^2}\int_0^T\int_\Omega (1+|\varphi^\eps(\tau)|^r)\d x\d\tau\right)^\frac 12.
		\end{align*}
		Since $r\le p-2$ by assumption, one has $|u|^r\le 1+|u|^p$ for all $u\in \R$, hence by \eqref{eq:useful} we infer
		\begin{align*}
			|[\varphi^\eps(t)]|&\le c_0+ \left[\frac{\mc E_0 C_P}{|\Omega|^2}\left(2T|\Omega|+\int_0^T \|\varphi^\eps(\tau)\|_p^p\d\tau\right)\right]^\frac 12\\
			&\le c_0+\left(2T\mc E_0 C_P\frac{c_F+C_F}{|\Omega|c_F}\right)^\frac 12=:m_0.
		\end{align*}
		By \eqref{eq:ass1} one has $m_0<1$ and so we conclude the first part of the proof.
		
		If \eqref{eq:ass2} is instead in force, we first set $\eps_1:=1$. By means of \eqref{eq:massconservation} and the energy balance \eqref{eq:EB}, for all $t\in [0,T]$ and $\eps\in (0,\eps_1)$ we then have
		\begin{align*}
			|[\varphi^\eps(t)]|&= |[\varphi^\eps_0+\sigma^\eps_0]-[\sigma^\eps(t)]|\le d_0+|[\sigma^\eps(t)]|\le d_0+ \frac{1}{|\Omega|^\frac12}\|\sigma^\eps(t)\|_2\\
			&\le d_0+\left(2\frac{\mc E_0}{|\Omega|}\right)^\frac 12=:m_0.
		\end{align*}
		Again by \eqref{eq:ass2} one has $m_0<1$ and so the proof is complete.
	\end{proof}
	
	We conclude the section with the following a priori estimate on the (positive part of the) discrepancy density
	\begin{equation}\label{eq:discrepancy}
		\xi^\eps(\varphi^\eps(t)):=\frac\eps 2|\varphi^\eps(t)|^2-\frac 1\eps F(\varphi^\eps(t)),
	\end{equation}
	whose proof can be found in \cite[Theorem~3.6]{Chen}.
	
	\begin{lemma}
		There exists $\eta_0\in (0,1]$, and there exist two continuous and nonincreasing functions $M_1,M_2\colon (0,\eta_0]\to (0,+\infty)$ such that the following holds. For every $\eta\in (0,\eta_0)$ and for every $\eps\in \left(0,\frac{1}{M_1(\eta)}\right)$ one has
		\begin{equation*}
			\int_0^T\int_\Omega \xi^\eps(\varphi^\eps(\tau))^+\d x\d\tau\le \eta\int_0^TE^\eps(\tau)\d\tau+\eps M_2(\eta)\|\mu^\eps\|^2_{L^2(0,T;L^2(\Omega))}.
		\end{equation*}
		In particular, whenever Lemma~\ref{lemma:boundmu} applies, there holds
		\begin{equation}\label{eq:discvanish}
			\lim\limits_{\eps\to 0}\int_0^T\int_\Omega \xi^\eps(\varphi^\eps(\tau))^+\d x\d\tau=0.
		\end{equation}
	\end{lemma}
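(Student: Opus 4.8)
The estimate concerns the positive part $\xi^\eps(\varphi^\eps)^+$ of the discrepancy density, and the natural strategy—following \cite[Theorem 3.6]{Chen}—is to derive a differential inequality for the spatial integral of $\xi^\eps(\varphi^\eps)^+$ (or of a suitable regularization thereof), showing that its growth in time is controlled by the energy and by the $L^2$-norm of the chemical potential. The starting point is the classical pointwise observation that, away from the zero set of $\nabla\varphi^\eps$, the discrepancy density satisfies an elliptic inequality: using $\mu^\eps=-\eps\Delta\varphi^\eps+\tfrac1\eps F'(\varphi^\eps)$ one computes $-\eps\Delta\xi^\eps(\varphi^\eps)+\text{(nonneg.\ terms)}\le (\text{error involving }\mu^\eps,\nabla\mu^\eps)$, so that $\xi^\eps(\varphi^\eps)^+$ is a subsolution of a degenerate parabolic-type problem with right-hand side controlled by $\mu^\eps$. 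One then tests this inequality with $\xi^\eps(\varphi^\eps)^+$ itself (or integrates against an appropriate cutoff), integrates in space and time over $Q$, and absorbs the good terms. Since the paper only needs the weaker integrated bound, I would not reprove the full machinery but quote \cite[Theorem 3.6]{Chen}, checking that the source terms $R_1=P(\varphi^\eps)(\sigma^\eps-\mu^\eps)$ do not affect the argument—indeed Chen's proof of the discrepancy estimate is purely about the Cahn–Hilliard \emph{structure} of the chemical potential relation $\mu^\eps=-\eps\Delta\varphi^\eps+\tfrac1\eps F'(\varphi^\eps)$ together with the a priori bounds $\|\nabla\mu^\eps\|_{L^2}\le\sqrt{\mc E_0}$ and $\sup_t E^\eps(t)\le\mc E_0$ from Proposition~\ref{prop:unifboundP}, and these are all available here regardless of the presence of mass sources.

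**Key steps.** First, invoke Proposition~\ref{prop:unifboundP} to record the uniform bounds $\sup_{t}E^\eps(t)\le\mc E_0$ and $\|\nabla\mu^\eps\|_{L^2(0,T;L^2(\Omega))}\le\sqrt{\mc E_0}$, and note these are exactly the inputs used in \cite[Theorem 3.6]{Chen}. Second, apply that theorem verbatim to obtain the existence of $\eta_0\in(0,1]$ and continuous nonincreasing $M_1,M_2\colon(0,\eta_0]\to(0,+\infty)$ such that, for $\eta\in(0,\eta_0)$ and $\eps\in(0,1/M_1(\eta))$,
\begin{equation*}
\int_0^T\!\!\int_\Omega \xi^\eps(\varphi^\eps(\tau))^+\d x\d\tau\le \eta\int_0^T E^\eps(\tau)\d\tau+\eps M_2(\eta)\|\mu^\eps\|^2_{L^2(0,T;L^2(\Omega))}.
\end{equation*}
Third, bound the first term on the right by $\eta T\mc E_0$ using the energy estimate. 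Fourth, under the hypotheses of Lemma~\ref{lemma:boundmu}, use \eqref{eq:boundmu} (equivalently, the conclusion that $\mu^\eps$ is uniformly bounded in $L^2(0,T;H^1(\Omega))$) to see that $\eps M_2(\eta)\|\mu^\eps\|^2_{L^2(0,T;L^2(\Omega))}\le \eps\, M_2(\eta)\, C$ for a constant $C$ independent of $\eps$. Fifth, let $\eps\to 0$: the second term vanishes and one is left with $\limsup_{\eps\to0}\int_0^T\!\int_\Omega\xi^\eps(\varphi^\eps)^+\le\eta T\mc E_0$; since $\eta\in(0,\eta_0)$ is arbitrary, the limit is zero, which is \eqref{eq:discvanish}.

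**Main obstacle.** The only genuinely delicate point is verifying that the mass source $R_1(\varphi^\eps,\sigma^\eps,\mu^\eps)$ in the $\varphi^\eps$-equation does not interfere with Chen's discrepancy argument. In \cite{Chen} the equation $\dot\varphi^\eps=\Delta\mu^\eps$ is used only mildly—the discrepancy estimate is essentially \emph{elliptic} in $\varphi^\eps$ at fixed time, relying on $\mu^\eps=-\eps\Delta\varphi^\eps+\tfrac1\eps F'(\varphi^\eps)$ and on test-function manipulations that bring in $\|\nabla\mu^\eps\|_{L^2}$ and $E^\eps$ but \emph{not} $\dot\varphi^\eps$ directly. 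Hence the source term, which modifies only the evolution law $\dot\varphi^\eps-\Delta\mu^\eps=R_1$, is transparent to the argument. I would therefore spend one or two sentences pointing this out explicitly (as the paper already does by simply writing ``whose proof can be found in \cite[Theorem~3.6]{Chen}''), and otherwise the proof is a direct citation plus the elementary limit passage above.
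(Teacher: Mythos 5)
Your proposal is correct and follows the same route as the paper, which likewise proves the main inequality simply by invoking \cite[Theorem~3.6]{Chen} (the discrepancy estimate being purely elliptic in the relation $\mu^\eps=-\eps\Delta\varphi^\eps+\tfrac1\eps F'(\varphi^\eps)$, so the mass source is irrelevant) and then deduces \eqref{eq:discvanish} from the uniform bounds $\int_0^T E^\eps\le T\mc E_0$ and $\|\mu^\eps\|_{L^2(0,T;L^2(\Omega))}\le C$ supplied by Proposition~\ref{prop:unifboundP} and Lemma~\ref{lemma:boundmu}, letting first $\eps\to0$ and then using the arbitrariness of $\eta$. No gaps.
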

	\subsection{Compactness}\label{subsec:compP}
	The uniform bounds previously obtained allow one to infer the following compactness result.
	\begin{prop}\label{prop:compactnessP}
		There exist a (non relabelled) subsequence $\eps\to 0$, a bounded measurable function $E\colon [0,T]\to [0,+\infty)$, a measurable set $Q^C\subseteq Q$, a function $\sigma\in L^2(0,T;H^1(\Omega))$ and Radon measures $\lambda, \lambda_{i,j}$ on $\overline{Q}$ for $i,j=1,\dots,d$ such that the following facts hold true:
		\begin{enumerate}
			\item $E^\eps(t)\xrightarrow[\eps\to 0]{}E(t)$ for almost every $t\in (0,T)$;
			\item $w^\eps\xrightarrow[\eps\to 0]{}2\theta \chi_{Q^C}$ in $C([0,T];L^1(\Omega))$, and $\chi_{Q^C}$ is in $C^\frac 18([0,T];L^1(\Omega))\cap B([0,T];BV(\Omega))$;
			\item $\varphi^\eps\xrightarrow[\eps\to 0]{}-1+2\chi_{Q^C}=:\varphi$ in $C([0,T];L^2(\Omega))$, and $\varphi^\eps(t)\xrightarrow[\eps\to 0]{}\varphi(t)$ in $L^q(\Omega)$ for all $q\in [1,p)$ and $t\in [0,T]$;
			\item $\sigma^\eps \xrightharpoonup[\eps\to 0]{}\sigma$ in $L^2(0,T;H^1(\Omega))$ and $\sigma^\eps \xrightarrow[\eps\to 0]{}\sigma$ in $L^2(0,T;L^q(\Omega))$ for all $q\in [1,6)$;
			\item $e^\eps(\varphi^\eps)\d t\d x\xrightharpoonup[\eps\to 0]{\ast}\lambda$ and $\eps\partial_i\varphi^\eps\partial_j\varphi^\eps\d t\d x\xrightharpoonup[\eps\to 0]{\ast}\lambda_{i,j}$ in the sense of measures in $\overline Q$.
		\end{enumerate}
		If in addition \eqref{eq:ass1} or \eqref{eq:ass2} is in force, then there exists a function $\mu\in L^2(0,T;H^1(\Omega))$ such that
		\begin{enumerate}[start=6]
			\item $\mu^\eps \xrightharpoonup[\eps\to 0]{}\mu$ in $L^2(0,T;H^1(\Omega))$;
			\item $\sqrt{P(\varphi^\eps)}(\sigma^\eps{-}\mu^\eps)\xrightharpoonup[\eps\to 0]{}\sqrt{P(\varphi)}(\sigma{-}\mu)$ in $L^2(0,T;L^2(\Omega))$.
		\end{enumerate}
	\end{prop}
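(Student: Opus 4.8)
The plan is to extract the announced subsequence in several successive steps, keeping at each step the same (non--relabelled) index $\eps$ and ordering the statements so that the identification of each limit may rely on the ones already obtained. The only ingredients are the uniform estimates of Proposition~\ref{prop:unifboundP} and Lemmas~\ref{lemma:boundw} and \ref{lemma:AL} (and, for (6)--(7), of Lemmas~\ref{lemma:boundphi} and \ref{lemma:boundmu}), combined with Helly's selection principle, the vector--valued Arzel\`a--Ascoli theorem, the Aubin--Lions--Simon lemma, and the weak--$\ast$ sequential compactness of bounded Radon measures. For (1) and (4) I would argue as follows: by the energy balance \eqref{eq:EB} the map $t\mapsto E^\eps(t)+\tfrac12\|\sigma^\eps(t)\|_{2}^2$ is nonincreasing and bounded by $\mc E_0$, so Helly's theorem furnishes a subsequence along which it converges pointwise to a nonincreasing $g\colon[0,T]\to[0,\mc E_0]$; Proposition~\ref{prop:unifboundP}(iv) gives weak compactness of $\sigma^\eps$ in $L^2(0,T;H^1(\Omega))$, and pairing the $L^2(0,T;H^1(\Omega))$--bound with Lemma~\ref{lemma:AL} in the Aubin--Lions--Simon lemma (via $H^1(\Omega)\hookrightarrow\hookrightarrow L^q(\Omega)\hookrightarrow (W^{1,\alpha}(\Omega))^\ast$ for $q<6$) yields $\sigma^\eps\to\sigma$ strongly in $L^2(0,T;L^q(\Omega))$, $q<6$, which is (4). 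In particular $\|\sigma^\eps(\cdot)\|_{2}^2\to\|\sigma(\cdot)\|_{2}^2$ in $L^1(0,T)$, hence pointwise a.e.\ up to a further subsequence, so $E(t):=g(t)-\tfrac12\|\sigma(t)\|_{2}^2$ gives (1).

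For (2), (3) and (5) I would proceed as in \cite{Chen, MelRoc}. By Lemma~\ref{lemma:boundw} the functions $w^\eps$ are bounded in $B([0,T];W^{1,1}(\Omega))$ and equicontinuous from $[0,T]$ to $L^1(\Omega)$; since $W^{1,1}(\Omega)\hookrightarrow\hookrightarrow L^1(\Omega)$, Arzel\`a--Ascoli provides a subsequence with $w^\eps\to w$ in $C([0,T];L^1(\Omega))$. To identify $w$, denote by $v^\eps(t,x)\in\{-1,1\}$ the endpoint nearest to $\varphi^\eps(t,x)$; the two--sided estimate \eqref{eq:propW}, together with Proposition~\ref{prop:unifboundP}(ii)--(iii), gives $\|w^\eps(t)-W(v^\eps(t))\|_{1}\le C\sqrt\eps$ and $c\,\|\varphi^\eps(t)-v^\eps(t)\|_{2}^2\le \|w^\eps(t)-W(v^\eps(t))\|_{1}$, uniformly in $t$. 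Since $W(v^\eps(t))$ takes only the values $0$ and $2\theta$ pointwise and converges to $w(t)$ uniformly in $t$, the limit $w(t)$ is $\{0,2\theta\}$--valued, so $w=2\theta\chi_{Q^C}$ with $Q^C:=\{w=2\theta\}$, and lower semicontinuity of the total variation yields $\chi_{Q^C}\in B([0,T];BV(\Omega))$; this is (2). The same two inequalities force $\varphi^\eps(t)\to\varphi(t):=-1+2\chi_{Q^C_t}$ in $L^2(\Omega)$ uniformly in $t$, hence $\varphi\in C^{1/16}([0,T];L^2(\Omega))$ and, using $\|\chi_A-\chi_B\|_{L^1}=\|\chi_A-\chi_B\|_{L^2}^2$ for characteristic functions, $\chi_{Q^C}\in C^{1/8}([0,T];L^1(\Omega))$; interpolating the uniform $L^2$--convergence with the $L^p$--bound of Proposition~\ref{prop:unifboundP}(iii) upgrades it to $L^q(\Omega)$ for all $q<p$, which is (3). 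Finally, $\int_Q e^\eps(\varphi^\eps)\,\d t\d x=\int_0^TE^\eps(t)\,\d t\le T\mc E_0$ and $\int_Q\eps|\partial_i\varphi^\eps\,\partial_j\varphi^\eps|\,\d t\d x\le 2T\mc E_0$, so $e^\eps(\varphi^\eps)\,\d t\d x$ and $\eps\partial_i\varphi^\eps\partial_j\varphi^\eps\,\d t\d x$ are bounded in $\mc M(\overline Q)=C(\overline Q)^\ast$ and Banach--Alaoglu gives (5).

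Under either \eqref{eq:ass1} or \eqref{eq:ass2}, Lemma~\ref{lemma:boundphi} provides the mass bound \eqref{eq:boundmass}, whence Lemma~\ref{lemma:boundmu} gives a uniform bound for $\mu^\eps$ in $L^2(0,T;H^1(\Omega))$, and weak compactness yields (6). For (7), item (3), the continuity of $P$ and the growth \eqref{eq:propP2} (recall $r\le p-2$, so $2p/r>3$) give $\sqrt{P(\varphi^\eps)}\to\sqrt{P(\varphi)}$ strongly in $L^s(Q)$ for some $s>2$; coupling this with the strong convergence $\sigma^\eps\to\sigma$ from (4) and the weak convergence $\mu^\eps\rightharpoonup\mu$ from (6), the product $\sqrt{P(\varphi^\eps)}(\sigma^\eps-\mu^\eps)$ converges in a suitable Lebesgue space to $\sqrt{P(\varphi)}(\sigma-\mu)$, which must then coincide with its weak $L^2(Q)$--limit granted by Proposition~\ref{prop:unifboundP}(vi).

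The routine but delicate point is the bookkeeping of Lebesgue exponents throughout; the genuine obstacle is the passage to the limit in the two nonlinear quantities. For $w^\eps=W(\varphi^\eps)$ one cannot merely invoke weak compactness: one has to exploit the quadratic lower bound in \eqref{eq:propW}, together with the $O(\sqrt\eps)$ closeness to the pure phases from Proposition~\ref{prop:unifboundP}(ii), in order to force the limit onto the two--valued function $2\theta\chi_{Q^C}$ and, simultaneously, to upgrade the convergence of $\varphi^\eps$ to strong convergence in $C([0,T];L^2(\Omega))$. For $\sqrt{P(\varphi^\eps)}(\sigma^\eps-\mu^\eps)$ one needs exactly enough strong compactness on $\varphi^\eps$ (hence on $\sqrt{P(\varphi^\eps)}$) and on $\sigma^\eps$ to pass to the limit in the product, since $\mu^\eps$ is controlled only weakly; this is precisely what the preceding items, the growth conditions on $F$ and $P$, and the smallness hypotheses \eqref{eq:ass1}--\eqref{eq:ass2} are designed to supply.
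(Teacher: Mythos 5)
Your proposal is correct and follows essentially the same route as the paper: Helly plus the a.e.\ pointwise convergence of $\|\sigma^\eps(t)\|_2^2$ for (1), Aubin--Lions with Lemma~\ref{lemma:AL} for (4), Arzel\`a--Ascoli on $w^\eps$ combined with the two-sided estimate \eqref{eq:propW} and Proposition~\ref{prop:unifboundP}(ii) to force the limit onto $2\theta\chi_{Q^C}$ for (2)--(3), Banach--Alaoglu for (5), Lemmas~\ref{lemma:boundphi}--\ref{lemma:boundmu} for (6), and identification of the weak $L^2$ limit in (7) through strong convergence of $\sqrt{P(\varphi^\eps)}$ paired with the weak convergence of $\sigma^\eps-\mu^\eps$. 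Your only deviations are cosmetic (identifying $w$ via the nearest pure phase $v^\eps$ rather than via $\varphi=W^{-1}(w)$, and phrasing the exponent bookkeeping in (7) slightly differently), and you correctly note that $t\mapsto E^\eps(t)+\tfrac12\|\sigma^\eps(t)\|_2^2$ is non\emph{increasing}, where the paper's proof writes ``nondecreasing''.
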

	\begin{proof}
		The validity of $(5)$ and of the weak convergence in $(4)$ directly follows from the uniform bounds of $e^\eps(\varphi^\eps)$ and $\sigma^\eps$ in $L^1(Q)$ and $L^2(0,T;H^1(\Omega))$, respectively. The strong convergence in $(4)$ instead can be obtained by the Aubin-Lions compactness theorem: indeed, by Lemma~\ref{lemma:AL} we know that $\sigma^\eps$ is also bounded in $H^1(0,T;(W^{1,\alpha}(\Omega))^*)$ if $\alpha>3$, and for $q\in [2,6)$ the inclusions $H^1(\Omega)\hookrightarrow\hookrightarrow L^q(\Omega)\hookrightarrow(W^{1,\alpha}(\Omega))^*$ hold.
		
		We now show $(1)$. From the energy balance \eqref{eq:EB} we deduce that the functions $t\mapsto E^\eps(t)+\frac 12 \|\sigma^\eps(t)\|_2^2$ are nondecreasing and uniformly bounded; thus, by Helly's Selection Theorem they pointwise converge to a nondecreasing function in $[0,T]$, call it $\widetilde E$. On the other hand, for almost every $t\in(0,T)$ we have $\sigma^\eps(t)\xrightarrow[\eps\to 0]{}\sigma(t)$ in $L^2(\Omega)$. Hence for almost every $t\in (0,T)$ we infer $E^\eps(t)\xrightarrow[\eps\to 0]{}\widetilde{E}(t)-\frac 12 \|\sigma(t)\|_2^2=:E(t)$.
		
		As regards $(2)$ and $(3)$ we first observe that, in view of \eqref{eq:boundholder}, an application of Ascoli-Arzel\'a Theorem yields $w^\eps\xrightarrow[\eps\to 0]{}w$ in $C([0,T];L^1(\Omega))$. After defining $\varphi(t,x):=W^{-1}(w(t,x))$ (notice that $W$ is invertible by definition \eqref{eq:W}), by using \eqref{eq:w} and \eqref{eq:propW} it is then easy to deduce $\varphi^\eps\xrightarrow[\eps\to 0]{}\varphi$ in $C([0,T];L^2(\Omega))$. So, as a consequence of $(ii)$ in Proposition~\ref{prop:unifboundP}, it must be $|\varphi|=1$ almost everywhere in $Q$, namely $\varphi$ has the form $\varphi=-1+2\chi_{Q^C}$ for some measurable set $Q_C\subseteq Q$. Recalling \eqref{eq:theta}, this finally implies
		\begin{equation*}
			w(t,x)=W(\varphi(t,x))=\int_{-1}^{\varphi(t,x)}\sqrt{2\widetilde{F}(u)}\d u=\int_{-1}^{-1+2\chi_{Q^C}(t,x)}\sqrt{2{F}(u)}\d u=2\theta\chi_{Q^C}(t,x).
		\end{equation*}
		Moreover, as a consequence of \eqref{eq:boundholder}, observe that for $0\le s\le t\le T$ one has
		\begin{equation*}
			\|\chi_{Q^C_t}-\chi_{Q^C_s}\|_1=\lim\limits_{\eps\to 0}\frac 14\|\varphi^\eps(t)-\varphi^\eps(s)\|_2^2\le C(t-s)^\frac 18.
		\end{equation*}
		By weak compactness in $BV(\Omega)$, from \eqref{eq:boundholder} one also obtains $\chi_{Q^C}\in B([0,T];BV(\Omega))$. Furthermore, since $\varphi^\eps(t)\xrightarrow[\eps\to 0]{}\varphi(t)$ in $L^2(\Omega)$ and $\varphi^\eps(t)$ is bounded in $L^p(\Omega)$, a standard argument with Lebesgue spaces yields $\varphi^\eps(t)\xrightarrow[\eps\to 0]{}\varphi(t)$ in $L^q(\Omega)$ for $q\in [1,p)$.
		
		Assume now that \eqref{eq:ass1} or \eqref{eq:ass2} is in force. Lemmas~\ref{lemma:boundmu} and \ref{lemma:boundphi} then immediately yield $(6)$. 
		
		Furthermore, from $(vi)$ in Proposition~\ref{prop:unifboundP} we deduce that 
		\begin{equation*}
			\sqrt{P(\varphi^\eps)}(\sigma^\eps{-}\mu^\eps)\xrightharpoonup[\eps\to 0]{}g,\qquad\text{in }L^2(0,T;L^2(\Omega)).
		\end{equation*}
		We just need to show $g=\sqrt{P(\varphi)}(\sigma{-}\mu)$ in order to prove $(7)$. By uniqueness of the limit, it is enough to prove that
		\begin{equation*}
			\sqrt{P(\varphi^\eps)}(\sigma^\eps{-}\mu^\eps)\xrightharpoonup[\eps\to 0]{}\sqrt{P(\varphi)}(\sigma{-}\mu),\qquad\text{in }L^2(0,T;L^\delta(\Omega))\quad\text{ for some $\delta\in (1,2)$},
		\end{equation*}
		which in turn is a byproduct of
		\begin{equation}\label{eq:deltaprime}
			\sqrt{P(\varphi^\eps)}\psi\xrightarrow[\eps\to 0]{}	\sqrt{P(\varphi)}\psi\quad\text{in }L^2(0,T;L^\frac 65(\Omega)),\quad\text{ for all }\psi\in L^2(0,T;L^{\delta'}(\Omega))\quad\text{ for some $\delta'>2$},
		\end{equation}
		observing that $\sigma^\eps-\mu^\eps\xrightharpoonup[\eps\to 0]{}\sigma-\mu$ in $L^2(0,T;L^6(\Omega))$.
		
		In order to show $\eqref{eq:deltaprime}$ we first notice that since $r\le p-2$ we also have $p>6/7(r-1)$, and so there exists $\delta'>2$ for which
		\begin{equation}\label{eq:condition}
			\frac{6\delta'}{7\delta'-12}(r-1)\le p.
		\end{equation}
		We then estimate by using H\"older inequality:
		\begin{align*}
			&\|\sqrt{P(\varphi^\eps)}\psi-	\sqrt{P(\varphi)}\psi\|^2_{L^2(0,T;L^\frac 65(\Omega))}\le \int_0^T\|\psi(\tau)\|^2_{\delta'}\|{P(\varphi^\eps(\tau))}-	{P(\varphi(\tau))}\|_{\frac{3\delta'}{5\delta'-6}}\d\tau\\
			\le&\|\psi\|_{L^2(0,T;L^{\delta'}(\Omega))}^2\sup\limits_{t\in [0,T]}\|{P(\varphi^\eps(t))}-	{P(\varphi(t))}\|_{\frac{3\delta'}{5\delta'-6}}.
		\end{align*}
		We now observe that for a fixed $t\in [0,T]$ by means of \eqref{eq:propP2} and \eqref{eq:condition} we have (without loss of generality here we assume $r>1$)
		\begin{align*}
			\|{P(\varphi^\eps(t))}-	{P(\varphi(t))}\|_{\frac{3\delta'}{5\delta'-6}}&\le C\|\varphi^\eps(t)-\varphi(t)\|_2\left(1+\|\varphi^\eps(t)\|^{r-1}_{\frac{6\delta'}{7\delta'-12}(r-1)}\right)\\
			&\le C\|\varphi^\eps(t)-\varphi(t)\|_2\left(1+\|\varphi^\eps(t)\|^{r-1}_{p}\right)\le C\|\varphi^\eps(t)-\varphi(t)\|_2,
		\end{align*}
		which vanishes uniformly. So \eqref{eq:deltaprime} is proved and we conclude.
	\end{proof}
	Finally, the following lemma will be used to pass to the limit the right-hand side of equation \eqref{eq:weakeq}.
	\begin{lemma}\label{lemma:convP}
		Consider the subsequence and the limit objects given by Proposition~\ref{prop:compactnessP}. Then for any $\Phi\in C([0,T]\times\overline{\Omega})$ and for all $0\le s\le t\le T$ there holds
		\begin{equation*}
			\lim\limits_{\eps\to 0}\int_s^t(P(\varphi^\eps(\tau))(\sigma^\eps(\tau){-}\mu^\eps(\tau)),\Phi(\tau))_2\d\tau=\int_s^t(P(\varphi(\tau))(\sigma(\tau){-}\mu(\tau)),\Phi(\tau))_2\d\tau.
		\end{equation*}
	\end{lemma}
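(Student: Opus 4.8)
The plan is to decompose the source term as
$P(\varphi^\eps)(\sigma^\eps-\mu^\eps)=\sqrt{P(\varphi^\eps)}\cdot\sqrt{P(\varphi^\eps)}(\sigma^\eps-\mu^\eps)$, so that one factor converges strongly against a test function and the other converges weakly in $L^2(0,T;L^2(\Omega))$, exactly the two convergences established in Proposition~\ref{prop:compactnessP}. More precisely, fix $\Phi\in C([0,T]\times\overline\Omega)$ and $0\le s\le t\le T$, and write
\[
\int_s^t\!\big(P(\varphi^\eps(\tau))(\sigma^\eps(\tau){-}\mu^\eps(\tau)),\Phi(\tau)\big)_2\d\tau=\int_s^t\!\big(\sqrt{P(\varphi^\eps(\tau))}(\sigma^\eps(\tau){-}\mu^\eps(\tau)),\sqrt{P(\varphi^\eps(\tau))}\,\Phi(\tau)\chi_{[s,t]}(\tau)\big)_2\d\tau.
\]
By $(7)$ in Proposition~\ref{prop:compactnessP} the first factor converges weakly in $L^2(0,T;L^2(\Omega))$ to $\sqrt{P(\varphi)}(\sigma-\mu)$, so it suffices to show that $\sqrt{P(\varphi^\eps)}\,\Phi\,\chi_{[s,t]}$ converges strongly in $L^2(0,T;L^2(\Omega))$ to $\sqrt{P(\varphi)}\,\Phi\,\chi_{[s,t]}$; then the product of a weakly convergent and a strongly convergent sequence passes to the limit.

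For the strong convergence of $\sqrt{P(\varphi^\eps)}\,\Phi$, the key input is the pointwise convergence $\varphi^\eps(\tau)\to\varphi(\tau)$ in $L^q(\Omega)$ for every $q\in[1,p)$ and every $\tau$, from $(3)$ in Proposition~\ref{prop:compactnessP}, together with the uniform $L^p$ bound $(iii)$ of Proposition~\ref{prop:unifboundP}. Using the Lipschitz-type estimate \eqref{eq:propP2} on $P$ (and elementary bounds such as $|\sqrt a-\sqrt b|^2\le|a-b|$ to pass from $P$ to $\sqrt P$), one controls $\|\sqrt{P(\varphi^\eps(\tau))}-\sqrt{P(\varphi(\tau))}\|$ in a suitable Lebesgue space by $\|\varphi^\eps(\tau)-\varphi(\tau)\|_2$ times a factor bounded uniformly by the $L^p$ norms — this is essentially the same computation already carried out to prove \eqref{eq:deltaprime}, and it shows $\sqrt{P(\varphi^\eps(\tau))}\to\sqrt{P(\varphi(\tau))}$ in, say, $L^{6/5}(\Omega)$ (or any exponent below $2p/r$) uniformly in $\tau$. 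Multiplying by the bounded continuous $\Phi$ and combining with the uniform bound $\|\sqrt{P(\varphi^\eps)}\|_{B([0,T];L^{2p/r}(\Omega))}\le C$, one upgrades to strong convergence $\sqrt{P(\varphi^\eps)}\,\Phi\to\sqrt{P(\varphi)}\,\Phi$ in $L^2(0,T;L^2(\Omega))$ by interpolation and dominated convergence, restricting to $[s,t]$ by the indicator.

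The only genuinely delicate point is the interplay of exponents: we need the product $\sqrt{P(\varphi^\eps)}(\sigma^\eps-\mu^\eps)$ to make sense in $L^2$ against the test, which forces $\sqrt{P(\varphi^\eps)}$ to be more than square-integrable, and we need the strong convergence of $\sqrt{P(\varphi^\eps)}\Phi$ to hold precisely in $L^2(0,T;L^2(\Omega))$ (not merely in a weaker space) so that pairing with the weak limit is legitimate. This is exactly where the structural assumption $r\le p-2$ is used, just as in the proof of \eqref{eq:deltaprime}; since $\varphi^\eps$ is bounded in $L^p$ and $\sqrt{P(\varphi^\eps)}$ grows like $|\varphi^\eps|^{r/2}$, one has $\sqrt{P(\varphi^\eps)}$ bounded in $L^{2p/r}$ with $2p/r>2$, giving the needed room. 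I expect no further obstacle: once the strong–weak pairing is set up, the conclusion is immediate, and most of the estimates can be quoted verbatim from the proof of Proposition~\ref{prop:compactnessP}.
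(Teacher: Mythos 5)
Your proposal is correct and follows essentially the same route as the paper: both decompose the source term as $\sqrt{P(\varphi^\eps)}(\sigma^\eps{-}\mu^\eps)\cdot\sqrt{P(\varphi^\eps)}\Phi$, invoke the weak $L^2(0,T;L^2(\Omega))$ convergence from item $(7)$ of Proposition~\ref{prop:compactnessP} for the first factor, and prove strong $L^2(0,T;L^2(\Omega))$ convergence of $\sqrt{P(\varphi^\eps)}\Phi$ via $|\sqrt a-\sqrt b|^2\le|a-b|$, the Lipschitz estimate \eqref{eq:propP2}, the uniform $L^p$ bound with $r\le p-2$, and dominated convergence. The only cosmetic difference is that the paper passes directly through $\|P(\varphi^\eps)-P(\varphi)\|_{L^1(0,T;L^1(\Omega))}^{1/2}$ rather than through an intermediate $L^{6/5}$ convergence and interpolation, but the estimates are the same in substance.
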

	\begin{proof}
		Due to $(7)$ in Proposition~\ref{prop:compactnessP}, it is enough to show
		\begin{equation}\label{eq:enough}
			\sqrt{P(\varphi^\eps)}\Phi\xrightarrow[\eps\to 0]{}\sqrt{P(\varphi)}\Phi,\quad\text{ in }L^2(0,T;L^2(\Omega)).
		\end{equation}
		We thus estimate by using \eqref{eq:propP2} and recalling that $|\varphi|=1$ (without loss of generality here we assume $r>1$):
		\begin{align*}
			&\|(\sqrt{P(\varphi^\eps)}-\sqrt{P(\varphi)})\Phi\|_{L^2(0,T;L^2(\Omega))}\le \|\Phi\|_{C([0,T]\times\overline\Omega)}\|P(\varphi^\eps)-P(\varphi)\|^\frac 12_{L^1(0,T;L^1(\Omega))}\\
			\le &C\left(\int_0^T\int_\Omega|\varphi^\eps(\tau)-\varphi(\tau)|(1+|\varphi^\eps(\tau)|^{r-1})\d x\d\tau\right)^\frac 12\\
			&\le C\left(\int_0^T\|\varphi^\eps(\tau)-\varphi(\tau)\|_q(1+\|\varphi^\eps(\tau)\|^{r-1}_{q\frac{r-1}{q-1}})\d\tau\right)^\frac 12\\
			&\le C\|\varphi^\eps-\varphi\|^\frac 12_{L^1(0,T;L^q(\Omega))}\sup\limits_{t\in [0,T]}(1+\|\varphi^\eps(t)\|^\frac{r-1}{2}_{q\frac{r-1}{q-1}}),
		\end{align*}
		where $q\in (1,p)$. By choosing $q\ge p/3$, since $r\le p-2$ by assumption, one has $q\frac{r-1}{q-1}\le p$, and so the term within brackets is uniformly bounded by $(iii)$ in Proposition~\ref{prop:unifboundP}. On the other hand, the term in front is vanishing by Dominated Convergence Theorem due to $(3)$ in Proposition~\ref{prop:compactnessP}. Hence \eqref{eq:enough} is proved and we conclude.
	\end{proof}
	\subsection{Proof of Theorem~\ref{thm:main}}\label{subsec:proofP}
	
	We are finally in a position to prove Theorem~\ref{thm:main}. We first observe that $(I)$, $(II)$ and $(III)$ have been already obtained in Proposition~\ref{prop:compactnessP}.
	
	We then fix $\widetilde{T}\in (0,T_{\rm cr})$, so that by definition \eqref{eq:Tmax} and by the uniform convergence $\varphi^\eps\to\varphi$ in $L^2(\Omega)$ we automatically have \eqref{eq:boundmass} in $[0,\widetilde{T}]$, and so Lemma~\ref{lemma:boundmu} together with a diagonal argument yields the validity of $(IV)$.
	
	In order to prove $(V)$, let us first notice that conditions $(a)$ and $(b)$ in Definition~\ref{def:varifoldsol} are fulfilled (in $[0,\widetilde{T}]$). We now check $(d)$: fix $\Phi,\Psi\in C^1([0,\widetilde{T}]\times\overline \Omega)$ such that $\Phi(\widetilde T)=\Psi(\widetilde T)=0$. After integration by parts (in time) in \eqref{eq:weakeq} we deduce
	\begin{align*}
		\int_0^{\widetilde T}\bigg({-}(1+\varphi^\eps(\tau),\dot\Phi(\tau))_2&+(\nabla\mu^\eps(\tau),\nabla\Phi(\tau))_{2}-(P(\varphi^\eps(\tau))(\sigma^\eps(\tau){-}\mu^\eps(\tau)),\Phi(\tau))_{2}\bigg)\d\tau\\
		&=(1+\varphi^\eps_0,\Phi(0))_2,
	\end{align*}
	and 
	\begin{align*}
		\int_0^{\widetilde T}\bigg({-}(\sigma^\eps(\tau),\dot\Psi(\tau))_{2}&+(\nabla\sigma^\eps(\tau),\nabla\Psi(\tau))_{2}+(P(\varphi^\eps(\tau))(\sigma^\eps(\tau){-}\mu^\eps(\tau))),\Psi(\tau))_{2}\bigg)\d\tau\\
		&=(\sigma^\eps_0,\Psi(0))_{2}.
	\end{align*}
	Letting $\eps\to 0$ and exploiting Proposition~\ref{prop:compactnessP} and Lemma~\ref{lemma:convP} we obtain $(d)$, also using the density of $C^1([0,\widetilde{T}]\times\overline \Omega)$ in $L^2(0,\widetilde T;H^1(\Omega))\cap H^1(0,\widetilde T;L^2(\Omega))$.
	
	We thus need to build the measure $V$ in such a way that also conditions $(c)$, $(e)$ and $(f)$ are satisfied. To this aim we recall that by \eqref{eq:tensor} for any $Y\in C^1_0([0,\widetilde T]\times \Omega;\R^d)$ we have
	
	\begin{equation*}
		\begin{aligned}
			&\int_0^{\widetilde{T}}\int_\Omega\nabla Y(\tau):(e^\eps(\varphi^\eps(\tau))I-\eps \nabla\varphi^\eps(\tau)\otimes\nabla\varphi^\eps(\tau))\d x\d\tau\\
			=& \int_0^{\widetilde{T}}\int_{\Omega}\varphi^\eps(\tau)(\mu^\eps(\tau)\div Y(\tau)+\nabla\mu^\eps(\tau)\cdot Y(\tau))\d x\d\tau.
		\end{aligned}
	\end{equation*}
	
	Letting $\eps\to 0$ we hence obtain the identity
	\begin{equation}\label{eq:tenslimit}
		\begin{aligned}
			&\int_0^{\widetilde{T}}\int_\Omega\nabla Y(\tau):(\d\lambda(\tau,x)I-(\d\lambda_{i,j}(\tau,x))_{d\times d})\\
			=& \int_0^{\widetilde{T}}\int_{\Omega}(-1+2\chi_{Q^C_\tau})(\mu(\tau)\div Y(\tau)+\nabla\mu(\tau)\cdot Y(\tau))\d x\d\tau\\
			=&\int_0^{\widetilde{T}}\int_{\Omega}(-1+2\chi_{Q^C_\tau})\div(\mu(\tau)Y(\tau))\d x\d\tau=2\int_0^{\widetilde{T}}\int_{Q^C_\tau}\div(\mu(\tau)Y(\tau))\d x\d\tau.
		\end{aligned}
	\end{equation}
	
	Since $e^\eps(\varphi^\eps)$ is actually bounded in $L^\infty(0,T;L^1(\Omega))$ by $(i)$ in Proposition \ref{prop:unifboundP}, we now observe that for all $0\le s\le t\le \widetilde{T}$ we have
	\begin{equation*}
		\int_{[s,t]\times\overline{\Omega}}\d\lambda(\tau,x)=\lim\limits_{\eps\to 0}\int_s^t\int_\Omega e^\eps(\varphi^\eps(\tau))\d x\d\tau=\lim\limits_{\eps\to 0}\int_s^tE^\eps(\tau)\d\tau=\int_s^t E(\tau)\d\tau,
	\end{equation*}
	whence the splitting $\d\lambda(t,x)=\d\lambda^t(x)\d t$ for some Radon measure $\lambda^t$ on $\overline{\Omega}$. In particular $\lambda^t(\overline{\Omega})=E(t)$ for almost every $t\in (0,\widetilde T)$, and so for almost every $0\le s\le t\le \widetilde{T}$ by weak lower-semicontinuity and $(1)$ in Proposition~\ref{prop:compactnessP} there holds
	\begin{align*}
		&\lambda^t(\overline{\Omega})+\frac 12 \|\sigma(t)\|^2_{2}+\int_s^t\left(\|\nabla\mu(\tau)\|^2_{2}+\|\nabla\sigma(\tau)\|^2_{2}+\|\sqrt{P(\varphi(\tau))}(\sigma(\tau){-}\mu(\tau))\|^2_{2}\right)\d\tau\\
		\le&\liminf\limits_{\eps\to 0}\left(E^\eps(t){+}\frac 12 \|\sigma^\eps(t)\|^2_{2}{+}\int_s^t\left(\|\nabla\mu^\eps(\tau)\|^2_{2}{+}\|\nabla\sigma^\eps(\tau)\|^2_{2}{+}\|\sqrt{P(\varphi^\eps(\tau))}(\sigma^\eps(\tau){-}\mu^\eps(\tau))\|^2_{2}\right)\d\tau\right)\\	
		=&\lim\limits_{\eps\to 0}\left(E^\eps(s)+\frac 12 \|\sigma^\eps(s)\|^2_{2}\right)=E(s)+\frac 12 \|\sigma(s)\|^2_{2}= \lambda^s(\overline{\Omega})+\frac 12 \|\sigma(s)\|^2_{2},
	\end{align*}
	namely $(f)$ is fulfilled.
	
	We also notice that for $Y,Z\in C([0,\widetilde T]\times\overline \Omega;\R^d)$ there holds
	\begin{align*}
		&\int_0^{\widetilde{T}}\int_\Omega \eps(\nabla\varphi^\eps(\tau)\otimes \nabla\varphi^\eps(\tau))Z(\tau)\cdot Y(\tau) \d x\d\tau\\
		\le & \int_0^{\widetilde{T}}\int_\Omega \eps|\nabla\varphi^\eps(\tau)|^2|Z(\tau)||Y(\tau)| \d x\d\tau\\
		\le &\int_0^{\widetilde{T}}\int_\Omega e^\eps(\varphi^\eps(\tau))|Z(\tau)||Y(\tau)| \d x\d\tau+\|Y\|_{C([0,\widetilde T]\times\overline \Omega)}\|Z\|_{C([0,\widetilde T]\times\overline \Omega)}\int_0^{\widetilde{T}}\int_\Omega \xi^\eps(\varphi^\eps(\tau))^+ \d x\d\tau,
	\end{align*}
	where we used the definition of the energy density \eqref{eq:endens} and of the discrepancy density \eqref{eq:discrepancy}.
	
	Sending $\eps\to 0$ and exploiting \eqref{eq:discvanish} one obtains
	\begin{equation}\label{eq:AC}
		\int_{[0,\widetilde{T}]\times\overline\Omega} (\d\lambda_{i,j}(\tau,x))_{d\times d}Z(\tau,x)\cdot Y(\tau,x) \le \int_{[0,\widetilde{T}]\times\overline\Omega} |Z(\tau,x)||Y(\tau,x)| \d\lambda(\tau,x).
	\end{equation}
	By the arbitrariness of the vector fields $Y,Z$ we thus infer that $\lambda_{i,j}$ is absolutely continuous with respect to $\lambda$ for any $i,j=1,\dots d$, and so the exist $\lambda$-measurable functions $g_{i,j}$ such that $\lambda_{i,j}=g_{i,j}\lambda$. This equality, together with the fact that $\lambda_{i,j}$ are limit of simmetric and positive-definite matrices and \eqref{eq:AC}, implies
	\begin{equation*}
		0\le (g_{i,j})_{d\times d}=(g_{j,i})_{d\times d}\le I,\quad\text{$\lambda$-a.e. in } [0,\widetilde T]\times\overline \Omega,
	\end{equation*}
	whence $(g_{i,j})_{d\times d}=\sum_{i=1}^d\alpha_iv_i\otimes v_i$ for some $\lambda$-measurable functions $\alpha_i$ and unit vectors $v_i$ satisfying
	\begin{equation}\label{eq:propcoeff}
		0\le \alpha_i\le 1,\quad \sum_{i=1}^{d}\alpha_i\le 1,\quad \sum_{i=1}^{d}v_i\otimes v_i=I,\quad \lambda\text{-a.e. in }[0,\widetilde T]\times\overline \Omega.
	\end{equation}
	By exploiting the splitting $\d\lambda(t,x)=\d\lambda^t(x)\d t$, from \eqref{eq:tenslimit} we now deduce
	\begin{equation}\label{eq:imp}
		\begin{aligned}
			2\int_0^{\widetilde{T}}\int_{Q^C_\tau}\div(\mu(\tau)Y(\tau))\d x\d\tau&=\int_0^{\widetilde{T}}\int_\Omega\nabla Y(\tau):(I-(g_{i,j}(\tau)_{d\times d})\d\lambda^\tau(x)\d\tau\\
			&=\int_0^{\widetilde{T}}\int_\Omega\nabla Y(\tau):(I-\sum_{i=1}^d\alpha_i(\tau)v_i(\tau)\otimes v_i(\tau))\d\lambda^\tau(x)\d\tau\\
			&=\int_0^{\widetilde{T}}\int_\Omega\nabla Y(\tau):\sum_{i=1}^d c_i^\tau(x)(I-p_i^\tau(x)\otimes p_i^\tau(x))\d\lambda^\tau(x)\d\tau,
		\end{aligned}
	\end{equation}
	where we set
	\begin{align*}
		\bullet\,& c_i^t(x):=\alpha_i(t,x)+\frac{1}{d-1}\left(1-\sum_{j=1}^{d}\alpha_j(t,x)\right);\\
		\bullet\,& p_i^t(x):=v_i(t,x)/\sim_{\mc S^{d-1}}.
	\end{align*}
	Condition \eqref{eq:coefficients} now follows directly from \eqref{eq:propcoeff}, while \eqref{eq:density} is a byproduct of \eqref{eq:pointw}, the $L^1$-convergence $w^\eps\to 2\theta\chi_{Q^C}$ and the splitting $\d\lambda(t,x)=\d\lambda^t(x)\d t$.
	
	We now conclude by defining $V^t$ as in \eqref{eq:defVt} and $V$ as $\d V(t,x,p):=\d V^t(x,p)\d t$. In this way $(c)$ is automatically satisfied, moreover $(e)$ directly follows from \eqref{eq:imp} recalling the definition of first variation of a varifold \eqref{eq:firstvar}. Finally, the second equality in $(V)$ follows from \eqref{eq:tenslimit}.
	
	The fact that \eqref{eq:ass1} or \eqref{eq:ass2} implies $T_{\rm cr}=T$ is an immediate consequence of Lemma~\ref{lemma:boundphi}.

	\section{Sharp interface limit for Problem H}\label{sec:sharpinterfaceH}
	In this section we prove Theorem~\ref{thm:mainH}, so we tacitly assume all its hypotheses. 
	
	The argument follows the lines of previous section, with a crucial difference. In the energy balance \eqref{eq:EB} of problem \eqref{eq:problemP} all terms was nonnegative, so the first uniform bounds of Proposition~\ref{prop:unifboundP} were somehow for free. Instead, now the last term in the right-hand side of \eqref{eq:EBH} has no sign, and furthermore it depends on the chemical potential $\mu^\eps$, which is the most difficult term to bound.
	
		To overcome this issue, we exploit assumption \eqref{hyp:Htechnical}. This will allow to estimate the right-hand side of \eqref{eq:EBH} in terms of the gradient of $\sigma^\eps$ and of the energy $E^\eps$; we will then employ Gr\"onwall's inequality in order to bound the left-hand side of the energy balance \eqref{eq:EBH}.
		
		To this aim a great help is given by the a-priori $L^\infty$-bound of the nutrient $\sigma^\eps$.
		
		\subsection{Uniform bounds}
		We begin by computing the a-priori estimate for the integrand in the right-hand side of \eqref{eq:EBH}, employing assumption \eqref{hyp:Htechnical}.
		\begin{lemma}\label{lemma:technical}
			Assume \eqref{hyp:Htechnical}. Then for almost every $t\in (0,T)$ there holds
			\begin{equation*}
				(\mu^\eps(t),(\sigma^\eps(t)-1)H(\varphi^\eps(t)))_2\le \frac{\eps}{2}\|\nabla\sigma^\eps(t)\|_2^2+C E^\eps(t).
			\end{equation*}
		\end{lemma}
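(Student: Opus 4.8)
The plan is to substitute the definition $\mu^\eps(t)=-\eps\Delta\varphi^\eps(t)+\frac1\eps F'(\varphi^\eps(t))$ into the inner product and estimate the two resulting contributions separately, the gradient part producing the $\frac\eps2\|\nabla\sigma^\eps(t)\|_2^2$ term and the potential part being absorbed into $CE^\eps(t)$ via the technical assumption \eqref{hyp:Htechnical}.

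For the second-order term $-\eps(\Delta\varphi^\eps(t),(\sigma^\eps(t)-1)H(\varphi^\eps(t)))_2$, I would integrate by parts; the boundary contribution vanishes because $\partial_n\varphi^\eps=0$, leaving $\eps\int_\Omega H(\varphi^\eps)\nabla\varphi^\eps\cdot\nabla\sigma^\eps\d x+\eps\int_\Omega(\sigma^\eps-1)H'(\varphi^\eps)|\nabla\varphi^\eps|^2\d x$. In the first summand $0\le H\le 1$, so a Young inequality bounds it by $\frac\eps2\|\nabla\sigma^\eps(t)\|_2^2+\frac\eps2\|\nabla\varphi^\eps(t)\|_2^2\le\frac\eps2\|\nabla\sigma^\eps(t)\|_2^2+E^\eps(t)$. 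In the second summand I would use $\|H'\|_\infty<\infty$ from \eqref{hyp:H} together with the a priori bound $0\le\sigma^\eps\le 1$ (Theorem~\ref{thm:existenceH}), which gives $|(\sigma^\eps-1)H'(\varphi^\eps)|\le\|H'\|_\infty$ pointwise and hence a bound of the form $\eps\|H'\|_\infty\|\nabla\varphi^\eps(t)\|_2^2\le 2\|H'\|_\infty E^\eps(t)$.

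The heart of the proof is the potential term $\frac1\eps\int_\Omega F'(\varphi^\eps)(\sigma^\eps-1)H(\varphi^\eps)\d x$, and this is where I would exploit the sign $\sigma^\eps-1\le 0$ in conjunction with \eqref{hyp:Htechnical}. On the set $\{F'(\varphi^\eps)\ge 0\}$ the integrand is nonpositive (product of $F'\ge0$, $\sigma^\eps-1\le0$, $H\ge0$) and can be dropped. On $\{F'(\varphi^\eps)<0\}$ I would write the integrand as $|F'(\varphi^\eps)|(1-\sigma^\eps)H(\varphi^\eps)$ and apply $H(\varphi^\eps)\le C\,F(\varphi^\eps)/|F'(\varphi^\eps)|$, obtaining the pointwise bound $C(1-\sigma^\eps)F(\varphi^\eps)\le C\,F(\varphi^\eps)$ since $0\le 1-\sigma^\eps\le 1$; integration then yields $\le\frac C\eps\int_\Omega F(\varphi^\eps)\d x\le C E^\eps(t)$. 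Summing the three pieces gives the claim with $C$ depending only on $\|H'\|_\infty$ and the constant in \eqref{hyp:Htechnical}.

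The only point requiring a little care — not a genuine obstacle — is justifying the integration by parts: one needs $\varphi^\eps(t)\in H^2(\Omega)$ and the test function $(\sigma^\eps(t)-1)H(\varphi^\eps(t))\in H^1(\Omega)$, which follows from the regularity in Theorem~\ref{thm:existenceH} ($\varphi^\eps\in L^\infty(0,T;H^2(\Omega))$, $\sigma^\eps\in L^\infty(0,T;H^1(\Omega))$ with $0\le\sigma^\eps\le1$, and $H$ Lipschitz of class $C^2$), using that $H^1\cap L^\infty$ is stable under products in dimension $d=2,3$. The genuine content is the sign discussion combined with \eqref{hyp:Htechnical}, which is precisely the reason that assumption forces $H(\pm1)=0$.
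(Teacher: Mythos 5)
Your proposal is correct and follows essentially the same route as the paper: the same splitting of $\mu^\eps$ into the $-\eps\Delta\varphi^\eps$ and $\frac1\eps F'(\varphi^\eps)$ contributions, the same integration by parts and Young inequality for the former, and the same sign argument on $\{F'(\varphi^\eps)\ge 0\}$ combined with \eqref{hyp:Htechnical} on $\{F'(\varphi^\eps)<0\}$ for the latter. The only addition is your explicit justification of the integration by parts via the regularity in Theorem~\ref{thm:existenceH}, which the paper leaves implicit.
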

		\begin{proof}
			From the very definition of chemical potential we have
			\begin{align*}
				&(\mu^\eps(t),(\sigma^\eps(t)-1)H(\varphi^\eps(t)))_2\\
				=&\underbrace{-\eps\int_\Omega \Delta\varphi^\eps(t)(\sigma^\eps(t)-1)H(\varphi^\eps(t))\d x}_{=:I_1^\eps(t)} +\underbrace{\frac{1}{\eps}\int_\Omega F'(\varphi^\eps(t))(\sigma^\eps(t)-1)H(\varphi^\eps(t)) \d x}_{=:I_2^\eps(t)}.
			\end{align*}
			After integration by parts, since $H$ and $\sigma^\eps$ are valued in $[0,1]$ and recalling that $H$ is Lipschitz continuous, we can estimate
			\begin{align*}
				I_1^\eps(t)&=\eps \int_\Omega \nabla\varphi^\eps(t) \cdot( H(\varphi^\eps(t))\nabla \sigma^\eps(t)+(\sigma^\eps(t)-1)H'(\varphi^\eps(t))\nabla \varphi^\eps(t))\d x \\
				&\le \eps\left(\int_\Omega |\nabla\varphi^\eps(t)||\nabla \sigma^\eps(t)| \d x+ C\|\nabla\varphi^\eps(t)\|_2^2\right)\\
				&\le \frac{\eps}{2}\|\nabla \sigma^\eps(t)\|_2^2+ C\eps \|\nabla \varphi^\eps(t)\|_2^2\le \frac{\eps}{2}\|\nabla \sigma^\eps(t)\|_2^2+CE^\eps(t).
			\end{align*}
			As regards $I_2^\eps(t)$, we first use the fact that $(\sigma^\eps(t)-1) H(\varphi^\eps(t))$ is nonpositive and then we exploit \eqref{hyp:Htechnical} obtaining
			\begin{align*}
				I_2^\eps(t)&\le \frac{1}{\eps}\int_{\{F'(\varphi^\eps(t,\cdot))<0\}} |F'(\varphi^\eps(t))|(1-\sigma^\eps(t))H(\varphi^\eps(t)) \d x \\
				&\le \frac{C}{\eps}\int_{\{F'(\varphi^\eps(t,\cdot))<0\}} F(\varphi^\eps(t))\d x\le C E^\eps(t).
			\end{align*}
			Adding the two previous inequalities we conclude the proof.
		\end{proof}

		By means of Gr\"onwall's inequality we now infer a uniform bound for the left-hand side of the energy balance \eqref{eq:EBH}.
		\begin{lemma}\label{lemma:enboundH}
			There exists a constant $C>0$ for which the following uniform bound holds true:
			\begin{equation*}
				E^\eps(t)+\frac 12 \|\sigma^\eps(t)\|^2_{2}+\int_0^t\left(\|\nabla\mu^\eps(\tau)\|^2_{2}+\|\nabla\sigma^\eps(\tau)\|^2_{2}\right)\d\tau\le C,\qquad\text{for all }t\in [0,T].
			\end{equation*}
		\end{lemma}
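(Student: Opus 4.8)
The plan is to combine the energy balance \eqref{eq:EBH} with the pointwise estimate of Lemma~\ref{lemma:technical} and then close the argument by Gr\"onwall's inequality. First I would discard the nonnegative term $\|\sqrt{H(\varphi^\eps(\tau))}\sigma^\eps(\tau)\|_2^2$ from the left-hand side of \eqref{eq:EBH}, so that for all $t\in[0,T]$
\begin{equation*}
E^\eps(t)+\tfrac 12 \|\sigma^\eps(t)\|^2_{2}+\int_0^t\left(\|\nabla\mu^\eps(\tau)\|^2_{2}+\|\nabla\sigma^\eps(\tau)\|^2_{2}\right)\d\tau\le E^\eps(0)+\tfrac 12 \|\sigma^\eps_0\|^2_{2}+\int_0^t(\mu^\eps(\tau),(\sigma^\eps(\tau)-1)H(\varphi^\eps(\tau)))_2\,\d\tau,
\end{equation*}
and then apply Lemma~\ref{lemma:technical} to bound the last integral by $\int_0^t\bigl(\tfrac\eps2\|\nabla\sigma^\eps(\tau)\|_2^2+CE^\eps(\tau)\bigr)\,\d\tau$.

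Next I would absorb the gradient term on the left: assuming $\eps\le 1$ (which costs nothing, since we are interested in the limit $\eps\to 0$), one has $\tfrac\eps2\|\nabla\sigma^\eps\|_2^2\le\tfrac12\|\nabla\sigma^\eps\|_2^2$, so moving this contribution to the left-hand side still leaves $\tfrac12\|\nabla\sigma^\eps\|_2^2$ there. Using the initial energy bound \eqref{eq:inenergybound}, namely $E^\eps(0)+\tfrac12\|\sigma^\eps_0\|_2^2\le\mc E_0$, one obtains in particular
\begin{equation*}
E^\eps(t)\le \mc E_0+C\int_0^tE^\eps(\tau)\,\d\tau,\qquad\text{for all }t\in[0,T].
\end{equation*}

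Finally, Gr\"onwall's inequality yields $E^\eps(t)\le \mc E_0\,e^{CT}$ for all $t\in[0,T]$ and all $\eps\in(0,1]$; plugging this bound back into the inequality obtained after the absorption step controls $\tfrac12\|\sigma^\eps(t)\|_2^2$ and $\int_0^t(\|\nabla\mu^\eps(\tau)\|_2^2+\tfrac12\|\nabla\sigma^\eps(\tau)\|_2^2)\,\d\tau$ by $\mc E_0+C\mc E_0 e^{CT}T$, hence bounding the full left-hand side of the claimed estimate by a constant depending only on $\mc E_0$, $C$ and $T$ (and $\Omega$, $F$, $H$). I do not expect a genuine obstacle here: the only delicate point, namely the estimate of the sign-indefinite source term $(\mu^\eps,(\sigma^\eps-1)H(\varphi^\eps))_2$ — which is exactly where the technical hypothesis \eqref{hyp:Htechnical} enters, and which would otherwise involve the hard-to-control quantity $\|\mu^\eps\|_2$ — has already been dealt with in Lemma~\ref{lemma:technical}. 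What remains is merely the routine absorption of the $\eps$-small gradient term and a direct application of Gr\"onwall, the latter being legitimate because \eqref{eq:EBH} holds for every $t\in[0,T]$ and $t\mapsto E^\eps(t)$ is measurable.
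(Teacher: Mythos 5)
Your proposal is correct and follows essentially the same route as the paper: energy balance \eqref{eq:EBH}, Lemma~\ref{lemma:technical} for the sign-indefinite source term, absorption of the $\frac\eps2\|\nabla\sigma^\eps\|_2^2$ contribution for $\eps\le1$, and Gr\"onwall. The only cosmetic difference is that the paper applies Gr\"onwall directly to the full left-hand side $\mc E^\eps(t)$ (using $E^\eps\le\mc E^\eps$), whereas you apply it to $E^\eps$ alone and then substitute back — both are equivalent.
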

		\begin{proof}
			We denote by $\mc E^\eps(t)$ the left-hand side of the energy balance \eqref{eq:EBH}. By Lemma \ref{lemma:technical}, and recalling assumption \eqref{eq:inenergybound}, we deduce
			\begin{align*}
				\mc E^\eps(t)\le \mc E_0+\int_0^t \left(\frac{\eps}{2}\|\nabla\sigma^\eps(\tau)\|_2^2+C E^\eps(\tau)\right)\d\tau.
			\end{align*}
			By absorbing the term with $\|\nabla\sigma^\eps\|_2^2$ in the left-hand side (choosing for instance $\eps$ smaller than 1) we finally infer
			\begin{equation*}
				\mc E^\eps(t)\le C\left(1+\int_0^t\mc E^\eps(\tau)\d\tau\right),
			\end{equation*}
			and we conclude by Gr\"onwall's inequality.
		\end{proof}
	
	As an immediate corollary of the previous two lemmas we obtain:
	\begin{cor}\label{cor:unifboundH}
		The following uniform estimates hold true:
		\begin{itemize}
			\item[(i)] $\sup\limits_{t\in [0,T]}E^\eps(t)\le C$;
			\item[(ii)] $\sup\limits_{t\in [0,T]}\||\varphi^\eps(t)|-1\|_{2}\le C\sqrt{\eps}$;
			\item[(iii)] $\displaystyle\|\varphi^\eps\|_{B([0,T];L^p(\Omega))}\le C$;
			\item[(iv)] $\|\nabla\sigma^\eps\|_{L^2(0,T;L^2(\Omega))}\le C$;
			\item[(v)] $\|\nabla\mu^\eps\|_{L^2(0,T;L^2(\Omega))}\le C$.
		\end{itemize}
	\end{cor}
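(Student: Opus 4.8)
The plan is to read off all five estimates directly from the two preceding lemmas, with no further work beyond bookkeeping. Lemma~\ref{lemma:enboundH} supplies the uniform bound
\begin{equation*}
	E^\eps(t)+\frac 12\|\sigma^\eps(t)\|_2^2+\int_0^t\left(\|\nabla\mu^\eps(\tau)\|_2^2+\|\nabla\sigma^\eps(\tau)\|_2^2\right)\d\tau\le C,\qquad\text{for all }t\in[0,T],
\end{equation*}
with $C$ independent of $\eps\in(0,1)$. Since every summand on the left-hand side is nonnegative, I would obtain $(i)$ by keeping only $E^\eps(t)$, and $(iv)$ and $(v)$ by evaluating at $t=T$ and discarding all the remaining nonnegative contributions.

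For $(ii)$ and $(iii)$ the plan is to feed the coercivity of the potential $F$ into the energy bound. From the definition \eqref{eq:endens} of the energy density one has $\frac 1\eps\int_\Omega F(\varphi^\eps(t))\d x\le E^\eps(t)$, so $(i)$ gives $\int_\Omega F(\varphi^\eps(t))\d x\le C\eps$ uniformly in $t\in[0,T]$ and $\eps\in(0,1)$. Inserting \eqref{eq:propF2} then yields $\overline{c}_F\,\||\varphi^\eps(t)|-1\|_2^2\le\int_\Omega F(\varphi^\eps(t))\d x\le C\eps$, whence $(ii)$ after taking square roots; inserting instead \eqref{eq:propF1} yields $c_F\|\varphi^\eps(t)\|_p^p\le\int_\Omega F(\varphi^\eps(t))\d x+C_F|\Omega|\le C\eps+C_F|\Omega|\le C$, using $\eps<1$ in the last step, which gives $(iii)$. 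This is precisely the analogue, with $\mc E_0$ replaced by the Gr\"onwall constant of Lemma~\ref{lemma:enboundH}, of the way Proposition~\ref{prop:unifboundP} was deduced from the energy balance \eqref{eq:EB} in the case of Problem P.

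I do not expect a genuine obstacle at this stage: all the difficulty specific to Problem H has already been absorbed into Lemma~\ref{lemma:technical}, where assumption \eqref{hyp:Htechnical} is used to control the sign-indefinite source term $(\mu^\eps,(\sigma^\eps-1)H(\varphi^\eps))_2$, and into the Gr\"onwall argument of Lemma~\ref{lemma:enboundH}. The only point requiring a little care is that each constant appearing above be independent of $\eps$; this is guaranteed precisely by the restriction to $\eps\in(0,1)$ already made in the proof of Lemma~\ref{lemma:enboundH}, which is harmless since one is ultimately interested in the limit $\eps\to0$.
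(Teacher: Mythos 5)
Your proposal is correct and matches the paper's intent exactly: the paper states this corollary without proof as an immediate consequence of Lemmas~\ref{lemma:technical} and \ref{lemma:enboundH}, and your derivation — reading $(i)$, $(iv)$, $(v)$ off the Gr\"onwall bound and obtaining $(ii)$, $(iii)$ from $\frac1\eps\int_\Omega F(\varphi^\eps(t))\d x\le E^\eps(t)$ combined with \eqref{eq:propF2} and \eqref{eq:propF1} — is precisely the argument used for Proposition~\ref{prop:unifboundP}, transplanted to Problem H as intended.
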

	
	As before, in order to gain compactness for the phase variable $\varphi^\eps$, it is useful to consider the auxiliary function $w^\eps$ defined in \eqref{eq:w}.
	
	\begin{lemma}
		There hold:
		\begin{align*}
			&\bullet\, \|\nabla w^\eps(t)\|_{1}\le E^\eps(t)\le C,\quad\text{ for all }t\in [0,T];\\
			&\bullet \, \|w^\eps\|_{B([0,T];W^{1,1}(\Omega))}+\|w^\eps\|_{C^\frac{1}{16}([0,T];L^1(\Omega))}+\|\varphi^\eps\|_{C^\frac{1}{16}([0,T];L^2(\Omega))}\le C.
		\end{align*}
	\end{lemma}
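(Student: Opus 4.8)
The strategy is to run the argument of Lemma~\ref{lemma:boundw} essentially verbatim, the key point being that in Problem~H the source term in the equation for $\varphi^\eps$ is far more benign than in Problem~P. First I would establish the pointwise identity $|\nabla w^\eps(t,x)|=\sqrt{2\widetilde F(\varphi^\eps(t,x))}\,|\nabla\varphi^\eps(t,x)|\le e^\eps(\varphi^\eps(t,x))$ exactly as in \eqref{eq:pointw} (this uses only the definition of $\widetilde F$ and Young's inequality), and integrate over $\Omega$ to get $\|\nabla w^\eps(t)\|_1\le E^\eps(t)$; the bound $E^\eps(t)\le C$ is then precisely $(i)$ of Corollary~\ref{cor:unifboundH}. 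Combined with the $L^1$-bound on $w^\eps$ obtained below, this also gives the $B([0,T];W^{1,1}(\Omega))$ part of the second estimate.

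For the time-regularity I would reuse the mollified functions $\varphi^\eps_\eta$ from \eqref{eq:mollified}: properties \eqref{eq:etaa}--\eqref{eq:etac} carry over unchanged, since they rely only on the uniform $L^p$-bound of $\varphi^\eps$ (now $(iii)$ of Corollary~\ref{cor:unifboundH}) and on $\|\nabla w^\eps(t)\|_1\le C$. Fixing $0\le s\le t\le T$ and $\eta\in(0,\eta_0)$, I test \eqref{eq:weakeqH}$_1$ with $\Phi=\varphi^\eps_\eta(t)-\varphi^\eps_\eta(s)$ to write
\[
J^\eps_\eta(t,s):=(\varphi^\eps_\eta(t)-\varphi^\eps_\eta(s),\varphi^\eps(t)-\varphi^\eps(s))_2=-\int_s^t(\nabla\mu^\eps,\nabla\varphi^\eps_\eta(t)-\nabla\varphi^\eps_\eta(s))_2\d\tau+\int_s^t((\sigma^\eps-1)H(\varphi^\eps),\varphi^\eps_\eta(t)-\varphi^\eps_\eta(s))_2\d\tau.
\]
The first integral is bounded by $\tfrac{C}{\eta}(t-s)^{1/2}$ via Cauchy--Schwarz, \eqref{eq:etab} and $(v)$ of Corollary~\ref{cor:unifboundH}, exactly as the $\nabla\mu^\eps$-term in \eqref{eq:boundJ}. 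The essential simplification is the second integral: since $0\le\sigma^\eps\le1$ and $0\le H\le1$ a.e.\ in $Q$, the integrand is bounded by $1$ in $L^\infty(Q)$, so this term is controlled by $\int_s^t\|\varphi^\eps_\eta(t)-\varphi^\eps_\eta(s)\|_1\d\tau\le C(t-s)\|\varphi^\eps_\eta(t)-\varphi^\eps_\eta(s)\|_2\le C(t-s)\le C(t-s)^{1/2}$ (using \eqref{eq:etaa} and $t-s\le T$). No invocation of the chemical potential or of any growth condition is needed here, in contrast with the Hölder/growth juggling required for $I^\eps_\eta$ in Lemma~\ref{lemma:boundw}.

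Combining the two estimates gives $J^\eps_\eta(t,s)\le C(1+\tfrac1\eta)(t-s)^{1/2}$, and then the triangle inequality together with \eqref{eq:etac} and the uniform $L^2$-bound on $\varphi^\eps$ yields $\|\varphi^\eps(t)-\varphi^\eps(s)\|_2^2\le C(1+\tfrac1\eta)(t-s)^{1/2}+C\sqrt\eta$; choosing $\eta=(t-s)^{1/4}$ for small $t-s$ (and bounding trivially otherwise) gives $\|\varphi^\eps(t)-\varphi^\eps(s)\|_2\le C(t-s)^{1/16}$, hence the claimed bound on $\|\varphi^\eps\|_{C^{1/16}([0,T];L^2(\Omega))}$. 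Finally, for $w^\eps$ the growth/Lipschitz estimate \eqref{eq:propW} gives $\|w^\eps(t)\|_1\le C(1+\|\varphi^\eps(t)\|_2^2)\le C$ (the $L^1$-bound) and $\|w^\eps(t)-w^\eps(s)\|_1\le C\|\varphi^\eps(t)-\varphi^\eps(s)\|_2(1+\|\varphi^\eps(t)\|_2+\|\varphi^\eps(s)\|_2)\le C(t-s)^{1/16}$, which completes the proof. The only thing to monitor throughout is $\eps$-independence of the constants, which is automatic since every input (Corollary~\ref{cor:unifboundH}, \eqref{eq:propW}, the mollifier properties) is $\eps$-uniform; there is no genuine obstacle, the real work having already been done in establishing Corollary~\ref{cor:unifboundH} via the Grönwall argument.
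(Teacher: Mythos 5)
Your proposal is correct and follows essentially the same route as the paper: the pointwise bound $|\nabla w^\eps|\le e^\eps(\varphi^\eps)$ combined with Corollary~\ref{cor:unifboundH}, and then the mollification argument of Lemma~\ref{lemma:boundw} in which the only modification is the source term in $J^\eps_\eta$, which you control exactly as the paper does by using $0\le\sigma^\eps\le1$ and $0\le H\le1$ to reduce it to $C(t-s)$. The remaining steps (optimizing in $\eta$, transferring the H\"older bound from $\varphi^\eps$ to $w^\eps$ via \eqref{eq:propW}) coincide with the paper's argument.
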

	\begin{proof}
		The only difference with respect to the proof of Lemma~\ref{lemma:boundw} is the bound of $J^\eps_\eta$ in \eqref{eq:boundJ}, since it uses the equation for $\varphi^\eps$. However, the current situation is simpler, indeed exploiting again the fact that $H$ and $\sigma^\eps$ are valued in $[0,1]$ we can estimate using previous corollary:
		\begin{align*}
			J^\eps_\eta(t,s)=&\int_s^t(\dot\varphi^\eps(\tau),\varphi^\eps_\eta(t)-\varphi^\eps_\eta(s))_2\d \tau\\
			=&{-}\int_{s}^{t}(\nabla\mu^\eps(\tau),\nabla\varphi^\eps_\eta(t){-}\nabla\varphi^\eps_\eta(s))_2\d\tau+\int_s^t((\sigma^\eps(\tau)-1)H(\varphi^\eps(\tau)),\varphi^\eps_\eta(t)-\varphi^\eps_\eta(s))_2\d\tau\\
			\le & \|\nabla\mu^\eps\|_{L^2(0,T;L^2(\Omega))}(\|\nabla\varphi^\eps_\eta(t)\|_2+\|\nabla\varphi^\eps_\eta(s)\|_2)(t-s)^\frac 12+\left(\|\varphi^\eps_\eta(t)\|_1+\|\varphi^\eps_\eta(s)\|_1\right)(t-s)\\
			\le & \frac C\eta(t-s)^\frac 12+ C(t-s)\le C\left(1+\frac 1\eta\right)(t-s)^\frac 12,
		\end{align*}
		where we used \eqref{eq:etaa} and \eqref{eq:etab}. One then concludes by arguing exactly as in Lemma~\ref{lemma:boundw}.
	\end{proof}
	
	Also in this section we bound the derivative of the nutrient in order to deduce strong compactness, which will be used later on.
	
	\begin{lemma}\label{lemma:sigmadotH}
		The derivative $\dot\sigma^\eps$ is bounded in $L^2(0,T;H^1(\Omega)^*)$ uniformly with respect to $\eps$.
	\end{lemma}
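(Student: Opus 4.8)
The plan is to argue exactly as in Lemma~\ref{lemma:AL}, noting that the situation here is in fact simpler: since $0\le\sigma^\eps\le 1$ almost everywhere in $Q$ by Theorem~\ref{thm:existenceH} and $0\le H\le 1$ by \eqref{hyp:H}, the reaction term appearing on the right-hand side of \eqref{eq:weakeqH}$_2$ is uniformly bounded in $L^\infty(Q)$, namely $\|\sigma^\eps(t)H(\varphi^\eps(t))\|_2\le|\Omega|^{1/2}$ for a.e.\ $t\in(0,T)$. In particular no growth condition on the source term needs to be invoked, and one may test directly against $H^1(\Omega)$ rather than against a smaller space $W^{1,\alpha}(\Omega)$ as was done for Problem P. (Recall also that $\dot\sigma^\eps\in L^2(0,T;L^2(\Omega))$ by Theorem~\ref{thm:existenceH}, so that $\dot\sigma^\eps(t)$ defines an element of $H^1(\Omega)^*$ for a.e.\ $t$, with $\langle\dot\sigma^\eps(t),\Psi\rangle_{H^1(\Omega)}=(\dot\sigma^\eps(t),\Psi)_2$.)

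Concretely, I would fix $\Psi\in H^1(\Omega)$ and, for a.e.\ $t\in(0,T)$, use equation \eqref{eq:weakeqH}$_2$ together with the Cauchy--Schwarz inequality to obtain
\[
|\langle\dot\sigma^\eps(t),\Psi\rangle_{H^1(\Omega)}|\le \|\nabla\sigma^\eps(t)\|_2\|\nabla\Psi\|_2+\|\sigma^\eps(t)H(\varphi^\eps(t))\|_2\|\Psi\|_2\le C\bigl(1+\|\nabla\sigma^\eps(t)\|_2\bigr)\|\Psi\|_{H^1(\Omega)},
\]
whence $\|\dot\sigma^\eps(t)\|_{H^1(\Omega)^*}\le C\bigl(1+\|\nabla\sigma^\eps(t)\|_2\bigr)$ for a.e.\ $t\in(0,T)$. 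Squaring this inequality and integrating over $(0,T)$ then yields
\[
\|\dot\sigma^\eps\|_{L^2(0,T;H^1(\Omega)^*)}^2\le C\int_0^T\bigl(1+\|\nabla\sigma^\eps(\tau)\|_2^2\bigr)\d\tau\le C\bigl(1+\|\nabla\sigma^\eps\|_{L^2(0,T;L^2(\Omega))}^2\bigr),
\]
which is bounded uniformly in $\eps$ thanks to $(iv)$ in Corollary~\ref{cor:unifboundH}.

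There is no genuine obstacle in this argument: it is, if anything, lighter than the proof of Lemma~\ref{lemma:AL}. The only point worth emphasising is that the $a$-$priori$ $L^\infty$-bound on the nutrient, which is specific to Problem H (Theorem~\ref{thm:existenceH}) and is what makes the reaction term bounded in space--time, is precisely what allows the dual space to be taken as $H^1(\Omega)^*$ rather than $(W^{1,\alpha}(\Omega))^*$ with $\alpha>3$. This stronger bound on $\dot\sigma^\eps$ will be used, together with the Aubin--Lions lemma, to upgrade the weak compactness of $\sigma^\eps$ in $L^2(0,T;H^1(\Omega))$ to strong compactness, as stated in Theorem~\ref{thm:mainH}$(II)$.
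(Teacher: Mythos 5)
Your argument is correct and is essentially identical to the paper's proof: both test \eqref{eq:weakeqH}$_2$ against $\Psi\in H^1(\Omega)$, bound the reaction term via the pointwise bounds $0\le\sigma^\eps\le1$ and $0\le H\le1$, and conclude from $(iv)$ of Corollary~\ref{cor:unifboundH}. The only cosmetic difference is that the paper estimates the source term by $\|\Psi\|_1$ while you use $\|\sigma^\eps H(\varphi^\eps)\|_2\|\Psi\|_2$, which is immaterial.
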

	\begin{proof}
		By using the equation \eqref{eq:weakeqH}$_2$, and exploiting again the uniform bounds of $H$ and $\sigma^\eps$, for all $\psi\in H^1(\Omega)$ we infer
		\begin{equation*}
			|\langle\dot\sigma^\eps(t),\Psi\rangle_{H^{1}(\Omega)}|\le \|\nabla\sigma^\eps(t)\|_2\|\nabla \Psi\|_2+\|\Psi\|_{1}\le C(1+\|\nabla\sigma^\eps(t)\|_2)\|\Psi\|_{H^1(\Omega)},
		\end{equation*}
		and we conclude by $(iv)$ in Corollary~\ref{cor:unifboundH}.
	\end{proof}
	
	We now state, in a slightly different form useful for our purposes, the a-priori estimate for the chemical potential $\mu^\eps$.
	\begin{lemma}\label{lemma:boundmuH}
		If $T<1-c_0$, where $c_0$ is given by \eqref{eq:inmassbound}, then there exist $\eps_0\in (0,1)$ and a constant $C>0$, which blows up as $T$ goes to $1-c_0$, for which
		\begin{equation}\label{eq:boundmuH}
			|[\mu^\eps(t)]|\le C(E^\eps(t)+\|\nabla \mu^\eps(t)\|_2),\quad\text{for almost every }t\in (0,T)\text{ and for every }\eps\in (0,\eps_0).
		\end{equation}
		In particular, $\mu^\eps$ is uniformly bounded in $L^2(0,T;H^1(\Omega))$.
	\end{lemma}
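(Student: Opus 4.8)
The plan is to mimic the proof of Lemma~\ref{lemma:boundmu} almost verbatim, the only genuinely new point being how one secures the mass control \eqref{eq:boundmass} in the present setting. First I would observe that the derivation of the representation formula \eqref{eq:avmu} for $[\mu^\eps(t)]$ — which passes through the tensorial identity \eqref{eq:tensor} — uses exclusively the definition $\mu^\eps=-\eps\Delta\varphi^\eps+\frac 1\eps F'(\varphi^\eps)$ of the chemical potential and never the specific evolution equation for $\dot\varphi^\eps$. Hence \eqref{eq:tensor} and \eqref{eq:avmu} hold unchanged for Problem H, and so do the ensuing numerator and denominator estimates \eqref{eq:N} and \eqref{eq:D}, once the uniform bounds of Proposition~\ref{prop:unifboundP} invoked there are replaced by the analogous bounds $\|\varphi^\eps\|_{B([0,T];L^2(\Omega))}\le C$, $\|\varphi^\eps\|_{B([0,T];L^p(\Omega))}\le C$ and $\||\varphi^\eps(t)|-1\|_2\le C\sqrt\eps$ of Corollary~\ref{cor:unifboundH}. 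Thus everything reduces, exactly as in Lemma~\ref{lemma:boundmu}, to proving \eqref{eq:boundmass} with some $m_0<1$.

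To obtain the mass estimate I would invoke Remark~\ref{rmk:Lip}: for Problem H the map $t\mapsto[\varphi^\eps(t)]$ is nonincreasing and $1$-Lipschitz on $[0,T]$. Combining this with the initial bound \eqref{eq:inmassbound}, monotonicity gives $[\varphi^\eps(t)]\le[\varphi^\eps_0]\le c_0$, while the $1$-Lipschitz property gives $[\varphi^\eps(t)]\ge[\varphi^\eps_0]-t\ge-c_0-T$. Therefore $|[\varphi^\eps(t)]|\le c_0+T$ for all $t\in[0,T]$, and the hypothesis $T<1-c_0$ guarantees that $m_0:=c_0+T\in[0,1)$, so \eqref{eq:boundmass} holds with this $m_0$.

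With \eqref{eq:boundmass} in hand, I would run the remainder of the argument exactly as in Lemma~\ref{lemma:boundmu}: fix $\eta>0$ small, introduce the mollified phase $\varphi^\eps_\eta$ of \eqref{eq:mollified} and the auxiliary elliptic corrector $\psi^\eps_\eta(t)$, bound the numerator by $|N^\eps_\eta(t)|\le C\eta^{-\beta}(E^\eps(t)+\|\nabla\mu^\eps(t)\|_2)$ and the denominator by $D^\eps_\eta(t)\ge|\Omega|(1-m_0^2)-C(\sqrt\eta+\sqrt\eps)$, then choose $\eta$ small and set $\eps_0:=\eta$ to deduce \eqref{eq:boundmuH}, with a constant that degenerates as $m_0\to1$, i.e. as $T\to1-c_0$. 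The uniform bound of $\mu^\eps$ in $L^2(0,T;H^1(\Omega))$ then follows by applying the Poincar\'e-Wirtinger inequality to $\mu^\eps-[\mu^\eps]$ and using $(v)$ and $(i)$ of Corollary~\ref{cor:unifboundH} (equivalently, Lemma~\ref{lemma:enboundH}).

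I do not expect a serious obstacle. The only step that genuinely differs from Lemma~\ref{lemma:boundmu} is the mass control of the second paragraph: in Problem H there is no Lyapunov functional pinning $[\varphi^\eps]$ near its initial value, and one must instead exploit the one-sided and Lipschitz bounds on $\frac{\d}{\d t}[\varphi^\eps]$ from Remark~\ref{rmk:Lip}; this is precisely why the statement trades the smallness conditions \eqref{eq:ass1}--\eqref{eq:ass2} of Problem P for a smallness condition on the time horizon, $T<1-c_0$.
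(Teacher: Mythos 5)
Your proposal is correct and follows essentially the same route as the paper: the authors also reduce \eqref{eq:boundmuH} to Lemma~\ref{lemma:boundmu} after verifying the mass control \eqref{eq:boundmass}, which they obtain from Remark~\ref{rmk:Lip} via $|[\varphi^\eps(t)]|\le |[\varphi^\eps_0]|+t\le c_0+T=:m_0<1$, exactly as you do. Your additional observation that the representation formula \eqref{eq:avmu} relies only on the constitutive relation for $\mu^\eps$ (and on the uniform bounds of Corollary~\ref{cor:unifboundH}) rather than on the evolution equation is a correct and slightly more explicit justification of the step the paper leaves implicit.
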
 
	\begin{proof}
		Inequality \eqref{eq:boundmuH} is a byproduct of Lemma~\ref{lemma:boundmu} once we check the validity of \eqref{eq:boundmass}. This easily follows from Remark~\ref{rmk:Lip}, indeed by \eqref{eq:inmassbound} we have
		\begin{equation*}
			|[\varphi^\eps(t)]|\le |[\varphi^\eps_0]|+t\le c_0+T=:m_0<1.
		\end{equation*}
		Moreover, when $T$ goes to $1-c_0$ one has that $m_0$ goes to $1$, hence by Lemma~\ref{lemma:boundmu} one obtains that the constant $C$ blows up.
	\end{proof}

	\subsection{Compactness}
	Similarly to Proposition~\ref{prop:compactnessP}, we now deduce the following result.
		\begin{prop}\label{prop:compactnessH}
			There exist a (non relabelled) subsequence $\eps\to 0$, a measurable set $Q^C\subseteq Q$, a function $\sigma\in L^2(0,T;H^1(\Omega))$ with $0\le \sigma\le 1$ almost everywhere in $Q$, and Radon measures $\lambda, \lambda_{i,j}$ on $\overline{Q}$ for $i,j=1,\dots,d$ such that the following hold true:
			\begin{enumerate}
				\item $w^\eps\xrightarrow[\eps\to 0]{}2\theta \chi_{Q^C}$ in $C([0,T];L^1(\Omega))$, and $\chi_{Q^C}$ is in $C^\frac 18([0,T];L^1(\Omega))\cap B([0,T];BV(\Omega))$;
				\item $\varphi^\eps\xrightarrow[\eps\to 0]{}-1+2\chi_{Q^C}=:\varphi$ in $C([0,T];L^2(\Omega))$, and $\varphi^\eps(t)\xrightarrow[\eps\to 0]{}\varphi(t)$ in $L^q(\Omega)$ for all $q\in [1,p)$ and $t\in [0,T]$;
				\item $\sigma^\eps \xrightharpoonup[\eps\to 0]{}\sigma$ in $L^2(0,T;H^1(\Omega))$ and $\sigma^\eps \xrightarrow[\eps\to 0]{}\sigma$ in $L^q(Q)$ for all $q\ge 1$;
				\item $e^\eps(\varphi^\eps)\d t\d x\xrightharpoonup[\eps\to 0]{\ast}\lambda$ and $\eps\partial_i\varphi^\eps\partial_j\varphi^\eps\d t\d x\xrightharpoonup[\eps\to 0]{\ast}\lambda_{i,j}$ in the sense of measures in $\overline Q$.
			\end{enumerate}
			If in particular $T<1-c_0$, where $c_0$ is given by \eqref{eq:inmassbound}, then there also exist a bounded measurable function $E\colon [0,T]\to [0,+\infty)$ and a function $\mu\in L^2(0,T;H^1(\Omega))$ such that
			\begin{enumerate}[start=5]        
				\item $\mu^\eps \xrightharpoonup[\eps\to 0]{}\mu$ in $L^2(0,T;H^1(\Omega))$;
				\item $E^\eps(t)\xrightarrow[\eps\to 0]{}E(t)$ for almost every $t\in (0,T)$.
			\end{enumerate}
		\end{prop}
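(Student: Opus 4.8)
The plan is to follow the proof of Proposition~\ref{prop:compactnessP} almost verbatim, the only genuinely new point being the convergence of the energies in item $(6)$, which this time has to be handled with an extra decomposition because the energy balance \eqref{eq:EBH} contains a term of indefinite sign. Items $(1)$--$(4)$ are obtained exactly as the corresponding statements in Proposition~\ref{prop:compactnessP}, replacing throughout Proposition~\ref{prop:unifboundP} by Corollary~\ref{cor:unifboundH}: from the uniform Hölder estimates for $w^\eps$ and $\varphi^\eps$ established above, Ascoli--Arzel\`a yields $w^\eps\to w$ in $C([0,T];L^1(\Omega))$; setting $\varphi:=W^{-1}(w)$ and using \eqref{eq:W} and \eqref{eq:propW} one gets $\varphi^\eps\to\varphi$ in $C([0,T];L^2(\Omega))$, and $(ii)$ in Corollary~\ref{cor:unifboundH} forces $|\varphi|=1$ a.e.\ in $Q$, so that $\varphi=-1+2\chi_{Q^C}$ and $w=2\theta\chi_{Q^C}$, while the $C^{1/8}$-in-time and $B([0,T];BV(\Omega))$ regularity of $\chi_{Q^C}$ follows as before and the $L^q$-convergence with $q\in[1,p)$ from the $L^2$-convergence together with the uniform $L^p$-bound $(iii)$. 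For $(3)$, weak $L^2(0,T;H^1(\Omega))$-convergence comes from $(iv)$ in Corollary~\ref{cor:unifboundH}, strong $L^2(Q)$-convergence from the Aubin--Lions theorem using Lemma~\ref{lemma:sigmadotH} and the embeddings $H^1(\Omega)\hookrightarrow\hookrightarrow L^2(\Omega)\hookrightarrow H^1(\Omega)^*$, and this upgrades to strong $L^q(Q)$-convergence for every $q\ge1$ thanks to the uniform bound $0\le\sigma^\eps\le1$, which also passes to the limit. Finally $(4)$ is Banach--Alaoglu applied to $e^\eps(\varphi^\eps)$ and to $\eps\partial_i\varphi^\eps\partial_j\varphi^\eps$, both bounded in $L^1(Q)$ by $(i)$ in Corollary~\ref{cor:unifboundH} since $\eps|\nabla\varphi^\eps|^2\le 2e^\eps(\varphi^\eps)$.

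Assume now $T<1-c_0$. Then Lemma~\ref{lemma:boundmuH} gives a uniform bound for $\mu^\eps$ in $L^2(0,T;H^1(\Omega))$, whence, up to a further subsequence, $(5)$ holds for some $\mu\in L^2(0,T;H^1(\Omega))$. To prove $(6)$, observe that $t\mapsto E^\eps(t)+\frac12\|\sigma^\eps(t)\|_2^2$ is no longer monotone, so Helly's theorem cannot be applied to it directly. Instead set
\[
\mc E^\eps(t):=E^\eps(t)+\frac12\|\sigma^\eps(t)\|_2^2+\int_0^t\!\left(\|\nabla\mu^\eps(\tau)\|_2^2+\|\nabla\sigma^\eps(\tau)\|_2^2+\|\sqrt{H(\varphi^\eps(\tau))}\sigma^\eps(\tau)\|_2^2\right)\d\tau,
\]
so that by \eqref{eq:EBH} one has $\mc E^\eps(t)=\mc E^\eps(0)+\int_0^t(\mu^\eps(\tau),(\sigma^\eps(\tau)-1)H(\varphi^\eps(\tau)))_2\d\tau$ with $\mc E^\eps(0)\le\mc E_0$. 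The dissipation integral appearing in $\mc E^\eps$ is nondecreasing in $t$ and uniformly bounded by Corollary~\ref{cor:unifboundH}, so by Helly's theorem it converges pointwise on $[0,T]$ along a subsequence to a nondecreasing function; up to a further subsequence $\mc E^\eps(0)\to\ell\in[0,\mc E_0]$. Moreover $(\sigma^\eps-1)H(\varphi^\eps)\to(\sigma-1)H(\varphi)$ strongly in $L^2(Q)$, being the product of two uniformly $L^\infty$-bounded sequences, one converging in $L^2(Q)$ by $(3)$ and the other converging there by the Lipschitz continuity and boundedness of $H$ together with $(2)$; pairing this strong convergence with $\mu^\eps\rightharpoonup\mu$ in $L^2(Q)$ yields $\int_0^t(\mu^\eps(\tau),(\sigma^\eps(\tau)-1)H(\varphi^\eps(\tau)))_2\d\tau\to\int_0^t(\mu(\tau),(\sigma(\tau)-1)H(\varphi(\tau)))_2\d\tau$ for every $t\in[0,T]$. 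Consequently $\mc E^\eps(t)$ converges for every $t$, hence so does $E^\eps(t)+\frac12\|\sigma^\eps(t)\|_2^2$; since $(3)$ provides $\sigma^\eps(t)\to\sigma(t)$ in $L^2(\Omega)$ for a.e.\ $t$, we conclude that $E^\eps(t)$ converges for a.e.\ $t$ to a bounded measurable function $E$, which is precisely $(6)$.

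The main obstacle is item $(6)$: the indefinite-sign term in \eqref{eq:EBH} breaks the monotonicity that, in Proposition~\ref{prop:compactnessP}, made Helly's theorem directly applicable, forcing the decomposition above; it is also the reason why $(5)$ and $(6)$ are asserted only under the smallness condition $T<1-c_0$, which is exactly what makes Lemma~\ref{lemma:boundmuH}, and hence the control of $\mu^\eps$ needed to estimate that term, available. All remaining steps are routine adaptations of the arguments in Section~\ref{sec:sharpinterfaceP}.
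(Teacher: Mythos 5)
Your proof is correct and takes essentially the same route as the paper: items (1)--(4) by transplanting the Problem P arguments with Corollary~\ref{cor:unifboundH}, item (5) from Lemma~\ref{lemma:boundmuH} under $T<1-c_0$, and item (6) by combining Helly's theorem with the weak--strong pairing of $\mu^\eps$ against $(\sigma^\eps-1)H(\varphi^\eps)$. The only cosmetic difference is that the paper applies Helly directly to the monotone, uniformly bounded quantity $G^\eps(t)=E^\eps(t)+\tfrac12\|\sigma^\eps(t)\|_2^2-\int_0^t(\mu^\eps(\tau),(\sigma^\eps(\tau)-1)H(\varphi^\eps(\tau)))_2\d\tau$, whereas you apply it to the dissipation integral and track $\mc E^\eps(0)$ separately --- an equivalent rearrangement of the same energy balance.
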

	\begin{proof}
		The validity of $(1)-(5)$ follows by arguing as in Proposition~\ref{prop:compactnessP}, the only difference being the strong convergence in $L^q(Q)$ of the nutrient variable which can be obtained by Aubin-Lions theorem (exploiting Lemma~\ref{lemma:sigmadotH}) combined with the uniform bound $0\le \sigma^\eps\le 1$.
		
		As regards $(6)$ we first notice that by the energy balance \eqref{eq:EBH} the functions
		\begin{equation*}
			G^\eps(t):=E^\eps(t)+\frac 12 \|\sigma^\eps(t)\|_2^2-\int_0^t(\mu^\eps(\tau),(\sigma^\eps(\tau)-1)H(\varphi^\eps(\tau)))_2\d\tau,
		\end{equation*}
		are nondecreasing and by Lemmas~\ref{lemma:enboundH} and \ref{lemma:boundmuH} they also are uniformly bounded. Hence, by Helly's Selection Theorem they pointwise converge to a function $G$ in $[0,T]$. On the other hand $\sigma^\eps(t)\xrightarrow[\eps\to 0]{}{\sigma(t)}$ in $L^2(\Omega)$ for almost every $t\in (0,T)$, and since by Dominated Convergence Theorem there holds $(\sigma^\eps-1)H(\varphi^\eps)\xrightarrow[\eps\to 0]{}(\sigma-1)H(\varphi)$ in $L^2(Q)$ we also have
		\begin{equation*}
			\lim\limits_{\eps\to 0}\int_0^t(\mu^\eps(\tau),(\sigma^\eps(\tau)-1)H(\varphi^\eps(\tau)))_2\d\tau=\int_0^t(\mu(\tau),(\sigma(\tau)-1)H(\varphi(\tau)))_2\d\tau.
		\end{equation*}
		For almost every $t\in (0,T)$ this finally yields
		\begin{equation*}
			\lim\limits_{\eps\to 0}E^\eps(t)=G(t)-\frac 12 \|\sigma(t)\|_2^2+\int_0^t(\mu(\tau),(\sigma(\tau)-1)H(\varphi(\tau)))_2\d\tau=:E(t),
		\end{equation*} 
		and we conclude.
	\end{proof}
	
	\subsection{Proof of Theorem~\ref{thm:mainH}}
		We finally have all the ingredients for proving Theorem~\ref{thm:mainH}.
		
		Note that $(I)-(III)$ have been already obtained in Proposition~\ref{prop:compactnessH}. Let us now fix $\widetilde{T}<1-c_0$, so that $(IV)$ as well as $(a), (b)$ in Definition~\ref{def:varifoldsol} are fulfilled. In order to deduce $(d)$ we let $\eps\to 0$ in \eqref{eq:weakeqH}, which after integration by parts in time reads as:
		
		\begin{align*}
			\int_0^{\widetilde T}\bigg({-}(1+\varphi^\eps(\tau),\dot\Phi(\tau))_2&+(\nabla\mu^\eps(\tau),\nabla\Phi(\tau))_{2}-((\sigma^\eps(\tau){-}1)H(\varphi^\eps(\tau)),\Phi(\tau))_{2}\bigg)\d\tau\\
			&=(1+\varphi^\eps_0,\Phi(0))_2;
		\end{align*}
		\begin{align*}
			\int_0^{\widetilde T}\bigg({-}(\sigma^\eps(\tau),\dot\Psi(\tau))_{2}&+(\nabla\sigma^\eps(\tau),\nabla\Psi(\tau))_{2}+(\sigma^\eps(\tau)H(\varphi^\eps(\tau)),\Psi(\tau))_{2}\bigg)\d\tau=(\sigma^\eps_0,\Psi(0))_{2}.
		\end{align*}
		Notice indeed that the two terms involving $H$ strongly converge in $L^2((0,\widetilde{T})\times\Omega)$ due to Dominated Convergence Theorem.
		
		The construction of the varifold $V$ is done exactly as in the proof of Theorem~\ref{thm:main}, yielding $(V)$ up to checking the validity of $(f)$ in Definition~\ref{def:varifoldsol}. To this aim we exploit weak-lower semicontinuity and the energy balance \eqref{eq:EBH} to infer for almost every $0\le s\le t\le \widetilde{T}$ the following inequality
		\begin{align*}
			&\lambda^t(\overline{\Omega})+\frac 12 \|\sigma(t)\|^2_{2}+\int_s^t\left(\|\nabla\mu(\tau)\|^2_{2}+\|\nabla\sigma(\tau)\|^2_{2}+\|\sqrt{H(\varphi(\tau))}\sigma(\tau)\|^2_{2}\right)\d\tau\\
			\le&\liminf\limits_{\eps\to 0}\left(E^\eps(t){+}\frac 12 \|\sigma^\eps(t)\|^2_{2}{+}\int_s^t\left(\|\nabla\mu^\eps(\tau)\|^2_{2}{+}\|\nabla\sigma^\eps(\tau)\|^2_{2}{+}\|\sqrt{H(\varphi^\eps(\tau))}\sigma^\eps(\tau)\|^2_{2}\right)\d\tau\right)\\	
			=&\lim\limits_{\eps\to 0}\left(E^\eps(s)+\frac 12 \|\sigma^\eps(s)\|^2_{2}+ \int_s^t(\mu^\eps(\tau),(\sigma^\eps(\tau)-1)H(\varphi^\eps(\tau)))_2\d\tau\right)\\
			=&\lambda^s(\overline{\Omega})+\frac 12 \|\sigma(s)\|^2_{2} +\int_s^t(\mu(\tau),(\sigma(\tau)-1)H(\varphi(\tau)))_2\d\tau.
		\end{align*}
		
		We thus conclude if we prove that the critical time $T_{\rm cr}$ can be characterized as in \eqref{eq:TmaxH}. We first observe that by Remark~\ref{rmk:Lip} and by the convergence $\varphi^\eps\xrightarrow[\eps\to 0]{}\varphi$ in $C([0,T];L^2(\Omega))$ one deduces that the mass $[\varphi^\eps(\cdot)]$ uniformly converges to $[\varphi(\cdot)]$ in $[0,T]$. As a consequence the map $t\mapsto [\varphi(\cdot)]$ is nondecreasing, indeed $[\varphi^\eps(\cdot)]$ was nondecreasing.
		
		Now two cases are possible: either $[\varphi(1-c_0)]=-1$, whence $T_{\rm cr}=1-c_0$ and we conclude, or $|[\varphi(1-c_0)]|<1$. In this second case, by uniform convergence of the mass we deduce that
		\begin{equation*}
			|[\varphi^\eps(1-c_0)]|\le c_1<1,
		\end{equation*}
		definitively, and so conditions \eqref{eq:hypinitial} are satisfied at time $1-c_0$.
		
		Hence Lemma \ref{lemma:boundmuH} still holds true till a second time $T_2>1-c_0$, where one of the previous two alternatives must happen. Iterating this procedure finally leads to \eqref{eq:TmaxH}, and so we conclude.
	
	\bigskip
	
	\noindent\textbf{Acknowledgements.}
	The research performed here has been supported by the MIUR-PRIN Grant 2020F3NCPX 
	``Mathematics for industry 4.0 (Math4I4)''. 
	
	The present paper also benefits from the support of 
	the GNAMPA (Gruppo Nazionale per l'Analisi Matematica, la Probabilit\`a e le loro Applicazioni)
	of INdAM (Istituto Nazionale di Alta Matematica).
	
	E.R. also acknowledges the support of Next Generation EU Project No.P2022Z7ZAJ (A unitary mathematical framework for modelling muscular dystrophies).
	\bigskip

	{\small
		
		\vspace{15pt} (Filippo Riva) Universit\`{a} degli Studi di Pavia, Dipartimento di Matematica ``Felice Casorati'', \par
		\textsc{Via Ferrata, 5, 27100, Pavia, Italy}
		\par
		\textit{e-mail address}: \textsf{filippo.riva@unipv.it}
		\par
		\textit{Orcid}: \textsf{https://orcid.org/0000-0002-7855-1262}
		\par
		\vspace{15pt} (Elisabetta Rocca) Universit\`{a} degli Studi di Pavia, Dipartimento di Matematica ``Felice Casorati'' and\par IMATI-C.N.R.,\par
		\textsc{Via Ferrata, 5, 27100, Pavia, Italy}
		\par
		\textit{e-mail address}: \textsf{elisabetta.rocca@unipv.it}
		\par
		\textit{Orcid}: \textsf{https://orcid.org/0000-0002-9930-907X}

	}
	
\end{document}